\def\5n{\negthinspace \negthinspace \negthinspace \negthinspace \negthinspace }
\def\4n{\negthinspace \negthinspace \negthinspace \negthinspace }
\def\3n{\negthinspace \negthinspace \negthinspace }
\def\2n{\negthinspace \negthinspace }
\def\1n{\negthinspace }
 \def\sB{\mathscr{B}}  \def\cB{{\cal B}}
\def\dbE{\mathbb{E}}   \def\cE{{\cal E}}  
\def\dbF{\mathbb{F}} \def\sF{\mathscr{F}}  \def\cF{{\cal F}}  
   \def\cH{{\cal H}}
   \def\cK{{\cal K}}
\def\dbN{\mathbb{N}}     
\def\dbP{\mathbb{P}}     
   \def\cQ{{\cal Q}}  
\def\dbR{\mathbb{R}}     
   \def\cS{{\cal S}}
   \def\cV{{\cal V}}
\def\Om{\Omega}           \def\Th{\Theta}
\def\ss{\smallskip}                
\def\ms{\medskip}                
\def\ds{\displaystyle}           
\def\ra{\rightarrow}      
\def\no{\noindent}        \def\q{\quad}                      
\def\ns{\noalign{\ss}}    \def\qq{\qquad}                    
    \def\hb{\hbox}
  \def\deq{\triangleq}               
            \def\({\Big (}
                  \def\){\Big )}
\def\leq{\leqslant}       \def\geq{\geqslant}
\def\ges{\geqslant}       \def\esssup{\mathop{\rm esssup}}   \def\[{\Big[}
           \def\]{\Big]}
\def\h{\widehat}                   \def\cd{\cdot}
\def\wt{\widetilde}              \def\cds{\cdots}
        \def\ts{\times}                      \def\pa{\partial}
\def\a{\alpha}        \def\G{\Gamma}   \def\g{\gamma}   \def\O{\Omega}   \def\o{\omega}
\def\b{\beta}         \def\D{\Delta}   \def\d{\delta}   \def\F{\Phi}     
         \def\Th{\Theta}  \def\th{\theta}    \def\si{\sigma}
   \def\L{\Lambda}  \def\l{\lambda}        
    \def\t{\tau}     \def\f{\varphi}  \def\i{\infty}   
\def\bde{\begin{definition}\label}    \def\ede{\end{definition}}
\def\be{\begin{equation}}
\def\bel{\begin{equation}\label}      \def\ee{\end{equation}}
\def\bt{\begin{theorem}\label}        \def\et{\end{theorem}}
\def\bc{\begin{corollary}\label}      \def\ec{\end{corollary}}
\def\bl{\begin{lemma}\label}          \def\el{\end{lemma}}
\def\bp{\begin{proposition}\label}    \def\ep{\end{proposition}}
\def\bas{\begin{assumption}\label}    \def\eas{\end{assumption}}
\def\br{\begin{remark}\label}         \def\er{\end{remark}}
\def\bex{\begin{example}\label}       \def\ex{\end{example}}
\def\ba{\begin{array}}                \def\ea{\end{array}}
\def\ben{\begin{enumerate}}           \def\een{\end{enumerate}}
\newtheorem{theorem}{Theorem}[section]
\newtheorem{definition}[theorem]{Definition}
\newtheorem{proposition}[theorem]{Proposition}
\newtheorem{corollary}[theorem]{Corollary}
\newtheorem{lemma}[theorem]{Lemma}
\newtheorem{remark}[theorem]{Remark}
\newtheorem{example}[theorem]{Example}
\newtheorem{assumption}{Assumption}
\begin{document}

\title{\bf  A local maximum principle for robust optimal control problems of
quadratic  BSDEs\thanks{This paper is supported by National Key R\&D Program of China (Grant Nos.2022YFA1006101, 2018YFA0703900), National Natural Science Foundation of China (Grant Nos.12171279, 72171133, 12101291, 12371445), Natural Science Foundation of Shandong Province (Grant Nos.ZR2020MA032, ZR2022MA029), Natural Science Foundation of Guangdong Province of China (Grant No.2214050003543), and Science and Technology Commission of Shanghai Municipality (Grant No.22ZR1407600).}}

\author{Tao Hao\thanks{School of Statistics and Mathematics, Shandong University of Finance and Economics, Jinan 250014, China (Email: {\tt taohao@sdufe.edu.cn}).
}~,~~
%
Jiaqiang Wen\thanks{Department of Mathematics, Southern University of Science and Technology, Shenzhen 518055, China (Email: {\tt wenjq@sustech.edu.cn}).  
}~,~~
Qi Zhang\thanks{School of Mathematical Sciences, Fudan University, Shanghai 200433, China \&
Laboratory of Mathematics for Nonlinear Science, Fudan University, Shanghai 200433, China (Email:{\tt qzh@fudan.edu.cn}). 
}~,~~
}

\maketitle

\no\bf Abstract: \rm
 The paper concerns the necessary maximum principle  for robust optimal control problems of quadratic BSDEs.
 The coefficient of the systems depends on the parameter $\th$,
  and the generator of BSDEs is of quadratic growth in $z$. Since the model is uncertain, the variational inequality is proved by weak convergence technique.
 In addition, due to the generator being quadratic with respect to $z$,  the
forward adjoint equations  are SDEs  with unbounded coefficient involving
mean oscillation martingales.
Using   reverse H\"older inequality and   John-Nirenberg inequality, we show that its solutions  are  continuous  with respect to the parameter $\th$.
The necessary and sufficient conditions for robust optimal control are proved by   linearization method.
%
\ms

\no\bf Key words:\rm\  quadratic BSDE, model uncertainty, maximum principle, robust optimal control


\ms

\no\bf AMS subject classifications. \rm  93E20, 60H10, 35K15

\section{Introduction}\label{sec 1}

The stochastic maximum principle, namely, the necessary condition for optimality,
is an important approach  to study stochastic optimal control problems.
In 1990, Peng \cite{Peng-1990} first proved a global maximum principle for classical stochastic control systems.
From then on, this topic  has been extensively explored for various stochastic systems. We refer to
Hu and Tessitore \cite{Fuhrman-Hu-Tessitore-2013} for infinite-dimensional dynamics,
Buckdahn, Li and Ma \cite{Buckdahn-Li-Ma-2016} for mean-field control systems, Fuhrman,
Hu, Ji and Xue  \cite{Hu-Ji-Xue-2018} for fully coupled  forward-backward stochastic control systems,
to name but a few.
In addition, many works on   maximum principle for the stochastic recursive optimal control problems have been published. For example, Peng \cite{Peng-1993}
considered a local maximum principle in the case that the control domain is convex and  the diffusion coefficient depends on control.
Ji and Zhou \cite{Ji-Zhou-2006} investigated a local maximum principle for stochastic optimal control with terminal state constraints.
Xu \cite{Xu-1995} established a stochastic maximum principle for the nonconvex control domain in the case that the   diffusion coefficient
is independent of  control. Hu and Ji \cite{Hu-Ji-2016} studied stochastic maximum principle for stochastic recursive optimal control problems under volatility ambiguity. Hu \cite{Hu-2017} obtained a global stochastic maximum principle by introducing two new adjoint equations, and solved Peng's
open problem completely.


              In many scenarios,  we meet the situation that the model is uncertain and the generator of BSDEs is quadratic growth.
             Our motivation of the present paper is to study the control problem of such kind of model. We give two
examples for the concerned model which come from risk-sensitive control problems and optimal strategy problems for large investor in \autoref{sec111}.

Inspired by the  two examples in \autoref{sec111}, we consider the following robust optimal control problem with quadratic generator.
Let $\th\in \Th$ represent different market conditions, where $\Th$ is a locally compact Polish space with distance $d$.
Assume that $V$ is a given nonempty convex subset of $\dbR^k$, and consider the following forward-backward control system
           \begin{equation}\label{1.1}
             \left\{
              \begin{aligned}
                X^v_\th(t)= & x+\int_0^tb_\th(s,X^v_\th(s),\dbE[X^v_\th(s)],v(s))ds\\
                            &  +\int_0^t\si_\th(s,X^v_\th(s),v(s))dW(s),\ t\in[0,T],\\
                Y^v_\th(t)= & \F_\th(X^v_\th(T), \dbE[X^v_\th(T)])
                              +\int_t^Tf_\th(s,X^v_\th(s),\dbE[X^v_\th(s)], Y^v_\th(s),Z^v_\th(s), v(s) )ds\\
                            &- \int_t^T(Z_\th^v(s))^\top dW_s,\\
              \end{aligned}
             \right.
           \end{equation}
           where $b_\th:[0,T]\ts \dbR^n  \ts \dbR^n \ts V \ra \dbR^n,
           \si_\th:[0,T]  \ts  \dbR^{n}  \ts V \ra \dbR^{n\ts d},$
           $f_\th:[0,T] \ts \dbR^n \ts \dbR^n\ts \dbR  \ts \dbR^d  \ts V \ra \dbR, \
           \F_\th:\dbR^n \ts \dbR^n \ra \dbR$;
          $f_\th$ is quadratic growth with respect to $z$; $v(\cd)$ is a control process;
            the precise assumptions on $b_\th,  \si_\th,  f_\th, \F_\th$  refer to \autoref{ass 1}.
             In order to obtain a general
result, we assume that the forward stochastic differential equation (SDE, for short) is mean-field type, and consider
the following cost functional
\begin{equation*}\label{2.9}
           J(v(\cd))=\sup\limits_{Q\in \cQ}\int_{\Th}   \dbE\[  \phi_\th(X_\th^v(T))+\g_\th(Y^v_\th(0))\] Q(d\th).
           \end{equation*}
           The assumptions on  $\phi_\th$ and $\g_\th$ lie in \autoref{ass 11}.
           The robust optimal control problem with quadratic generator is to minimize the cost function $J(v(\cd))$ over $\cV_{ad}$
           (see \autoref{def 2.1}). The target of
           the present paper is to give the necessary and sufficient conditions of the optimal control.


Note that the solvability of BSDEs with quadratic generators has been intensively investigated over the past twenty years. In 2000, Kobylanski \cite{Kobylanski00} investigated the existence and uniqueness of one-dimensional BSDEs in the case that the generator $g$ is of quadratic growth in $z$ and the terminal value $\xi$ is bounded.
 Briand and Hu \cite{Briand-Hu-06,Briand-Hu-08} obtained the existence and uniqueness of one-dimensional  BSDEs  with quadratic growth and unbounded terminal value.
The multi-dimensional situation with bounded terminal value was studied by Hu and Tang \cite{Hu-Tang-2016} and Xing and Zitkovic \cite{Xing-Zitkovic};
and the multi-dimensional situation with unbounded terminal value was investigated by Fan, Hu and Tang \cite{Fan--Hu--Tang-20}.
Some other recent developments concerning the quadratic BSDEs can be found in
 Fan, Hu and Tang \cite{FHT}, Hu, Li and Wen \cite{Hu-Li-Wen-JDE2021}, Hao, Hu, Tang and Wen   \cite{Hao-Hu-Tang-Wen-2022}  and so on.
In addition, much progress has been made  on  stochastic optimal control problems involving quadratic BSDEs. Let us cite \cite{Lim-Zhou-2005} for its applications to the risk-sensitive control problems with the risk-sensitive parameter; and
\cite{Rouge-El Karoui-2000,Delbaen-Grandits-Rheinlander-Samperi-Schweizer-Stricker-2002, Hu-Imkeller-Muller-2005, Morlais-2009} for  the exponential utility maximization problems.



           Since the generator is quadratic and the model is uncertain, there are three essential difficulties in the
           present work.
           (i) The fact that $f$ is of quadratic growth in $z$ causes that
           the   variational equations  are linear BSDEs with stochastic unbounded Lipschitz coefficients
           involving BMO-martingales. Those results for BSDEs with  bounded Lipschitz coefficients (see e.g. \cite[Lemma 3.2]{Hu-Wang-2020})
           do not work in this setting.
           In order to obtain the maximum principle, we   prove a new estimate (\autoref{pro 3.2}), i.e.,
           for  $p\in(1\vee 2p_{\pa_z f_\th}^{-1},2)$,
         \begin{equation*}
             \begin{aligned}
                \lim\limits_{\l\ra 0}\sup\limits_{\th\in\Th}\dbE\bigg[
             \sup\limits_{0\leq t\leq T}
             \G_\th(t)|\delta Y^\l_\th(t)|^p
             +\(\int_0^{T}(\G_\th(t))^\frac{2}{p}|\delta Z^\l_\th(t)|^2dt\)^{\frac{p}{2}}\bigg]=0,
              \end{aligned}
              \end{equation*}
         where $\G_\th(t):=\cE\(\int_0^t\pa_{z}f_\th(s)^\top dW(s)\),\ t\in[0,T].$
           (ii) The forward adjoint equations are linear SDEs with stochastic unbounded coefficients. We
            obtain the wellposedness of this kinds of equations, which develops the work of  Gal'chuk \cite{Galchuk-1978}.
            The continuity of its solutions with respect to $\th$, i.e.,
            for $1<p<(p_{\pa_z f_\th}\wedge p_{(\pa_z f_{\tilde{\th}}-\pa_z f_{\th})})$,
                \begin{equation*}
                \begin{aligned}
                &    \lim_{\epsilon\ra 0}\sup_{d(\th, \tilde{\th})\leq\epsilon }\dbE\bigg[\sup_{t\in[0,T]}|p_\th(t)-p_{\tilde{\th}}(t)|^p\bigg]=0,
                 \end{aligned}
                \end{equation*}
             is proved by the theory of BMO-martingales (\autoref{le 4.1}),
            which is necessary to prove that $ \L_\th(\cd)$ defined by (\ref{5.4}) is $\cF$-progressively measurable.
            (iii) Since we consider the robust optimal control problems, i.e., the cost functional is a supremum over a set of probability
            measures, the classical convex variational method is not suitable for  this situation. We adopt the weak convergence technique to prove
            the variational inequality.
                %


           Compared with the existing literatures, our paper has the following contributions.
           First, our model is uncertain and the generator $f$ is of quadratic growth in $z$. In this case,  the robust optimal control problems rather than classical optimal control problems are considered.  We prove a new and nontrivial estimate (\autoref{pro 3.2})
           and develop  the work of \cite{Hu-Wang-2020} from the Lipschitz case to quadratic case.
          Second,  we give the  existence and uniqueness result of $L^p$-solution to linear SDEs with stochastic unbounded coefficients,
           and prove the continuity of its solutions with respect to $\th$ by reverse H\"{o}lder inequality and  John-Nirenberg inequality.
           In addition, the continuity of solution to
           linear BSDEs with stochastic unbounded coefficients with respect to  $\th$   is obtained.
           Third,
            since the model is uncertain, the necessary  maximum principe is proved by  linearization  technique and weak convergence method. Moreover, under certain convexity assumptions, with the help of Sion's minimax theorem, we prove the sufficient maximum principle.



           The paper is arranged as follows. In \autoref{sec 2}, we formulate the problem. Variational inequality is proved in \autoref{sec 3}.
           The \autoref{sec 4} is devoted to the necessary and sufficient maximum principles.


\section{Two examples}\label{sec111}

In this section, we present two examples which demonstrate the applications of robust optimal control problems of
quadratic  BSDEs.
\begin{example}(risk-sensitive control)
             Assume that a market is roughly divided into two states $A$ and $B$ (for example,  a share market is  divided into  a bull market or  a bear market) which usually leads to the different coefficients in two states. Let $\th=1$ and 2 denote the state $A$ and the state $B$, respectively, which can be characterized as  $\th\in\Th=\{1,2\}$.  Let
$\cQ=\{Q^\l: \l\in[0,1]\},$
where $Q^\l$ is the  probability  such that $Q^\l(\{1\})=\l,\q  Q^\l(\{2\})=1-\l.$
              For simplicity, we only consider the 1-dimensional case. Suppose that there exist $N$-individual agents in a system. The state process of  the $i$-th agent is described by
                   \begin{equation}\label{7.1111}
                    \left\{
                     \begin{aligned}
                    dX_\th^{i,v}(t) & =b_\th^i(t,X_\th^{i,v}(t), \frac{1}{N}\sum_{j=1}^N X_\th^{j,v}(t),v(t))dt
                                    +\si_\th^i(t,X_\th^{i,v}(t), v(t))dW^i(t),\\
                     X_\th^{i,v}(t)&=x_0,
                    \end{aligned}
                    \right.
                \end{equation}
                where $W^i,i=1,2,\cds,N$ are independent copies of a 1-dimensional standard Brownian motion $W$; $v(\cd)$ is the control process. By $\cV_{ad}$ we denote the set of admissible controls (see \autoref{def 2.1}).
                The robust objective functional of the $i$-th agent is
                     \begin{equation}\label{7.3}
                     \begin{aligned}
                  &J^i(v(\cd)) = \max\limits_{\l\in\{0,1\}}\bigg\{\l Y^{i,v}_1+(1-\l)Y^{i,v}_2\bigg\}
                                    =\sup_{Q^\l\in \cQ}\int_\Th Y^{i,v}_\th Q^\l(d\th),
                                       \end{aligned}
                \end{equation}
                where   $(Y_\th^{i,v}, Z_\th^{i,v} )$ is the solution of the following BSDE with quadratic generator  (see El Karoui and Hamaden\`{e} \cite{El Karoui-Hamadene-2003})
                 \begin{equation}\label{7.3}
                    \left\{
                     \begin{aligned}
                    dY_\th^{i,v}(t) & = -\frac{\kappa}{2}|Z_\th^{i,v}(t)|^2 dt
                                    +Z_\th^{i,v}(t)dW^i(t),\\
                     Y_\th^{i,v}(T)&=\F_\th^i(X_\th^{i,v}(T),\frac{1}{N}\sum_{j=1}^N X_\th^{j,v}(T) ).
                    \end{aligned}
                    \right.
                \end{equation}
                Note that  the parameter $\kappa$ is called risk-sensitive parameter. The  $i$-th agent wants to minimize her/his objective functional.
                The above  problem is regarded as  a risk-sensitive robust optimal control problem for particle systems.

                Now assume that  the game is symmetric. In other words, let
                $b^i_\th=b_\th, \si^i_\th=\si_\th, \F^i_\th=\F_\th.$  As $N\ra \i$ in
                (\ref{7.1111})-(\ref{7.3}), from  strong law of large number, we obtain the following FBSDE:
                   \begin{equation*}\label{7.5}
                    \left\{
                     \begin{aligned}
                    dX_\th^{v}(t) & =b_\th(t,X_\th^{v}(t), \dbE[X_\th^{v}(t)],v(t))dt
                                    +\si_\th(t,X_\th^{v}(t), v(t))dW(t),\\
                     dY_\th^{v}(t) & = -\frac{\kappa}{2}|Z_\th^{v}(t)|^2 dt
                                    +Z_\th^{v}(t)dW(t),\\
                     X_\th^{v}(t)&=x_0,\  Y_\th^{v}(T)=\F_\th(X_\th^{v}(T),\dbE[ X_\th^{v}(T)] ),
                    \end{aligned}
                    \right.
                \end{equation*}
                and, meanwhile, the cost functional becomes  $J(v(\cd))= \sup\limits_{Q^\l\in \cQ}\int_\Th Y_\th^{v}(0)Q^\l(d\th).$
                     Consequently, the control problem can be written as below.

        \textbf{Problem R}  Find an optimal control $\bar{v}(\cd)$ such that
         \begin{equation*}\label{7.6}
           J(\bar{v}(\cd))=\inf_{v(\cd)\in\cV_{ad}}J(v(\cd))=\inf_{v(\cd)\in\cV_{ad}}\sup_{Q^\l\in \cQ}\int_\Th Y_\th^{v}(0)Q^\l(d\th).
         \end{equation*}

\end{example}

\begin{example}(optimal strategy for large investor)
   Let $K\subset\dbR$ be  a closed convex cone.
               Assume  that there are $N$  investors in a market and
              the market consists of a bond and a stock whose prices denoted by $S_{j}(t), j=0,1$ are governed by the following ODE and SDE:
            \begin{equation*}
             \left\{
             \begin{aligned}
             dS_{0}(t)&=S_{0}(t)\[r(t)+f^i_{0}(t,x^{i,\pi}(t), \frac{1}{N}\sum_{j=1}^N x^{j,\pi}(t),\pi(t))\] dt,\\
             dS_{1}(t)&=S_{1}(t)\[(\mu(t)-f^i_{1}(t,x^{i,\pi}(t), \frac{1}{N}\sum_{j=1}^N x^{j,\pi}(t),\pi(t))    dt+\si dW(t)\],
            \end{aligned}
            \right.
             \end{equation*}
            where $(r,\mu) : \O \ts [0,T] \ra \dbR\ts \dbR$  are the interest rate and return rate, respectively; $\si$ is the volatility of
            stock; $(r, \mu)$ are $\{\cF_t\}_{t\geq0}$-adapted and uniformly bounded processes and $\si\neq 0$;
 $W$ is a 1-dimensional Brownian motion; for each $i=1,2,\cds,N$, $f^i_{\ell},\ell=0,1: [0,T]\ts\dbR^+\ts \dbR^+ \ts \dbR\ra\dbR$ are two given functions representing the effect of the strategies chosen by  the investors on the prices.

            The $i$-th investor intends to invest $\pi^{i}(t)$ in stock at time $t$ and the remaining asset in bond.  Under a self-financed portfolio, the wealth process of the $i$-th investor, starting with an initial wealth $x^i_{0}$, satisfies the following wealth equation:
          \begin{equation*}\label{1.2111}
          \left\{
           \begin{aligned}
            dx^{i,\pi}(t)&=\(r(t)x^{i,\pi}(t)+( x^{i,\pi}(t)-\pi^i(t)) f^i_{0}(t, x^{i,\pi}(t), \frac{1}{N}\sum_{j=1}^N x^{j,\pi}(t),\pi(t))   \\
            &\q \ +\pi^{i}(t)[\mu(t)-r(t)+ f^i_{1}(t, x^{i,\pi}(t), \frac{1}{N}\sum_{j=1}^N x^{j,\pi}(t),\pi(t))]   \)dt+\pi^{i}(t)\si dW(t),\q t\in[0,T],\\
            x^{i,\pi}(0)&=x^i_{0}.
            \end{aligned}
          \right.
         \end{equation*}
        For $i=1,2,\cds,N$, let $\g^i>0$ be  a given parameter and $C^i$ be the consumption at time T for the $i$-investor. Consider the following cost functional

         \begin{equation*}
         J^i(\pi)=\frac{1}{\g^i}\ln\dbE\[\exp[-\g^i(x^{i,\pi}(T)-C^i)]\].
         \end{equation*}

        A strategy $\pi$ taking values in $K$ is called admissible if for any $t\in[0,T]$, $\dbE\[\int_0^T|\pi(t)|^2dt\]< \i.$ By $\cK$ we denote the
         set of all admissible strategies.
         The $i$-th investor intends to minimize her/his cost functional over $\cK$.
         Next, let $N\ra\i$ and consider the asymptotic behavior of $N$ investors. For simplicity, we set $\pi=\pi^{i}, x_0=x_0^i, \g=\g^i,
         f_{0}=f^i_{0}, f_{1}=f^i_{1}$ and $C=C^i$. This
         optimal strategy problem can be summarized  as

         \textbf{Problem}  Find an optimal strategy $\bar{\pi}(\cd)$ such that
         \begin{equation*}
         J(\bar{\pi}(\cd))= \inf\limits_{\pi(\cd)\in\cK}\frac{1}{\g}\ln\dbE\[\exp[-\g(x^{\pi}(T)-C)]\],
         \end{equation*}
         subject to a mean-field SDE
                  \begin{equation*}\label{1.21112}
          \left\{
           \begin{aligned}
            dx^{\pi}(t)&=\(r(t)x^{\pi}(t)+( x^{\pi}(t)-\pi(t)) f_{0}(t,x^{\pi}(t), \dbE[ x^{\pi}(t)],\pi(t))   \\
            &\q\ \ +\pi(t)[\mu(t)-r(t)+ f_{1}(t, x^{\pi}(t), \dbE[ x^{\pi}(t)],\pi(t))]   \)dt+\pi(t)\si dW(t),\q t\in[0,T],\\
            x^{\pi}(0)&=x_0.
            \end{aligned}
          \right.
         \end{equation*}
         To illustrate  our problem in a concise way, set
         $$f_{0}(t,x^{\pi}(t), \dbE[ x^{\pi}(t)],\pi(t)):=\a(t),\q  f_{1}(t,x^{\pi}(t), \dbE[ x^{\pi}(t)],\pi(t)):=\b(t).$$ Here $\a(\cd)$ and $\b(\cd)$ are two bounded functions.
        In order to compute the forward value of a portfolio, let us consider a zero-coupon bond $\L$ with a maturity of $T$, which
         is a financial asset generating a cash-flow of $1$ at time $T$. In other words, there is an $\dbR$-valued progressively
          measurable process $\G$  such that the zero-coupon price $\L$ satisfies BSDE:
          \begin{equation*}\label{1.2112}
          \left\{
           \begin{aligned}
            d\L(t)&=\L(t)\[\(r(t)+\G(t)(\mu(t)-r(t)+\b(t)-\a(t))\)dt
            +\G(t)\si dW(t)\],\q t\in[0,T],\\
            \L(T)&=1.
            \end{aligned}
          \right.
         \end{equation*}
         Here, for simplicity, we assume that $\G$ is $\{\cF_t\}_{t\geq0}$-adapted and uniformly bounded for any $(t,\o)\in[0,T]\ts \O.$
         According to \cite[Theorem 2.1]{Rouge-El Karoui-2000}, one can see
         \begin{equation}\label{1.8}
         J(\bar{\pi})= \inf\limits_{\pi(\cd)\in\cK}\frac{1}{\g}\ln\dbE\[\exp[-\g(x^{\pi}(T)-C)]\]=-\frac{x_0}{\L(0)}+\sup_{v(\cd)\in\cV}Y^v(0),
         \end{equation}
         where $(Y^v, Z^v)$ satisfies BSDE
         \begin{equation}\label{1.9}
         \left\{
         \begin{aligned}
         dY^v(t)&=-\(-\frac{1}{2\g}|\delta(t)+\Pi_{\si^{-1}\tilde{K}}(-\delta(t)-
         \g Z^v(t))|^2-(\delta(t)+\Pi_{\si^{-1}\tilde{K}}(-\delta(t)-\g Z^v(t)))Z^v(t)\)  dt\\
         &\q
         +Z^v(t)dW(t),\ \ \  t\in[0,T],\\
         Y^v(T)&=C,
         \end{aligned}
         \right.
         \end{equation}
         where $\delta(t)=\si^{-1}(\mu(t)-r(t)+\b(t)-\a(t)-\si^2\G(t))$; $\widetilde{K}=\{x\in \dbR|\sup\limits_{\pi\in K}(-\pi x)<\i\}$;
         $\Pi_{\si^{-1}\tilde{K}}(u)$ is the projection of $u\in\dbR$ on  $\si^{-1}\tilde{K}$;
        $\cV:=\{v:\O\ts [0,T]\ra\si^{-1}\wt{K}|\ v\  \text{is}\ \{\cF_t\}_{t\geq0}\text{-adapted}\  \text{and}$\   $\text{bounded}.$
          Since $|\Pi_{\si^{-1}\tilde{K}}(u)|\leq |u|,$ for $\forall u\in \dbR$, there exists a constant $L>0$ such that the generator
          of (\ref{1.9}) is dominated by  $L(1+|Z^v(t)|^2)$.

         The cost functional (\ref{1.8}) tells us that  solving an optimal strategy problem is equivalent to solving an optimal control problem of  BSDE,
         whose generator is dominated by a quadratic generator.
         Following this line, a natural question is if $\g$ and $C$ depend on a parameter $\th$ and are continuous in $\th$, how to describe the equivalent
         optimal control problem,  where $\th\in\Th$ represents the different market states and  $\Th$  is a closed set of $\dbR$.
         Actually, in this case, we need to consider
         the following cost functional
         \begin{equation*}\label{5.12}
         \widehat{J}(v(\cd)):=\sup_{Q\in \cQ}\int_\Th Y^v_\th(0) Q(d\th),
         \end{equation*}
         where $\cQ$ is a weakly compact and convex set of probability measures on $(\Th, \cB(\Th))$;
           $(Y^v_\th, Z^v_\th)$ satisfies the following BSDE
         \begin{equation}\label{5.13}
         \left\{
         \begin{aligned}
         dY^v_\th(t)&=-\(-\frac{1}{2\g_\th}|\delta(t)+\Pi_{\si^{-1}\tilde{K}}(-\delta(t)-
         \g_\th Z^v(t))|^2-(\delta(t)+\Pi_{\si^{-1}\tilde{K}}(-\delta(t)-\g_\th Z^v(t)))Z^v(t) \)  dt\\
         &\q\ +Z^v_\th(t)dW(t),\ t\in[0,T],\\
         Y^v_\th(T)&=C_\th.
         \end{aligned}
         \right.
         \end{equation}
         Similar to (\ref{1.9}), the generator 
         of (\ref{5.13}) is dominated by $L(1+|Z^v_\th(t)|^2)$, where $L$ is a constant independent of $\th$.
         Consequently, the corresponding optimal control problem is formulated as follow.

                    \textbf{Problem S}  Find an optimal control  $\bar{v}(\cd)$ such that
         \begin{equation*}
         \widehat{J}(\bar{v}(\cd))=\sup_{v(\cd)\in\cV} \sup_{Q\in \cQ}\int_\Th Y^v_\th(0) Q(d\th).
         \end{equation*}
         \end{example}


\section{Problem formulation}\label{sec 2}

\subsection{Some properties for BMO-martingales}
             Denote by $\dbN$ the set of  all natural numbers and by $\dbR^+$ the set of positive real numbers, respectively.
             Let the superscript $\top$ denote the transpose of vectors or matrices.
             Let $M=(M_t,\sF_t)$ be a uniformly integrable martingale with $M_0=0$, and we set, for $p\geq1$,
$$\|M\|_{BMO_p(\dbP)}:=
\sup_\t\bigg\|\dbE_\t\Big[\( \langle M\rangle_{\i} -\langle M\rangle_{\t} \)^{\frac{p}{2}}\Big]^{\frac{1}{p}}\bigg\|_{\i},$$
where $\t $  is a stopping time on $[0,T]$.
The class $\big\{M: \|M\|_{BMO_p(\dbP)}<\i\big\}$ is denoted by $BMO_p(\dbP)$. Note that $BMO_p(\dbP)$ is a Banach space under the norm $\| \cd \|_{BMO_p(\dbP)}$. For simplicity,  $BMO_2(\dbP)$ is written as  $BMO$.

 Next, we list some properties for $BMO$-martingales. The reader can  refer to Kazamaki \cite{Kazamaki-1994} for more details.

$\bullet$ Denote by $\cE(M)$ the Dol\'{e}ans--Dade exponential of a continuous local martingale  $M$, i.e.,
  $\cE(M_t) = \exp\{M_t-\frac{1}{2} \langle M\rangle_t\}$,    for any $t \in  [0, T]$.
    If $M \in  BMO$,     then $\cE(M)$ is a uniformly integrable martingale.

$\bullet$  Let $\Psi$ be the monotonically decreasing function defined on $(1,\i)$ by
               \begin{equation*}
                \begin{aligned}
                \Psi(p)=\bigg(1+p^{-2}\ln \frac{2p-1}{2(p-1)} \bigg)^\frac{1}{2}-1
                 \end{aligned}
                 \end{equation*}
                and then $\Psi(+\i):=\lim_{x\ra+\i}\Psi(x)=0$. For $M\in BMO$, we can find a positive
                constant $p_M$ which satisfies $\Psi(p_M)=\|M\|_{BMO}$. In particular, set
                $p_M=+\i$ if $\|M\|_{BMO}=0$. Then $p_M$ is uniquely determined.

   $\bullet$    The reverse H\"older inequality:          If $p \in  (1, p_M)$, for any stopping time $\tau  \in  [0, T]$,
                \begin{equation*}
                \dbE\[(\cE(M_T))^p/(\cE(M_\t))^p|\cF_\t    \]\leq K(p,||M||_{BMO}),\  \text{a.s.},
                \end{equation*}
                where
                \begin{equation*}
                K(p, ||M||_{BMO})
                =2\bigg(1-\frac{2p-2}{2p-1}\exp\bigg\{p^2\[||M||^2_{BMO}+2||M||_{BMO}    \]    \bigg\}\bigg)^{-1}.
                \end{equation*}

                $\bullet$  The John-Nirenberg inequality: For $M\in BMO$, if $\theta\in(0, \|M\|^{-2}_{BMO}),$ for any stopping time $\tau  \in  [0, T]$,
                \begin{equation*}
                \dbE\[\exp\Big\{ \theta(\langle M\rangle_T- \langle M\rangle_{\t}   )     \Big\}    \Big|\cF_\t\]\leq
                \(1-\theta\|M\|_{BMO}^2\)^{-1},\  \text{a.s.}
                \end{equation*}

    $\bullet$                 By $p^*_M$  we denote the conjugate exponent of $p_M$, i.e.,
             $(p_M)^{-1}+(p^*_M)^{-1}=1$.

\ms

           For any $p\ges1$, $t\in[0,T]$ and a filtration $\dbF$, we introduce some useful spaces:
\begin{align*}
%
%
%
%
%
\ns\ds \cS_\dbF^p(t,T;\dbR^m)=\Big\{&\f:\Om\ts[t,T]\to\dbR^m\bigm|\f\hb{ is
$\dbF$-adapted, continuous, }\\
\ns\ds&\qq\qq\qq
\|\f\|_{\cS_\dbF^p(t,T)}\deq\Big\{\dbE\(\sup_{s\in[t,T]}|\f_s|^p\)\Big\}^{\frac{1}{p}}<\i\Big\},\\
\ns\ds \cS_\dbF^\infty(t,T;\dbR^m)=\Big\{&\f:\Om\ts[t,T]\to\dbR^m\bigm|\f\hb{ is
$\dbF$-adapted, continuous,  }\\
\ns\ds&\qq\qq\qq
\|\f\|_{\cS_\dbF^\infty(t,T)}\deq\esssup_{(s,\o)\in[t,T]\times\Om}|\f_s(\o)|<\i\Big\},\\
\ns\ds \cH_\dbF^{2,p}(t,T;\dbR^m)=\Big\{& \f:\Om\ts[t,T]\to\dbR^m\bigm|\f\hb{ is
$\dbF$-progressively measurable, }\\
\ns\ds&\qq\qq\qq
\|\f\|_{\cH^{2,p}_\dbF(t,T)}\deq\dbE\[\(\int^T_t|\f_s|^2ds\)^{p\over2}\]^{\frac{1}{p}}<\i\Big\}.
%
%
\end{align*}
%

Next, we formulate the optimal control problem. For this end, we first introduce the set of admissible controls.

\begin{definition}\label{def 2.1}\rm
          An  $\dbF$-adapted process $v$ taking values in $V\subset\mathbb{R}^K$ is called an admissible control, if for any $\ell>0$,
          $\dbE[\int_0^T|v(t)|^\ell dt ]<\i.$  By $\cV_{ad}$ we denote the set of all admissible controls.
\end{definition}

\ms

The following assumptions on   coefficients $b_\th,\si_\th, f_\th, \F_\th$ are forced.

         \bas{ass 1}\rm
            \begin{enumerate}[~~\,\rm (i)]
              \item  There exists a constant $C_0>0$ independent  of  $\th$ such that, for $t\in[0,T]$, $x_1, x_2, x'_1, x'_2\in \dbR^n, v, v'\in V$,
                     \begin{equation*}
                     \begin{aligned}
                     &|b_\th(t,0, 0, v)|+|\si_\th(t,0, v)|\leq C_0(1+|v|),\\
                     &|b_\th(t,x_1, x'_1, v)-b_\th(t,x_2,x'_2, v')|\leq C_0(|x_1-x_2|+|x'_1-x'_2|+|v-v'|),\\
                     &|\si_\th(t,x_1,v)-\si_\th(t,x_2, v')|\leq C_0(|x_1-x_2|+|v-v'|);\\
                     \end{aligned}
                     \end{equation*}
              $b_\th,\si_\th$ is continuously differentiable in $(x,x',v)$ and $(x,v)$, respectively;
                     $\pa_x b_\th, \pa_{x'}b_\th, \pa_v b_\th,$ $\pa_x \si_\th,   \pa_v \si_\th $ are Lipschitz continuous in $(x, x',v)$.

              \item  $\Phi_\th,\pa_x\Phi_\th,\pa_{x'}\Phi_{\th}$ are continuous in $(x,x')$ and bounded.
              \item  $f$ is continuously differentiable in $(x,x',y,z,v)$;
              $\pa_xf_\th,\pa_{x'}f_\th,\pa_yf_\th,\pa_zf_\th, \pa_vf_\th $ are Lipschitz continuous in $(x,x',y,z,v)$;
              there exists a constant $C_1>0$  independent  of  $\th$ such that, for $t\in[0,T]$, $x_1, x_2, x'_1, x'_2\in \dbR^n, y_1, y_2\in \dbR,  z_1, z_2\in \dbR^d, v\in V$,
              \begin{equation*}
              \begin{aligned}
              & |f_\th(t,x_1,x'_1,0,0,v)|\leq C_1,\\
              & |f_\th(t,x_1,x'_1, y_1,z_1, v)-f_\th(t,x_2,x'_2,y_2,z_2,v)|\\
              &\leq C_1(|x_1-x_2|+ |x'_1-x'_2|+|y_1-y_2|)
              +C_1(1+|z_1|+|z_2|) |z_1-z_2|,\\
              %
              %
              & |\pa_vf_\th(t,x_1,x'_1,y_1,z_1, v)|
               \leq C_1.
              \end{aligned}
              \end{equation*}
              \item  There exists a constant $C_2>0$  independent  of  $\th$  such that, for any $t\in[0,T]$, $\th,\bar{\th}\in \Th$, $x_1,x'_1\in \dbR^n, y_1\in \dbR,  z_1\in \dbR^d, v\in V$,
              \begin{equation*}
              |\f_{\th}(t,x_1,x'_1, y_1,z_1,v)-\f_{\bar{\th}}(t,x_1,x'_1,y_1,z_1,v)|\leq C_2d(\th, \bar{\th}),
              \end{equation*}
              where $\f_{\th}$ is $b_\th, \si_\th,f_\th, \F_\th$ and their derivatives w.r.t. their respective variables. 
              \item $\cQ$ is a weakly compact and convex set of probability measures on $(\Th, \cB(\Th))$.
            \end{enumerate}
         \eas

            According \cite[Proposition 3]{Briand-Hu-08} and \cite[Lemma 2.1]{Hu-Tang-2016}, we know  the existence and  uniqueness of
           $(Y^v_\th,Z^v_\th).$
          \begin{theorem}\label{th 2.3}\rm
          Under  \autoref{ass 1}, for any $v(\cd)\in \cV_{ad}$ and $p>1$, the equation (\ref{1.1}) has a unique solution
          $(X^v_\th, Y^v_\th, Z^v_\th)\in \cS^p_{\dbF}(0,T;\dbR^n) \ts \cS^\i_{\dbF}(0,T;\dbR) \ts \cH^{2,p}_\dbF(0,T;\dbR^d)$. Moreover, the
          following estimates hold:
          \begin{equation}\label{2.1-0}
          \begin{aligned}
           & \mathrm{(i)}\ \|X^v_\th \|^p_{\cS^p_{\dbF}(0,T)}\leq C\bigg\{|x|^p+\dbE\[\(\int_0^T|b_\th(s,0,0,v(s))|ds\)^p \]
          +\dbE\[\(\int_0^T|\si_\th(s,0,v(s))|^2ds\)^\frac{p}{2} \]\bigg\},\\
          & \mathrm{(ii)}\  \|Y^v_\th\|_{\cS_\dbF^\infty(0,T)}\leq  M_1  \q~ \text{and}\q~  \|Z^v_\th\cdot W\|_{BMO}\leq M_2,
          \end{aligned}
          \end{equation}
          where the constant $C$ depends on $C_0, p,T$ and the constants $M_1, M_2$ depends on $ C_0, C_1, T, \|\Phi\|_\i$.
          \end{theorem}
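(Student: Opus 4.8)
The forward equation in \eqref{1.1} does not depend on $(Y^v_\th,Z^v_\th)$, so the plan is to solve the mean-field SDE for $X^v_\th$ first and then treat the quadratic BSDE for $(Y^v_\th,Z^v_\th)$ with $X^v_\th$ frozen. For the forward part, \autoref{ass 1}(i) provides that $b_\th$ and $\si_\th$ are globally Lipschitz in $(x,x')$ and $x$ with a constant $C_0$ independent of $\th$, and grow linearly in $v$; since every $v(\cd)\in\cV_{ad}$ has finite moments of all orders, I would run a contraction-mapping argument in $\cS^p_\dbF(0,T;\dbR^n)$, where the mean term $\dbE[X^v_\th(\cd)]$ is controlled through $\dbE[|X^v_\th(\cd)|]$ and absorbed by Gronwall. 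The estimate (i) then follows by splitting $X^v_\th(t)=x+\int_0^tb_\th\,ds+\int_0^t\si_\th\,dW$, bounding $|b_\th(s,X^v_\th(s),\dbE[X^v_\th(s)],v(s))|\les|b_\th(s,0,0,v(s))|+C_0\big(|X^v_\th(s)|+\dbE[|X^v_\th(s)|]\big)$ and analogously for $\si_\th$, and combining the Burkholder--Davis--Gundy inequality with Gronwall's lemma; the resulting $C$ depends only on $C_0,p,T$.

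For the backward part, once $X^v_\th$ is fixed the driver $\tilde f(s,y,z):=f_\th(s,X^v_\th(s),\dbE[X^v_\th(s)],y,z,v(s))$ satisfies, by \autoref{ass 1}(iii), the quadratic growth bound $|\tilde f(s,y,z)|\les C_1(1+|y|+|z|+|z|^2)$, while by \autoref{ass 1}(ii) the terminal value $\F_\th(X^v_\th(T),\dbE[X^v_\th(T)])$ is bounded by $\|\F\|_\i$. Hence this is a scalar quadratic BSDE with bounded terminal value, and the existence and uniqueness of a bounded $Y^v_\th$ together with $Z^v_\th\in\cH^{2,p}_\dbF(0,T;\dbR^d)$ and $Z^v_\th\cd W\in BMO$ follow from \cite[Proposition 3]{Briand-Hu-08} and \cite[Lemma 2.1]{Hu-Tang-2016}. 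The substance of the statement is the two a priori bounds in (ii), which I would establish uniformly in $\th$ and $v$ --- the $\th$-uniformity being inherited directly from the $\th$-independence of $C_0,C_1,\|\F\|_\i$ in \autoref{ass 1}. The $L^\infty$ bound $\|Y^v_\th\|_{\cS^\infty_\dbF(0,T)}\les M_1$ I would obtain from the standard a priori estimate for scalar quadratic BSDEs, controlling $|Y^v_\th(t)|$ by $\|\F\|_\i$ plus a contribution from the growth constant, so that $M_1=M_1(C_0,C_1,T,\|\F\|_\i)$.

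The BMO bound is the crux and the main obstacle, since the naive application of It\^o to $|Y^v_\th|^2$ produces a coefficient in front of $\dbE_\t[\int_\t^T|Z^v_\th|^2ds]$ that cannot be absorbed. The device is an exponential transform: fix a constant $\b>0$ large enough that $\tfrac12\b^2-(C_1+\tfrac12)\b\ges\tfrac12$, and apply It\^o's formula to $e^{\b Y^v_\th(t)}$. Using $dY^v_\th=-\tilde f\,dt+Z^v_\th\,dW$, the drift of $e^{\b Y^v_\th}$ equals $e^{\b Y^v_\th}\big(-\b\tilde f+\tfrac12\b^2|Z^v_\th|^2\big)$, and the quadratic growth bound together with the choice of $\b$ turns this into the lower bound $e^{\b Y^v_\th}\big(\tfrac12|Z^v_\th|^2-\b C_1'\big)$ for a constant $C_1'=C_1'(C_1,M_1)$. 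Taking the conditional expectation over $[\t,T]$ for an arbitrary stopping time $\t$, using the martingale property of the stochastic integral and the two-sided bound $e^{-\b M_1}\les e^{\b Y^v_\th}\les e^{\b M_1}$ coming from the already-established $L^\infty$ estimate, I would isolate
\begin{equation*}
\dbE_\t\Big[\int_\t^T|Z^v_\th(s)|^2\,ds\Big]\les 2e^{2\b M_1}\big(1+\b C_1'T\big),
\end{equation*}
a bound uniform in $\t$, which is exactly $\|Z^v_\th\cd W\|_{BMO}^2\les M_2^2$ with $M_2=M_2(C_0,C_1,T,\|\F\|_\i)$. The delicate points are choosing $\b$ so that the two $|Z^v_\th|^2$-contributions combine with the correct sign, and tracking every constant to confirm it is free of $\th$ and $v$; the latter is guaranteed precisely because all bounds in \autoref{ass 1} are $\th$-uniform.
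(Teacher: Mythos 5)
Your proposal is correct and matches the paper's treatment: the paper gives no proof of this theorem beyond invoking \cite[Proposition 3]{Briand-Hu-08} and \cite[Lemma 2.1]{Hu-Tang-2016}, and your argument (contraction plus Gronwall and Burkholder--Davis--Gundy for the mean-field SDE, then the bounded-terminal-value quadratic BSDE theory with the exponential transform $e^{\b Y}$ for the $BMO$ bound) is exactly the standard machinery those references encapsulate. The $\th$- and $v$-uniformity of $M_1,M_2$ is tracked correctly, since all constants in \autoref{ass 1} are independent of $\th$.
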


 In the rest of this paper we use $C$ to represent a generic constant which only depends on given parameters and could be different from line to line.

       The continuity of $(X_\th^v,Y_\th^v,Z_\th^v)$ with respect to $\th$ is proved in the following proposition.
\begin{proposition}\label{pro 2.3}\rm
Under  \autoref{ass 1}, for any $v(\cd)\in \cV_{ad}$, $p>1$ and $q>p_{\pa_z f_\th}^{*}$,

              \begin{equation*}
              \begin{aligned}
              & \mathrm{(i)}\q \lim_{\epsilon\ra0}\sup_{d(\th,\tilde{\th})\leq\epsilon}
              \dbE\bigg[\sup_{0 \leq t\leq T} |X_\th^v(t)-X_{\tilde{\th}}^v(t)|^p\bigg]=0,\\
              & \mathrm{(ii)}\q \lim_{\epsilon\ra0}\sup_{d(\th,\tilde{\th})\leq\epsilon}
              \dbE\bigg[\sup_{0 \leq t\leq T} |Y_\th^v(t)-Y_{\tilde{\th}}^v(t)|^q
              +\(\int_0^T|Z_\th^v(t)-Z_{\tilde{\th}}^v(t)|^2dt\)^{\frac{q}{2}} \bigg]=0.
              \end{aligned}
             \end{equation*}
\end{proposition}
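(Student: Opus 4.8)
The plan is to split the claim into the forward estimate (i) and the backward estimate (ii), the latter requiring a Girsanov change of measure to cope with the quadratic growth of $f_\th$ in $z$. For (i), I write the SDE solved by $\delta X:=X^v_\th-X^v_{\tilde\th}$ and, in each coefficient difference, insert and subtract the term evaluated at the $\th$-coefficient but the $\tilde\th$-state. By \autoref{ass 1}(i) the resulting state part is Lipschitz in $\delta X$ and $\dbE[\delta X]$, while by \autoref{ass 1}(iv) the parameter part is bounded by $C_2\,d(\th,\tilde\th)$. The Burkholder--Davis--Gundy and Gronwall inequalities then give $\dbE[\sup_{0\leq t\leq T}|\delta X(t)|^p]\leq C\,d(\th,\tilde\th)^p$; taking the supremum over $d(\th,\tilde\th)\leq\epsilon$ and letting $\epsilon\to0$ proves (i).

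For (ii), I consider the BSDE solved by $(\delta Y,\delta Z):=(Y^v_\th-Y^v_{\tilde\th},Z^v_\th-Z^v_{\tilde\th})$ and linearize $f_\th$ along the segment joining the two solutions. Since $f_\th$ is $C^1$ with Lipschitz derivatives (\autoref{ass 1}(iii)), the difference of generators equals $\alpha_s\delta X+\beta_s\dbE[\delta X]+\gamma_s\delta Y+\zeta_s^\top\delta Z+r_\th(s)$, where $\alpha,\beta,\gamma$ are bounded by $C_1$, the remainder satisfies $|r_\th(s)|\leq C_2\,d(\th,\tilde\th)$ by \autoref{ass 1}(iv), and $\zeta_s=\int_0^1\pa_z f_\th(\cdots)\,d\lambda$ obeys $|\zeta_s|\leq C(1+|Z^v_\th|+|Z^v_{\tilde\th}|)$. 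The terminal datum is $\delta\F$ with $|\delta\F|\leq C(|\delta X(T)|+|\dbE[\delta X(T)]|)+C_2\,d(\th,\tilde\th)$. By \autoref{th 2.3}(ii) both $Z^v_\th\cdot W$ and $Z^v_{\tilde\th}\cdot W$ belong to $BMO$ with a bound uniform in $\th$, hence so does $\zeta\cdot W$, with $BMO$-norm bounded uniformly in $\th$; since $\Psi$ is decreasing, the corresponding exponent---playing the role of $p_{\pa_z f_\th}$ in the statement---is bounded below uniformly in $\th$, which keeps the constants below independent of $\th$.

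Next I introduce $\wt\dbP$ with density $\cE\big(\int_0^\cdot\zeta_s^\top dW_s\big)_T$, which is a probability measure because $\zeta\cdot W\in BMO$. Under $\wt\dbP$ the process $\wt W=W-\int_0^\cdot\zeta_s\,ds$ is a Brownian motion and the term $\zeta^\top\delta Z$ is absorbed, so the equation becomes linear with only the bounded coefficient $\gamma$; representing its solution yields the pointwise bound $|\delta Y(t)|\leq C\,\dbE^{\wt\dbP}[\Xi\,|\,\cF_t]$ with $\Xi:=|\delta\F|+\int_0^T(|\delta X|+|\dbE[\delta X]|+|r_\th|)\,ds$. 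To return to $\dbP$ I combine Doob's maximal inequality with Hölder's inequality applied to the density, whose $q^*$-th moment is controlled by the reverse Hölder inequality; this is precisely where the hypothesis $q>p^*_{\pa_z f_\th}$ (equivalently $q^*<p_{\pa_z f_\th}$) is needed, and it gives $\dbE[\sup_t|\delta Y(t)|^q]\leq C\,\dbE[\Xi^q]$. For the $\delta Z$ part I apply It\^o's formula to $|\delta Y|^2$, absorb the cross term $2\delta Y\,\zeta^\top\delta Z$ by Young's inequality at the cost of $|\delta Y|^2|\zeta_s|^2$, and control the moments of $\int_0^T|\zeta_s|^2\,ds=\langle\zeta\cdot W\rangle_T$ by the John--Nirenberg inequality; taking $\tfrac q2$-th powers and using Burkholder--Davis--Gundy together with the $\delta Y$ bound just obtained closes the estimate. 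Since (i) gives $\dbE[\Xi^q]\to0$ uniformly over $d(\th,\tilde\th)\leq\epsilon$, part (ii) follows.

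The main obstacle I expect is the interplay between the change of measure and the $L^q$ bounds: the Girsanov density is only $L^{q^*}$-integrable for $q^*<p_{\pa_z f_\th}$, so $q$ cannot be arbitrary, and every constant must be shown uniform in $\th$ through the uniform $BMO$ bound of \autoref{th 2.3}. The most delicate computation is the $\delta Z$ estimate, where the quadratic coefficient reappears and must be tamed by John--Nirenberg before any H\"older step can be applied.
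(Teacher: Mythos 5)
Your proof is correct, and part (i) coincides with the paper's argument: the same insert-and-subtract decomposition of the coefficient differences, with Assumption \ref{ass 1}(iv) controlling the parameter part by $C_2\,d(\th,\tilde\th)$ and the standard SDE estimate of \autoref{th 2.3} plus Burkholder--Davis--Gundy/Gronwall giving $\dbE[\sup_t|\D X(t)|^p]\le C\,d(\th,\tilde\th)^p$. For part (ii) you take a genuinely more self-contained route. The paper writes the same linearized BSDE for $(\D Y,\D Z)$ but then invokes, as a black box, the a priori $L^q$-estimate of \cite[Corollary 9]{Briand-Confortola-2008} for BSDEs with stochastic Lipschitz coefficient whose $z$-coefficient generates a BMO martingale; this is where the exponent restriction $q>p^*_{\pa_zf_\th}$ enters, and the conclusion follows by plugging in (i) (the paper uses the exponent $q+1$ inside and the power $q/(q+1)$ outside as a H\"older device, whereas you land directly on $\dbE[\Xi^q]$). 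You instead reprove that estimate from scratch: Girsanov with density $\cE(\zeta\cdot W)_T$ to absorb the unbounded $z$-coefficient, the representation $|\D Y(t)|\le C\,\dbE^{\wt\dbP}[\Xi\,|\,\cF_t]$, then conditional reverse H\"older on the density plus Doob's inequality to return to $\dbP$ (which is exactly where $q^*<p$ is forced), and It\^o on $|\D Y|^2$ with John--Nirenberg to recover the $\D Z$ part. The two arguments are mathematically the same estimate; yours has the advantage of making explicit both the origin of the exponent condition and the uniformity of all constants in $\th$ via the uniform BMO bound $\|Z^v_\th\cdot W\|_{BMO}\le M_2$ of \autoref{th 2.3}, a point the paper leaves implicit, at the cost of about a page of computation that the citation replaces. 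The only cosmetic caveat is that the exponent you actually produce is the reverse H\"older exponent of $\zeta\cdot W$ (the averaged derivative along the segment) rather than of $\pa_zf_\th\cdot W$ itself, but you flag this and it is the same looseness present in the paper's own statement.
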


          \begin{proof}
          As for $\mathrm{(i)}$,
         denote $\D X(s)=X_\th^v(s)-X_{\tilde{\th}}^v(s),$ and  for $h=b,\si,$
         \begin{equation*}
         \D h(s):=h_\th(s,X_\th^v(s), \dbE[X_\th^v(s)],v(s))-h_{\tilde{\th}}(s,X_\th^v(s), \dbE[X_\th^v(s)],v(s)).
         \end{equation*}
         Then it follows that
         \begin{equation*}
         \begin{aligned}
         \D X(t)&=\int_0^t \(\D b(s)+b_{\tilde{\th}}(s, X_{\tilde{\th}}^v(s)+\D X(s),\dbE[X_{\tilde{\th}}^v(s)]+\dbE[\D X(s)],v(s))\\
                &\q \ \ \ \ \ \ \ -b_{\tilde{\th}}(s, X_{\tilde{\th}}^v(s),\dbE[X_{\tilde{\th}}^v(s)],v(s))\)ds\\
                &\q+\int_0^t \(\D \si(s)+\si_{\tilde{\th}}(s, X_{\tilde{\th}}^v(s)+\D X(s),v(s)) -\si_{\tilde{\th}}(s, X_{\tilde{\th}}^v(s),v(s))\)dW(s).
         \end{aligned}
         \end{equation*}
         Thanks to \autoref{th 2.3}, it yields, for $p>1$,
           \begin{equation*}
         \begin{aligned}
         \dbE\[\sup\limits_{t\in[0,T]} |\D X(t)|^p\]\leq C \dbE\[\(\int_0^T |\D b(s)|ds\)^p+\(\int_0^T |\D \si(s)|^2ds\)^\frac{p}{2}\]
         \leq C d(\th, \tilde{\th})^p.
         \end{aligned}
         \end{equation*}

         Then we turn to $\mathrm{(ii)}$.  For simplicity, we denote
          \begin{equation*}
         \begin{aligned}
          \D Y(s)&=Y_\th^v(s)-Y_{\tilde{\th}}^v(s),\q \D Z(s)=Z_\th^v(s)-Z_{\tilde{\th}}^v(s),\\
          \D  \Phi(T)&=\Phi_\th(X_{\tilde{\th}}^v(T), \dbE[X_{\tilde{\th}}^v(T)])- \Phi_{\tilde{\th}}(X_{\tilde{\th}}^v(T), \dbE[X_{\tilde{\th}}^v(T)]),\\
          \D f(s)&=f_\th(s, X_{\tilde{\th}}^v(s),\dbE[ X_{\tilde{\th}}^v(s)],Y_{\tilde{\th}}^v(s),Z_{\tilde{\th}}^v(s),v(s))-f_{\tilde{\th}}(s, X_{\tilde{\th}}^v(s),\dbE[ X_{\tilde{\th}}^v(s)],Y_{\tilde{\th}}^v(s),Z_{\tilde{\th}}^v(s),v(s)).
         \end{aligned}
         \end{equation*}

         Then
         \begin{equation*}
         \begin{aligned}
          \D Y(t)&=\D  \Phi(T)+\Phi_\th(X_\th^v(T),\dbE[X_\th^v(T)])-\Phi_\th(X_{\tilde{\th}}^v(T),\dbE[X_{\tilde{\th}}^v(T)])\\
                  &\q+\int_t^T\(\Delta f(s)
                  + f_{\th}(s, X_\th^v(s),\dbE[ X_\th^v(s)],Y_{\tilde{\th}}^v(s)+\D Y(s),Z_{\tilde{\th}}^v(s)+\D Z(s),v(s))\\
                  &\q\ \ \ \ \ \ \ \ \ \ - f_{\th}(s, X_{\tilde{\th}}^v(s),\dbE[ X_{\tilde{\th}}^v(s)],Y_{\tilde{\th}}^v(s),Z_{\tilde{\th}}^v(s),v(s))\)ds\\
                  &\q -\int_t^T \D Z(s)dW(s), t\in[0,T].
         \end{aligned}
         \end{equation*}

         According to \cite[Corollary 9]{Briand-Confortola-2008}, we know, for any $q>p_{\pa_z f_\th}^{*}$, 
           \begin{equation*}
         \begin{aligned}
         & \dbE\[\sup\limits_{t\in[0,T]}|\D Y(t)|^q+\(\int_0^T|\D Z(t)|^2dt\)^\frac{q}{2} \]\\
&\leq C\bigg(  \dbE\[|\D \Phi(T)|^{q+1}+|\D X(T)|^{q+1}+ |\dbE[\D X(T)]|^{q+1}  \]\\
         &\q\ +\dbE\[\int_0^T\(|\D f(t)|+|f_{\th}(t, X_\th^v(t),\dbE[ X_\th^v(t)],Y_{\tilde{\th}}^v(t),Z_{\tilde{\th}}^v(t),v(t))\\
                  &\q\ \ \ \ \ \ \ \ \ \ \ \ \ \ \ - f_{\th}(t, X_{\tilde{\th}}^v(t),\dbE[ X_{\tilde{\th}}^v(t)],Y_{\tilde{\th}}^v(t),Z_{\tilde{\th}}^v(t),v(t))|\)^{q+1}dt\]
         \bigg)^ \frac{q}{q+1}.
          \end{aligned}
         \end{equation*}
         Hence, from (i), we have, for  $q>p_{\pa_z f_\th}^{*}$,
          \begin{equation*}
         \begin{aligned}
           \dbE\[\sup\limits_{t\in[0,T]}|\D Y(t)|^q+\(\int_0^T|\D Z(t)|^2dt\)^\frac{q}{2} \] \leq Cd(\th,\tilde{\th})^q.
          \end{aligned}
         \end{equation*}
\end{proof}

 \ms

          In order to introduce the cost functional, we need the following assumption for the mappings $\phi_\th:\dbR^n\ra \dbR$ and $\g_\th:\dbR\ra \dbR$.

         \bas{ass 11}\rm
            \begin{enumerate}[~~\,\rm (i)]
              \item $\phi_\theta$ and $\g_\th$ are continuously differentiable in their respective variables and bounded.
              \item There exists a constant $L_1>0$  such that, for $x,x'\in \dbR$, $\th\in \Th$,
              \begin{align}\nonumber
               \begin{aligned}
               |\phi_\th(x)-\phi_\th(x')|\leq L_1(1+|x|+|x'|)|x-x'|,\q  |\pa_x\phi_\th(x)-\pa_x\phi_\th(x')|\leq L_1|x-x'|.
               \end{aligned}
              \end{align}
              \item $\g_\th$ and its derivative $\pa_y \g_\th$ are Lipschitz continuous with respect to $y$ uniformly in $\th$ and bounded.
               \item   There exists a constant $L_2>0$  such that, for $x\in \dbR^n$, $\th,\th'\in \Th$,
               \begin{align}\nonumber
                |h_\th(x)-h_{\th'}(x)| \leq L_2d(\th,\th'),
              \end{align}
              where $h$ denotes $\phi, \g$ and their derivatives w.r.t. their respective variables.
            \end{enumerate}
         \eas

\ms

           Since  the control system (\ref{1.1}) is model uncertainty, we consider the robust cost functional
           \begin{equation}\label{2.9}
           J(v(\cd))=\sup\limits_{Q\in \cQ}\int_{\Th}   \dbE\[  \phi_\th(X_\th^v(T))+\g_\th(Y^v_\th(0))\] Q(d\th).
           \end{equation}

           The optimal control problem is

           \textbf{Problem}\ \textbf{(PQU)} Find an optimal control $\bar{v}(\cd)$ such that
           \begin{equation}\label{2.10}
           J(\bar {v}(\cd))=\inf_{v(\cd)\in \cV_{ad}} J(v(\cd)),
           \end{equation}
           subject to (\ref{1.1}) and (\ref{2.9}).

The above control problem is what we concern in the rest of the paper, for which we will give the necessary and sufficient conditions of the optimal control. 

\ms

           \section{Variational inequality}\label{sec 3}

             In this section, we investigate the variational equations and variational inequality, which are essential materials to study the
              stochastic maximum principle.

               Let $\bar {v}(\cd)$  be an optimal control and  $(\bar{X}_\th,\bar{Y}_\th,\bar{Z}_\th)$  be the corresponding state
            processes of (\ref{1.1}). Notice that $\cV_{ad}$ is convex, and hence for any $v(\cd)\in \cV_{ad}$ and $0<\lambda<1$,
            $v^\l(\cd):=\bar{v}(\cd)+\l(v(\cd)-\bar{v}(\cd))\in \cV_{ad}$. By $(X_\th^\l,Y_\th^\l,Z_\th^\l)$ we denote the
            corresponding trajectories of  (\ref{1.1}) with  $v^\l(\cd)$ for each $\th\in \Th$.
            To avoid heavy notation, set
            \begin{equation*}
            \begin{aligned}
            b_\th(t)&=b_\th(t, \bar{X}_\th(t), \dbE[\bar{X}_\th(t)],\bar{v}(t)),\\
            f_\th(t)&=f_\th(t, \bar{X}_\th(t),\dbE[\bar{X}_\th(t)], \bar{Y}_\th(t),\bar{Z}_\th(t), \bar{v}(t)),
            \end{aligned}
            \end{equation*}
            $\si_\th(t), \partial_xb_\th(t),\partial_{x'}b_\th(t), \partial_{v}b_\th(t), \partial_x\si_\th(t),\partial_{x'}\si_\th(t), \partial_{v}\si_\th(t), \partial_xf_\th(t),\partial_{x'}f_\th(t),  \partial_yf_\th(t),\partial_{z}f_\th(t), \partial_{v}f_\th(t)$ can be set similarly.

              \subsection{Variational equation}

            Consider  the following   variational SDE on $[0,T]$: for each $\th\in\Th$,
             \begin{equation}\label{3.1}
             \left\{
             \begin{aligned}
             d X^1_\th(t)  &   =\(\pa_xb_\th(t)X^1_\th(t)+\pa_{x'}b_\th(t)  \dbE[X^1_\th(t)]+\pa_v b_\th(t)(v(t)-\bar{v}(t))\)dt\\
                           & \q  +\sum_{i=1}^d\(\pa_x\si^i_\th(t)X^1_\th(t)
                                   +\pa_v \si^i_\th(t)(v(t)-\bar{v}(t))\)dW^i(t),\\
               X^1_\th(0)  &=0.
             \end{aligned}
             \right.
             \end{equation}
             The equation (\ref{3.1}) is a linear mean-field SDE. Since the coefficients
             $\pa_xb_\th(t),\pa_{x'}b_\th(t),\pa_u b_\th(t), $ $\pa_x\si_\th(t),\pa_{x'}\si_\th(t),\pa_u \si_\th(t)  $ are bounded,
             according to \autoref{th 2.3},  SDE (\ref{3.1}) has a unique solution  $X^1_\th\in\cS^p_{\dbF}(0,T;\dbR^n)$ for $p>1.$
             Furthermore, we have, for $p>1$,
             \begin{equation}\label{3.2}
             \dbE\bigg[ \sup_{0\leq t\leq T} |X_\th^1(t)|^p   \bigg]
             \leq C\bigg\{\dbE\Big[\int_0^T(|v(t)|^p+|\bar{v}(t)|^p)dt    \Big]
             +\Big(\dbE\int_0^T|v(t)|^2dt  \Big)^\frac{p}{2}
             +\Big(\dbE\int_0^T|\bar{v}(t)|^2dt  \Big)^\frac{p}{2}
             \bigg\}.
             \end{equation}

             The following lemma shows that $X^1_\th$ is continuous in $\th$.

             \begin{proposition}\label{pro 3.1-1}\rm
             Under \autoref{ass 1}, we have for $p>1$,
              \begin{equation*}
              \lim_{\epsilon\ra0}\sup_{d(\th,\tilde{\th})\leq\epsilon}
              \dbE\bigg[\sup_{0 \leq t\leq T} |X_\th^1(t)-X_{\tilde{\th}}^1(t)|^p\bigg]=0.
             \end{equation*}
             \end{proposition}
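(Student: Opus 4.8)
The plan is to derive the SDE governing the difference $\D X^1(t):=X^1_\th(t)-X^1_{\tilde{\th}}(t)$, isolate a linear part with coefficients that are bounded uniformly in the parameter from a remainder that carries the entire $\th$-dependence, and then combine the linear $L^p$-estimate of \autoref{th 2.3} with H\"older's inequality and the continuity statement \autoref{pro 2.3}(i). Subtracting equation (\ref{3.1}) for $\tilde{\th}$ from the one for $\th$, I would write the result as
\begin{equation*}
\begin{aligned}
d\,\D X^1(t)&=\big(\pa_xb_{\tilde{\th}}(t)\D X^1(t)+\pa_{x'}b_{\tilde{\th}}(t)\dbE[\D X^1(t)]+I_b(t)\big)dt\\
&\q+\sum_{i=1}^d\big(\pa_x\si^i_{\tilde{\th}}(t)\D X^1(t)+I^i_\si(t)\big)dW^i(t),\q \D X^1(0)=0,
\end{aligned}
\end{equation*}
where $\pa_xb_{\tilde{\th}}(t),\pa_{x'}b_{\tilde{\th}}(t),\pa_x\si^i_{\tilde{\th}}(t)$ are bounded uniformly in $\tilde{\th}$ by \autoref{ass 1}(i), and the source terms collect the coefficient differences, e.g.
\begin{equation*}
I_b(t)=\big(\pa_xb_\th(t)-\pa_xb_{\tilde{\th}}(t)\big)X^1_\th(t)+\big(\pa_{x'}b_\th(t)-\pa_{x'}b_{\tilde{\th}}(t)\big)\dbE[X^1_\th(t)]+\big(\pa_vb_\th(t)-\pa_vb_{\tilde{\th}}(t)\big)(v(t)-\bar{v}(t)),
\end{equation*}
and analogously for $I^i_\si$ (which, since $\si_\th$ has no mean argument, involves only the $\pa_x\si$ and $\pa_v\si$ differences).

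Applying the $L^p$-estimate of \autoref{th 2.3} to this linear mean-field SDE (the term $\dbE[\D X^1]$ being absorbed via $|\dbE[\D X^1(s)]|\leq(\dbE[\sup_{r\leq s}|\D X^1(r)|^p])^{1/p}$ and a Gronwall argument) reduces the claim to showing
\begin{equation*}
\sup_{d(\th,\tilde{\th})\leq\epsilon}\dbE\[\(\int_0^T|I_b(s)|ds\)^p+\(\int_0^T|I_\si(s)|^2ds\)^{\frac{p}{2}}\]\lra0\q\text{as }\epsilon\ra0.
\end{equation*}
I would then decompose each coefficient difference into an explicit-parameter part and an evaluation-point part, for instance
\begin{equation*}
\begin{aligned}
\pa_xb_\th(t)-\pa_xb_{\tilde{\th}}(t)&=\big[\pa_xb_\th-\pa_xb_{\tilde{\th}}\big]\big(t,\bar{X}_\th(t),\dbE[\bar{X}_\th(t)],\bar{v}(t)\big)\\
&\q+\big[\pa_xb_{\tilde{\th}}\big(t,\bar{X}_\th(t),\dbE[\bar{X}_\th(t)],\bar{v}(t)\big)-\pa_xb_{\tilde{\th}}\big(t,\bar{X}_{\tilde{\th}}(t),\dbE[\bar{X}_{\tilde{\th}}(t)],\bar{v}(t)\big)\big].
\end{aligned}
\end{equation*}
The first bracket is bounded by $C_2\,d(\th,\tilde{\th})\leq C_2\epsilon$ by \autoref{ass 1}(iv), and the second by $C\big(|\bar{X}_\th(t)-\bar{X}_{\tilde{\th}}(t)|+\dbE|\bar{X}_\th(t)-\bar{X}_{\tilde{\th}}(t)|\big)$ through the Lipschitz continuity of the derivatives in \autoref{ass 1}(i).

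The explicit-parameter contributions are then dominated by $C\epsilon^p$ times the uniform-in-$\th$ bound (\ref{3.2}) on $X^1_\th$ (and the $L^\ell$-bounds on $v-\bar{v}$ from \autoref{def 2.1}), so they vanish as $\epsilon\ra0$. For the evaluation-point contributions I would apply H\"older's inequality to separate the small factor $|\bar{X}_\th-\bar{X}_{\tilde{\th}}|$ from the merely uniformly $L^p$-bounded factor $X^1_\th$ (resp.\ $v-\bar{v}$): with conjugate exponents $r,r'$,
\begin{equation*}
\dbE\[\sup_{t}|\bar{X}_\th-\bar{X}_{\tilde{\th}}|^p\,\sup_{t}|X^1_\th|^p\]\leq\(\dbE\big[\sup_t|\bar{X}_\th-\bar{X}_{\tilde{\th}}|^{pr}\big]\)^{\frac{1}{r}}\(\dbE\big[\sup_t|X^1_\th|^{pr'}\big]\)^{\frac{1}{r'}},
\end{equation*}
where the first factor tends to $0$ uniformly over $\{d(\th,\tilde{\th})\leq\epsilon\}$ by \autoref{pro 2.3}(i) and the second is uniformly bounded by (\ref{3.2}).

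The main obstacle is precisely this last step: the source terms are products of a factor that is small in $L^p$ but not bounded and a factor ($X^1_\th$ or $v-\bar{v}$) that is bounded in every $L^p$ but not small, and one cannot simply multiply two $L^p$-bounds. The argument therefore hinges on \autoref{pro 2.3}(i) and the a priori estimate (\ref{3.2}) being available for \emph{all} exponents greater than $1$, which gives exactly the room needed to choose the H\"older exponents $r,r'$ so that smallness is preserved in one factor and uniform boundedness in the other. Once this is arranged, every summand of $I_b$ and $I^i_\si$ carries either a factor $\epsilon$ or a factor tending to zero uniformly over $\{d(\th,\tilde{\th})\leq\epsilon\}$, which yields the stated limit.
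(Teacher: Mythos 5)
Your proof is correct and follows essentially the same route as the paper: the paper writes the identical difference SDE with a uniformly bounded linear part plus source terms $I_{1,\th,\wt\th}, I^i_{2,\th,\wt\th}$ collecting the coefficient differences (only with the roles of $\th$ and $\tilde\th$ swapped relative to your choice, which is immaterial), and then invokes \autoref{th 2.3} and Burkholder--Davis--Gundy, leaving the rest as ``a standard proof.'' Your write-up simply supplies the details the paper omits --- the split into an explicit-parameter part controlled by \autoref{ass 1}(iv) and an evaluation-point part controlled via \autoref{pro 2.3}(i), with H\"older's inequality mediating between the small factor and the merely $L^p$-bounded factor --- all consistent with the paper's intended argument.
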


             \begin{proof}
             Notice
             \begin{equation*}\label{3.1111}
             \left\{
             \begin{aligned}
             d \(X^1_\th(t)-X^1_{\widetilde{\th}}(t)\)  &   = \pa_xb_\th(t) (X^1_\th(t)-X^1_{\widetilde{\th}}(t))
                                                              +\pa_{x'}b_\th(t) \dbE[X^1_\th(t)-X^1_{\widetilde{\th}}(t)]
                                                              +I_{1,\th,\wt{\th}}(t)dt\\
                                                        &\  \ \ +  \sum\limits_{i=1}^d\(\pa_x\si^i_\th(t) (X^1_\th(t)-X^1_{\widetilde{\th}}(t))
                                                        +I^i_{2,\th,\wt{\th}}(t)\)dW^i(t),\\
                X^1_\th(t)-X^1_{\widetilde{\th}}(t)   &=0,
            \end{aligned}
             \right.
             \end{equation*}
             where
             \begin{equation*}\label{3.1111-1}
             \begin{aligned}
               I_{1,\th,\wt{\th}}(t)&= \(\pa_xb_\th(t)- \pa_xb_{\wt{\th}}(t)\)   X^1_{\wt{\th}}(t)
                                     +\(\pa_{x'}b_\th(t)- \pa_{x'}b_{\wt{\th}}(t)\) \dbE[X^1_{\wt{\th}}(t)]\\
                                   &\q \ + \(\pa_{v}b_\th(t)- \pa_{v}b_{\wt{\th}}(t)\)\(v(t)-\bar{v}(t)\),\\
              I^i_{2,\th,\wt{\th}}(t)&= \(\pa_x\si^i_\th(t)- \pa_x\si^i_{\wt{\th}}(t)\)   X^1_{\wt{\th}}(t)
                                     + \(\pa_{v}\si^i_\th(t)- \pa_{v}\si^i_{\wt{\th}}(t)\)\(v(t)-\bar{v}(t)\).
            \end{aligned}
             \end{equation*}
             Then we can prove \autoref{pro 3.1-1} by \autoref{th 2.3} and   Buckholder-Davis-Gundy inequality, following a standard proof.
             \end{proof}
             Set $\delta X^\l_\th(t)=\frac{1}{\l}(X_\th^\l(t)-\bar{X}_\th(t))-X^1_\th(t)$, and we have the following estimate on it.

             \begin{lemma}\label{le 3.1}\rm
             Under \autoref{ass 1},  there exists a constant $C>0$ depending on  $C_0$ and $T$ such that,
             for each $\th\in \Th$ and $p>1$,
             \begin{equation*}
             \begin{aligned}
             &\mathrm{(i)}\ \dbE\bigg[\sup\limits_{0\leq t\leq T} |\delta X^\l_\th(t)|^p  \bigg]
                    \leq C\bigg\{\dbE\Big[\int_0^T(|v(t)|^p+|\bar{v}(t)|^p)dt    \Big]+\Big(\dbE\int_0^T|v(t)|^2dt  \Big)^\frac{p}{2}
                                                 +\Big(\dbE\int_0^T|\bar{v}(t)|^2dt  \Big)^\frac{p}{2}\bigg\}.\\
             &\mathrm{(ii)}\  \lim\limits_{\l\ra 0}\sup\limits_{\th\in\Th}\dbE\bigg[
             \sup\limits_{0\leq t\leq T}|\delta X^\l_\th(t)|^p \bigg]=0.
             \end{aligned}
             \end{equation*}
             \end{lemma}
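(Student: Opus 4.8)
The plan is to write down the linear mean-field SDE solved by $\delta X^\l_\th$ and then apply the $L^p$-estimate of \autoref{th 2.3}. First I would expand the coefficients of (\ref{1.1}) along the segment joining the optimal and perturbed trajectories: for $h=b,\si$ set
\begin{equation*}
\wt h^{\,\l}_x(t)=\int_0^1\pa_x h_\th\big(t,\bar X_\th(t)+r(X^\l_\th(t)-\bar X_\th(t)),\dbE[\bar X_\th(t)]+r\dbE[X^\l_\th(t)-\bar X_\th(t)],\bar{v}(t)+r\l(v(t)-\bar{v}(t))\big)dr,
\end{equation*}
and analogously $\wt h^{\,\l}_{x'}(t)$, $\wt h^{\,\l}_v(t)$. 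By the fundamental theorem of calculus the increment $h_\th(t,X^\l_\th(t),\dbE[X^\l_\th(t)],v^\l(t))-h_\th(t)$ equals $\wt h^{\,\l}_x(t)(X^\l_\th(t)-\bar X_\th(t))+\wt h^{\,\l}_{x'}(t)\dbE[X^\l_\th(t)-\bar X_\th(t)]+\l\wt h^{\,\l}_v(t)(v(t)-\bar{v}(t))$. Dividing by $\l$, using $\frac1\l(X^\l_\th-\bar X_\th)=\delta X^\l_\th+X^1_\th$, and subtracting the variational equation (\ref{3.1}), one finds that $\delta X^\l_\th$ solves a linear mean-field SDE with bounded coefficients $\wt b^{\,\l}_x,\wt b^{\,\l}_{x'},\wt\si^{\,\l}_x$, null initial value, and inhomogeneous terms that measure the gap between the averaged derivatives and the derivatives frozen at the optimum; the drift error is
\begin{equation*}
A^\l_\th(t)=\big(\wt b^{\,\l}_x(t)-\pa_x b_\th(t)\big)X^1_\th(t)+\big(\wt b^{\,\l}_{x'}(t)-\pa_{x'}b_\th(t)\big)\dbE[X^1_\th(t)]+\big(\wt b^{\,\l}_v(t)-\pa_v b_\th(t)\big)(v(t)-\bar{v}(t)),
\end{equation*}
and the diffusion error $B^\l_\th$ is the analogous expression for $\si$ without the mean-field term.

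Because the first-order derivatives of $b_\th,\si_\th$ are bounded uniformly in $\th$ by \autoref{ass 1}(i), \autoref{th 2.3} applies to this linear SDE and gives $\dbE[\sup_t|\delta X^\l_\th(t)|^p]\le C\dbE[(\int_0^T|A^\l_\th|dt)^p+(\int_0^T|B^\l_\th|^2dt)^{p/2}]$ with $C$ independent of $\th$ and $\l$. For part $\mathrm{(i)}$ I would bound each averaged-minus-frozen factor crudely by twice its uniform bound, so that $|A^\l_\th(t)|\le C(|X^1_\th(t)|+\dbE|X^1_\th(t)|+|v(t)-\bar{v}(t)|)$ and similarly for $B^\l_\th$; inserting estimate (\ref{3.2}) for $X^1_\th$ and the moment bounds of admissible controls then yields exactly the claimed inequality.

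For part $\mathrm{(ii)}$ the delicate point is the uniformity in $\th$. The first step is to show $\sup_{\th\in\Th}\dbE[\sup_t|X^\l_\th(t)-\bar X_\th(t)|^q]\le C\l^q$ for every $q>1$: indeed $X^\l_\th-\bar X_\th$ solves a linear SDE whose only inhomogeneity is the $\l$-scaled term $\l\wt b^{\,\l}_v(v-\bar{v})$ and its $\si$-analogue, and \autoref{th 2.3} with $\th$-independent constants gives the $O(\l)$ bound uniformly in $\th$. The second step is to invoke the Lipschitz continuity of $\pa_x b_\th,\pa_{x'}b_\th,\pa_v b_\th$ (and of the $\si$-derivatives) in $(x,x',v)$ from \autoref{ass 1}(i) to estimate along the averaging segment $|\wt b^{\,\l}_x(t)-\pa_x b_\th(t)|\le C(|X^\l_\th(t)-\bar X_\th(t)|+\dbE|X^\l_\th(t)-\bar X_\th(t)|+\l|v(t)-\bar{v}(t)|)$, with identical bounds for the other coefficient gaps. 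Substituting these into $A^\l_\th$ and $B^\l_\th$ exhibits each error term as a product of a factor that is $O(\l)$ in every $L^q$-norm by the first step with one of the merely-bounded factors $X^1_\th$ or $v-\bar{v}$. The main obstacle is then a H\"older bookkeeping: one must split these products so that the small factor is paired with high enough moments of $X^1_\th$ and of the controls --- finite of every order by (\ref{3.2}) and \autoref{def 2.1} --- and check that the resulting constants still depend only on $C_0,T,p$ and the control moments, all independent of $\th$, so that the bound tends to $0$ as $\l\ra0$ uniformly in $\th$.
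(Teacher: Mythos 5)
Your proposal is correct and follows essentially the same route as the paper: rewrite $\delta X^\l_\th$ as the solution of a linear mean-field SDE with bounded averaged coefficients and inhomogeneities of the form (averaged derivative minus frozen derivative) times $X^1_\th$, $\dbE[X^1_\th]$, $v-\bar v$, apply the $L^p$-estimate of \autoref{th 2.3} with $\th$-independent constants, and for (ii) use the Lipschitz continuity of the derivatives to extract a factor $\l$ (the paper does this via $X^\l_\th-\bar X_\th=\l(\delta X^\l_\th+X^1_\th)$ together with part (i) and \eqref{3.2}, which is exactly your preliminary $O(\l)$ bound in disguise) and then closes the products with Young's/H\"older's inequality.
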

            \begin{proof}
          Set $b^\l_\th(s)=b_\th(s,X^\l_\th(s),\dbE[X^\l_\th(s)],v^\l(s)),
              \si^{i,\l}_\th(s)=\si^i_\th(s,X^\l_\th(s),v^\l(s)).$
              According to the definition of $\delta X^\l_\th$, we have
              \begin{equation}\label{3.3}
              \begin{aligned}
              \delta X^\l_\th(t)   &   =\int_0^t\bigg\{\frac{1}{\l}\(b_\th^\l(s)-b_\th(s)\)-\[ \pa_xb_\th(s)X^1_\th(s)
                                        +\pa_{x'}b_\th(s) \dbE[X^1_\th(s)] +\pa_v b_\th(s)(v(s)-\bar{v}(s))\]\bigg\}ds\\
                                   &   +\int_0^t\sum\limits_{i=1}^d\bigg\{\frac{1}{\l}\(\si^{\l,i}_\th(s)-\si^i_\th(s)\)
                                        -\[ \pa_x\si^i_\th(s)X^1_\th(s)
                                     +\pa_v \si^i_\th(s)(v(s)-\bar{v}(s))\]\bigg\}dW^i(s).
              \end{aligned}
              \end{equation}
              Denote
              \begin{equation*}
              \begin{aligned}
              \pi_\th^{\rho\l}(t) &  =(\bar{X}_\th(t)+ \rho\l(\delta X^\l_\th(t)+X^1_\th(t)),
                                      \dbE[\bar{X}_\th(t)]+ \rho\l(\dbE[\delta X^\l_\th(t)]+\dbE[X^1_\th(t)]),\bar{v}(t)+\rho\l(v(t)-\bar{v}(t))),\\
               \tilde{\pi}_\th^{\rho\l}(t) &  =(\bar{X}_\th(t)+ \rho\l(\delta X^\l_\th(t)+X^1_\th(t)), \bar{v}(t)+\rho\l(v(t)-\bar{v}(t))),\\
                  A_\th^{\l}(t) & =\int_0^1\partial_xb_\th(t, \pi_\th^{\rho\l}(t))d\rho,\qq
                  B_\th^{\l}(t)=\int_0^1\partial_{x'}b_\th(t, \pi_\th^{\rho\l}(t))d\rho,\\
                 C_\th^{\l}(t) & =\int_0^1[\pa_v b_\th(t,\pi_\th^{\rho\l}(t))-\pa_v b_\th(t)](v(t)-\bar{v}(t))d\rho\\
                                   &\q  +[A_\th^{\l}(t)-\pa_xb_\th(t)]X_\th^1(t)
                                      +[B_\th^{\l}(t)-\pa_{x'}b_\th(t)]\dbE[X_\th^1(t)],\\
                  D_\th^{\l,i}(t)& =\int_0^1\partial_x\si^i_\th(t, \tilde{\pi}_\th^{\rho\l}(t))d\rho,\\
                  %
                  F_\th^{\l,i}(t) & =\int_0^1[\pa_v \si^i_\th(t,\tilde{\pi}_\th^{\rho\l}(t))-\pa_v \si^i_\th(t)](v(t)-\bar{v}(t))d\rho  +[D_\th^{\l,i}(t)-\pa_x\si^i_\th(t)]X_\th^1(t).
               \end{aligned}
               \end{equation*}
              Then the equation (\ref{3.3}) can be written as
              \begin{equation*}
              \begin{aligned}
               \delta X^\l_\th(t)   &  =\int_0^t\(A_\th^{\l}(s) \delta X^\l_\th(s)+B_\th^{\l}(s) \dbE[\delta X^\l_\th(s)]
                                       + C_\th^{\l}(s)\)ds\\
                                    &\q  +\sum_{i=1}^d\int_0^t\(D_\th^{\l,i}(s) \delta X^\l_\th(s)
                                     + F_\th^{\l,i}(s)\)dW^i(s).
              \end{aligned}
               \end{equation*}
              Notice that $\pa_xb_\th(t),\pa_{x'}b_\th(t),\pa_vb_\th(t), \pa_x\si^i_\th(t),
              \pa_{v}\si^i_\th(t)$ are bounded.
              Thanks to \autoref{th 2.3} and (\ref{3.2}), it yields, for $p>1$,
                 \begin{equation*}\label{3.4}
              \begin{aligned}
              \dbE\[\sup\limits_{0\leq t\leq T }|\delta X^\l_\th(t)|^p\]
                   &\leq  C\dbE   \bigg(\int_0^T|C_\th^{\lambda}(s)|ds \bigg)^p
                   +C\dbE   \bigg(\int_0^T\sum\limits_{i=1}^d |F_\th^{\lambda,i}(s)|^2ds \bigg)^\frac{p}{2}\\
               &\leq C\bigg\{\dbE\Big[\int_0^T(|v(t)|^p+|\bar{v}(t)|^p)dt    \Big]
                         +\Big(\dbE\int_0^T|v(t)|^2dt  \Big)^\frac{p}{2}
                           +\Big(\dbE\int_0^T|\bar{v}(t)|^2dt  \Big)^\frac{p}{2} \bigg\}.
              \end{aligned}
               \end{equation*}

    \ms

              Next, we turn to prove $\mathrm{(ii)}$.  It is enough to prove
             \begin{equation*}
             \left\{
             \begin{aligned}
              & \lim\limits_{\l\ra0}\sup\limits_{\th\in \Th}  \dbE   \bigg(\int_0^T|C_\th^{\lambda}(s)|ds \bigg)^p=0,\\
             &  \lim\limits_{\l\ra0}\sup\limits_{\th\in \Th}   \dbE\bigg(\int_0^T\sum\limits_{i=1}^d |F_\th^{\lambda,i}(s)|^2ds \bigg)^\frac{p}{2}=0.
             \end{aligned}
             \right.
             \end{equation*}
             On one hand, we have
             \begin{equation*}
             \begin{aligned}
             |C_\th^{\lambda}(s)|^p  &  \leq \(2^{p-1}\vee1\)
                  \bigg\{\int_0^1|\pa_v b_\th(s,\pi_\th^{\rho\l}(s))-\pa_v b_\th(s)|^p|v(s)-\bar{v}(s)|^pd\rho\\
             &\qq\qq\qq\q  +\int_0^1|\pa_x b_\th(s,\pi_\th^{\rho\l}(s))-\pa_x b_\th(s)|^p|X_\th^1(s)|^p d\rho\\
             &\qq\qq\qq\q   +\int_0^1|\pa_{x'} b_\th(s,\pi_\th^{\rho\l}(s))-\pa_{x'} b_\th(s)|^p|\dbE[X_\th^1(s)]|^p d\rho \bigg\}.
             \end{aligned}
             \end{equation*}
              From $ab\leq \frac{1}{2}a^2+\frac{1}{2}b^2, a,b\in \dbR^+$,  we have
              \begin{equation*}
             \begin{aligned}
             |C_\th^{\lambda}(s)|^p  &  \leq C_p\l^p
                  \bigg\{|\delta X^\l_\th(s)+X^1_\th(s)|^{2p}+|\dbE[\delta X^\l_\th(s)+X^1_\th(s)]|^{2p} \\
                  &   \q  +|v(s)-\bar{v}(s)|^{2p}+|X_\th^1(s)|^{2p}+|\dbE[X_\th^1(s)]|^{2p}\bigg\},
             \end{aligned}
             \end{equation*}
             where the constant $C_p$ depends on $p$.
             By (\ref{3.2}) and $\mathrm{(i)}$ in \autoref{le 3.1}
              $\lim\limits_{\l\ra0}\sup\limits_{\th\in \Th}  \dbE   (\int_0^T|C_\th^{\lambda}(s)|ds )^p=0$ follows immediately.
              On the other hand,  based on the fact that
               \begin{equation*}
             \begin{aligned}
             |F_\th^{\lambda,i}(s)|^2&  \leq C\l^2
                  \bigg\{|\delta X^\l_\th(s)+X^1_\th(s)|^{4}+|\dbE[\delta X^\l_\th(s)+X^1_\th(s)]|^{4} \\
                  &   \q     +|v(s)-\bar{v}(s)|^{4}+|X_\th^1(s)|^{4}+|\dbE[X_\th^1(s)]|^{4}\bigg\},
             \end{aligned}
             \end{equation*}
             we obtain
             \begin{equation*}
             \begin{aligned}
             \dbE \bigg(\int_0^T\sum\limits_{i=1}^d |F_\th^{\rho\lambda,i}(s)|^2ds \bigg)^\frac{p}{2}
             &\leq C_{p,d}\l^p
             \bigg\{\dbE\(\int_0^T|v(s)-\bar{v}(s)|^4ds\)^{\frac{p}{2}}\\
             &\q + \dbE\[\sup_{0\leq s \leq T}|\delta X^\l_\th(s)|^{2p}\]
             +\dbE\[\sup_{0\leq s \leq T}|X^1_\th(s)|^{2p}\]\bigg\},
              \end{aligned}
             \end{equation*}
             where the constant $C_{p,d}$ depends on $p$ and $d$.
             Notice that by H\"{o}lder inequality we have\\
            $$\text{if}\  1<p<2, \q \dbE\(\int_0^T|v(s)-\bar{v}(s)|^4ds\)^{\frac{p}{2}}\leq \(\dbE\int_0^T|v(s)-\bar{v}(s)|^4ds\)^{\frac{p}{2}},$$
            and  \\
           $$\text{if}\  p>2,\q \dbE\(\int_0^T|v(s)-\bar{v}(s)|^4ds\)^{\frac{p}{2}}\leq T^{\frac{p}{p-2}}\dbE\int_0^T|v(s)-\bar{v}(s)|^{2p}ds.$$
           Finally, by  (\ref{3.2}) and $\mathrm{(i)}$ in \autoref{le 3.1} again, we derive
           \begin{equation*}
           \lim\limits_{\l\ra0}\sup\limits_{\th\in \Th}   \dbE\bigg(\int_0^T\sum\limits_{i=1}^d |F_\th^{\rho\lambda,i}(s)|^2ds \bigg)^\frac{p}{2}=0.
           \end{equation*}
           \end{proof}

           We next introduce the  variational BSDE on $[0,T]$: for each $\th\in \Th$,
            \begin{equation}\label{3.5}
             \left\{
             \begin{aligned}
             -d Y^1_\th(t)  &   =\bigg(\pa_xf_\th(t)X^1_\th(t)+\pa_{x'}f_\th(t) \dbE[X^1_\th(t)]+\pa_yf_\th(t)Y^1_\th(t)+\pa_zf_\th(t)Z^1_\th(t)\\
             &\q  \ \ \  +\pa_v f_\th(t)(v(t)-\bar{v}(t))\bigg)dt
                                -Z^1_\th(t)dW(t),\\
               Y^1_\th(T)  &=\pa_x\Phi_\th(T)X_\th^1(T)+\pa_{x'}\Phi_\th(T)\dbE[X_\th^1(T)].
             \end{aligned}
             \right.
             \end{equation}
            Since $|\pa_zf_\th(t)|\leq C_2(1+|\bar{Z}_\th(t)|)$, we know from (ii) in \eqref{2.1-0} that $\pa_{z}f_\th \cd W$ is a BMO martingale and the equation (\ref{3.5}) is a linear BSDE with unbounded coefficient.
             Due to $|\pa_x\Phi_\th(T)|\leq C$ and $|\pa_{x'}\Phi_\th(T)|\leq C$,
           by \cite[Proposition 3.1]{Hu-Ji-Xu-2022}, it has a unique solution $(Y,Z)\in \bigcap_{p>1}(\cS^p_{\dbF}(0,T;\dbR) \ts \cH^{2,\frac{p}{2}}_\dbF(0,T;\dbR^d))$, and we further have for all $\bar{p}>p_{\pa_z f_\th}^{*}$ and $p_{\pa_z f_\th}^{*}<p<\bar{p}$,  there exists a constant $C>0$ depending on $p,\bar{p}, T, \|\pa_xf_\th \|_\i, \|\pa_{x'}f_\th \|_\i, \|\pa_yf_\th \|_\i $ and $\|\pa_{z}f_\th \cd W\|_{BMO}$ such that,
             \begin{equation}\label{3.6}
             \begin{aligned}
             & \dbE\bigg[ \sup_{0\leq t\leq T} |Y_\th^1(t)|^p+\bigg( \int_0^{T}  |Z_\th^1(t)|^2dt    \bigg)^\frac{p}{2}   \bigg]\\
             &  \leq C \bigg\{\dbE\Big[\int_0^T(|v(t)|^{\bar{p}}+|\bar{v}(t)|^{\bar{p}})dt    \Big]
               +\Big(\dbE\int_0^T|v(t)|^2dt  \Big)^\frac{\bar{p}}{2}
                                                 +\Big(\dbE\int_0^T|\bar{v}(t)|^2dt  \Big)^\frac{^{\bar{p}}}{2}\bigg\}^\frac{p}{\bar{p}}.
             \end{aligned}
             \end{equation}

             Then we consider the continuity of the pair $(Y^1_\th, Z^1_\th)$ with respect to $\th$.
              \begin{proposition}\label{pro 3.3}\rm
           Under \autoref{ass 1}, for   $q>p_{\pa_z f_\th}^{*} $,
              \begin{equation*}
              \lim_{\epsilon\ra0}\sup_{d(\th,\tilde{\th})\leq\epsilon}
              \dbE\bigg[\sup_{0 \leq t\leq T} |Y_\th^1(t)-Y_{\tilde{\th}}^1(t)|^q+
              \(\int_0^T|Z_\th^1(t)-Z_{\tilde{\th}}^1(t)|^2\)^\frac{q}{2}\bigg]=0.
             \end{equation*}
             \end{proposition}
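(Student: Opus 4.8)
The plan is to treat the difference $(\Delta Y,\Delta Z):=(Y^1_\th-Y^1_{\tilde\th},\,Z^1_\th-Z^1_{\tilde\th})$ as the solution of a linear BSDE with unbounded but BMO driver, and then reduce the claim to the continuity in $\th$ of its terminal datum and its source term. Subtracting the two copies of \eqref{3.5} and adding and subtracting the mixed terms (keeping $\pa_yf_\th\Delta Y$ and $\pa_zf_\th\Delta Z$ in the coefficient), the equation for $\Delta Y$ reads
\begin{equation*}
-d\Delta Y(t)=\Big(\pa_yf_\th(t)\Delta Y(t)+\pa_zf_\th(t)\Delta Z(t)+J_{\th,\tilde\th}(t)\Big)dt-\Delta Z(t)dW(t),
\end{equation*}
with terminal value $\Delta\xi:=\pa_x\Phi_\th(T)X^1_\th(T)+\pa_{x'}\Phi_\th(T)\dbE[X^1_\th(T)]-\pa_x\Phi_{\tilde\th}(T)X^1_{\tilde\th}(T)-\pa_{x'}\Phi_{\tilde\th}(T)\dbE[X^1_{\tilde\th}(T)]$ and source
\begin{equation*}
\begin{aligned}
J_{\th,\tilde\th}(t)&=\pa_xf_\th(t)\Delta X^1(t)+\pa_{x'}f_\th(t)\dbE[\Delta X^1(t)]+(\pa_xf_\th(t)-\pa_xf_{\tilde\th}(t))X^1_{\tilde\th}(t)\\
&\quad+(\pa_{x'}f_\th(t)-\pa_{x'}f_{\tilde\th}(t))\dbE[X^1_{\tilde\th}(t)]+(\pa_yf_\th(t)-\pa_yf_{\tilde\th}(t))Y^1_{\tilde\th}(t)\\
&\quad+(\pa_zf_\th(t)-\pa_zf_{\tilde\th}(t))Z^1_{\tilde\th}(t)+(\pa_vf_\th(t)-\pa_vf_{\tilde\th}(t))(v(t)-\bar v(t)),
\end{aligned}
\end{equation*}
where $\Delta X^1:=X^1_\th-X^1_{\tilde\th}$. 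Since $|\pa_zf_\th(t)|\leq C_2(1+|\bar Z_\th(t)|)$, part (ii) of \eqref{2.1-0} shows $\pa_zf_\th\cd W\in BMO$ with norm bounded uniformly in $\th$; hence $p_{\pa_zf_\th}$ is bounded below and $p_{\pa_zf_\th}^{*}$ bounded above, uniformly in $\th$, so a single pair $\bar q>q>p_{\pa_zf_\th}^{*}$ serves for all pairs $(\th,\tilde\th)$ under consideration.

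With this structure in hand, I would invoke the a priori $L^p$-estimate for linear BSDEs with BMO driver that already underlies \eqref{3.6} (\cite[Proposition 3.1]{Hu-Ji-Xu-2022}). Applied to the $\Delta$-equation it gives, for $\bar q>q>p_{\pa_zf_\th}^{*}$,
\begin{equation*}
\dbE\bigg[\sup_{0\leq t\leq T}|\Delta Y(t)|^q+\Big(\int_0^T|\Delta Z(t)|^2dt\Big)^{\frac{q}{2}}\bigg]\leq C\,\dbE\bigg[|\Delta\xi|^{\bar q}+\Big(\int_0^T|J_{\th,\tilde\th}(t)|dt\Big)^{\bar q}\bigg]^{\frac{q}{\bar q}},
\end{equation*}
with $C$ uniform in $(\th,\tilde\th)$ because $\|\pa_xf_\th\|_\i,\|\pa_{x'}f_\th\|_\i,\|\pa_yf_\th\|_\i$ and $\|\pa_zf_\th\cd W\|_{BMO}$ are uniformly bounded by \autoref{ass 1} and \eqref{2.1-0}. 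It therefore suffices to show that the right-hand side tends to $0$ uniformly over $d(\th,\tilde\th)\leq\epsilon$ as $\epsilon\ra0$.

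The terminal contribution is handled by writing $\Delta\xi=\pa_x\Phi_\th(T)\Delta X^1(T)+(\pa_x\Phi_\th(T)-\pa_x\Phi_{\tilde\th}(T))X^1_{\tilde\th}(T)+\cdots$ and combining boundedness of $\pa_x\Phi_\th,\pa_{x'}\Phi_\th$ (\autoref{ass 1}(ii)), the Lipschitz-in-$\th$ bound $|\pa_x\Phi_\th-\pa_x\Phi_{\tilde\th}|\leq C_2 d(\th,\tilde\th)$ (\autoref{ass 1}(iv)), the $\cS^{\bar q}_\dbF$-bound \eqref{3.2} on $X^1_{\tilde\th}$, and the continuity $\dbE[\sup_t|\Delta X^1(t)|^{\bar q}]\ra0$ from \autoref{pro 3.1-1}. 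In the source term, the two pieces carrying $\Delta X^1$ vanish by \autoref{pro 3.1-1}, while each term of the form $(\pa_\bullet f_\th-\pa_\bullet f_{\tilde\th})\,\eta_{\tilde\th}$ with $\bullet\in\{x,x',y,v\}$ and $\eta_{\tilde\th}\in\{X^1_{\tilde\th},\dbE[X^1_{\tilde\th}],Y^1_{\tilde\th},v-\bar v\}$ is controlled by $C_2 d(\th,\tilde\th)$ (\autoref{ass 1}(iv)) times a factor whose $\bar q$-th moment is uniformly bounded in $\tilde\th$ via \eqref{3.2}, \eqref{3.6} and \autoref{def 2.1}.

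The delicate term, and the main obstacle, is $(\pa_zf_\th-\pa_zf_{\tilde\th})Z^1_{\tilde\th}$, since $Z^1_{\tilde\th}$ is only in $\cH^{2,p}_\dbF$ and not bounded. Here I would exploit that \autoref{ass 1}(iv) bounds the increment $|\pa_zf_\th(t)-\pa_zf_{\tilde\th}(t)|\leq C_2 d(\th,\tilde\th)$ pointwise, not merely in BMO, so that by Cauchy--Schwarz
\begin{equation*}
\int_0^T|(\pa_zf_\th(t)-\pa_zf_{\tilde\th}(t))Z^1_{\tilde\th}(t)|dt\leq C_2\, d(\th,\tilde\th)\sqrt{T}\Big(\int_0^T|Z^1_{\tilde\th}(t)|^2dt\Big)^{\frac12}.
\end{equation*}
Raising to the power $\bar q$ and taking expectations, \eqref{3.6} furnishes a bound, uniform in $\tilde\th$, for $\dbE[(\int_0^T|Z^1_{\tilde\th}|^2dt)^{\bar q/2}]$, so this contribution is $\leq C\,d(\th,\tilde\th)^{\bar q}\ra0$. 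Collecting all pieces and letting $\epsilon\ra0$ completes the argument. The only point demanding care beyond bookkeeping is the uniformity of the exponents $q,\bar q$ and of the constant $C$ in $\th$, which is exactly what the uniform BMO bound in \eqref{2.1-0} secures.
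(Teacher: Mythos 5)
Your overall architecture is the one the paper intends (it omits the proof, remarking only that it is "similar to that of \autoref{pro 2.3}"): write the equation for $(\Delta Y,\Delta Z)$, keep $\pa_yf_\th\Delta Y+\pa_zf_\th\Delta Z$ as the linear driver with BMO coefficient, and apply the a priori $L^p$-estimate behind \eqref{3.6}, with the uniformity in $\th$ secured by the uniform bound on $\|\bar Z_\th\cd W\|_{BMO}$ from \eqref{2.1-0}. That part, including the treatment of the terminal datum and of the terms carrying $\Delta X^1$, is sound.

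There is, however, a concrete error in the step you yourself single out as the main obstacle. In the paper's shorthand, $\pa_zf_\th(t)$ denotes $\pa_zf_\th(t,\bar X_\th(t),\dbE[\bar X_\th(t)],\bar Y_\th(t),\bar Z_\th(t),\bar v(t))$, evaluated along the $\th$-trajectory, while $\pa_zf_{\tilde\th}(t)$ is evaluated along the $\tilde\th$-trajectory. \autoref{ass 1}(iv) only gives $|\pa_zf_\th-\pa_zf_{\tilde\th}|\leq C_2d(\th,\tilde\th)$ \emph{at the same space point}; combined with the Lipschitz continuity of $\pa_zf_\th$ in $(x,x',y,z,v)$ from \autoref{ass 1}(iii), the correct pointwise bound is
\begin{equation*}
|\pa_zf_\th(t)-\pa_zf_{\tilde\th}(t)|\leq C\big(d(\th,\tilde\th)+|\bar X_\th(t)-\bar X_{\tilde\th}(t)|+|\dbE[\bar X_\th(t)-\bar X_{\tilde\th}(t)]|+|\bar Y_\th(t)-\bar Y_{\tilde\th}(t)|+|\bar Z_\th(t)-\bar Z_{\tilde\th}(t)|\big),
\end{equation*}
and the last term is not $O(d(\th,\tilde\th))$ pointwise — it is only small in $\cH^{2,q}_\dbF$ by \autoref{pro 2.3}(ii). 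So your Cauchy--Schwarz bound $\int_0^T|(\pa_zf_\th-\pa_zf_{\tilde\th})Z^1_{\tilde\th}|dt\leq C_2d(\th,\tilde\th)\sqrt{T}(\int_0^T|Z^1_{\tilde\th}|^2dt)^{1/2}$ is false as stated. The gap is repairable: the missing contribution is
\begin{equation*}
\int_0^T|\bar Z_\th(t)-\bar Z_{\tilde\th}(t)|\,|Z^1_{\tilde\th}(t)|\,dt\leq\Big(\int_0^T|\bar Z_\th-\bar Z_{\tilde\th}|^2dt\Big)^{\frac12}\Big(\int_0^T|Z^1_{\tilde\th}|^2dt\Big)^{\frac12},
\end{equation*}
and after raising to the power $\bar q$ and applying H\"older, the first factor tends to $0$ uniformly over $d(\th,\tilde\th)\leq\epsilon$ by \autoref{pro 2.3}(ii) (with exponent $2\bar q>p^{*}_{\pa_zf_\th}$) while the second is bounded by \eqref{3.6}. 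The same trajectory-dependence issue affects your bounds on $(\pa_\bullet f_\th-\pa_\bullet f_{\tilde\th})\eta_{\tilde\th}$ for $\bullet\in\{x,x',y,v\}$ and on $\pa_x\Phi_\th(T)-\pa_x\Phi_{\tilde\th}(T)$ (for the latter, note \autoref{ass 1}(ii) gives only continuity, not Lipschitz continuity, in $(x,x')$); these are handled the same way, but they must be acknowledged rather than absorbed into a pointwise $C_2d(\th,\tilde\th)$.
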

             The proof is similar to that of \autoref{pro 2.3}.  Hence we omit it to save space.

             \ms

             Set
             \begin{equation}\label{3.9}
             \begin{aligned}
             \delta Y^\l_\th(t)&=\frac{1}{\l}(Y_\th^\l(t)-\bar{Y}_\th(t))-Y^1_\th(t),\q
             \delta Z^\l_\th(t)=\frac{1}{\l}(Z_\th^\l(t)-\bar{Z}_\th(t))-Z^1_\th(t),\\
             \G_\th(t)&:=\cE\(\int_0^t\pa_{z}f_\th(s)^\top dW(s)\),\ \ \ t\in[0,T].
             \end{aligned}
             \end{equation}
             The following proposition demonstrates the continuity of $(\delta Y^\l_\th, \delta Z^\l_\th)$ with respect to $\th$.
              \begin{proposition}\label{pro 3.2}   \rm
             Under \autoref{ass 1}, for  $p\in(1\vee 2p_{\pa_z f_\th}^{-1},2)$,
         \begin{equation*}
             \begin{aligned}
                \lim\limits_{\l\ra 0}\sup\limits_{\th\in\Th}\dbE\bigg[
             \sup\limits_{0\leq t\leq T}
             \G_\th(t)|\delta Y^\l_\th(t)|^p
             +\(\int_0^{T}(\G_\th(t))^\frac{2}{p}|\delta Z^\l_\th(t)|^2dt\)^{\frac{p}{2}}\bigg]=0.
              \end{aligned}
              \end{equation*}

             \end{proposition}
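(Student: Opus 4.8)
The plan is to derive the linear BSDE solved by $(\delta Y^\l_\th,\delta Z^\l_\th)$, to reduce the assertion to a $\G_\th$-weighted a priori estimate for that equation, and finally to show that the resulting terminal datum and source term tend to $0$ uniformly in $\th$ as $\l\ra0$. First I would linearize the generator: writing the interpolation $\bar X_\th+\rho\l(\delta X^\l_\th+X^1_\th)$ in the state variable and the analogous interpolations in the other arguments, the mean value theorem turns $\frac1\l\big(f_\th(\cdots,v^\l)-f_\th(\cdots,\bar v)\big)$ into $\tilde A^\l_\th\frac1\l(X^\l_\th-\bar X_\th)+\tilde B^\l_\th\dbE[\cdots]+\tilde C^\l_\th\frac1\l(Y^\l_\th-\bar Y_\th)+\tilde D^\l_\th\frac1\l(Z^\l_\th-\bar Z_\th)+\tilde E^\l_\th(v-\bar v)$, where $\tilde A^\l_\th,\dots,\tilde E^\l_\th$ are the integrated derivatives $\int_0^1\pa_xf_\th,\int_0^1\pa_{x'}f_\th,\int_0^1\pa_yf_\th,\int_0^1\pa_zf_\th,\int_0^1\pa_vf_\th$ evaluated along the interpolations. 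Subtracting the variational BSDE \eqref{3.5} and using $\frac1\l(X^\l_\th-\bar X_\th)=\delta X^\l_\th+X^1_\th$ (and likewise for $Y,Z$), I obtain
\begin{equation*}
-d\delta Y^\l_\th(t)=\big(\tilde C^\l_\th(t)\delta Y^\l_\th(t)+\tilde D^\l_\th(t)\delta Z^\l_\th(t)+I^\l_\th(t)\big)dt-\delta Z^\l_\th(t)dW(t),
\end{equation*}
where $\tilde C^\l_\th$ is bounded, $\tilde D^\l_\th$ satisfies $|\tilde D^\l_\th|\les C_1(1+|Z^\l_\th|+|\bar Z_\th|)$ so that $\tilde D^\l_\th\cd W\in BMO$ with norm bounded uniformly in $(\th,\l)$ by \autoref{th 2.3}, and
\begin{equation*}
\begin{aligned}
I^\l_\th=&\,\tilde A^\l_\th\delta X^\l_\th+\tilde B^\l_\th\dbE[\delta X^\l_\th]+(\tilde A^\l_\th-\pa_xf_\th)X^1_\th+(\tilde B^\l_\th-\pa_{x'}f_\th)\dbE[X^1_\th]\\
&+(\tilde C^\l_\th-\pa_yf_\th)Y^1_\th+(\tilde D^\l_\th-\pa_zf_\th)Z^1_\th+(\tilde E^\l_\th-\pa_vf_\th)(v-\bar v).
\end{aligned}
\end{equation*}
The terminal value $\delta Y^\l_\th(T)$ admits a parallel decomposition coming from the smoothness of $\Phi_\th$.

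The core of the argument is the $\G_\th$-weighted a priori estimate. Applying It\^o's formula to $\G_\th(t)|\delta Y^\l_\th(t)|^p$ and using $d\G_\th=\G_\th\pa_zf_\th^\top dW$, the cross-variation term $p\G_\th|\delta Y^\l_\th|^{p-2}\delta Y^\l_\th\pa_zf_\th^\top\delta Z^\l_\th$ is engineered to cancel the leading drift contribution $-p\G_\th|\delta Y^\l_\th|^{p-2}\delta Y^\l_\th\tilde D^\l_\th\delta Z^\l_\th$, so that only the remainder carrying $\tilde D^\l_\th-\pa_zf_\th$ survives; this is precisely why the weight is built from the base derivative $\pa_zf_\th$, and it is the mechanism that tames the quadratic growth. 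Since $(\tilde D^\l_\th-\pa_zf_\th)\cd W$ is again a $BMO$-martingale with norm bounded uniformly in $(\th,\l)$, Young's inequality lets the surviving $\delta Z^\l_\th$ be absorbed into the positive It\^o correction $\tfrac{p(p-1)}2\G_\th|\delta Y^\l_\th|^{p-2}|\delta Z^\l_\th|^2$, at the cost of a Gronwall factor whose exponential integrability is furnished by the John--Nirenberg inequality. To recover the weighted $Z$-norm in the statement I would use the identity $\G_\th^{2/p}|\delta Z^\l_\th|^2=\big(\G_\th|\delta Y^\l_\th|^{p-2}|\delta Z^\l_\th|^2\big)\big(\G_\th|\delta Y^\l_\th|^p\big)^{(2-p)/p}$ followed again by Young's inequality, which needs the integrability of $\G_\th^{2/p}$; the latter is guaranteed precisely by the reverse H\"older inequality under the standing hypothesis $p>2p_{\pa_zf_\th}^{-1}$ (equivalently $2/p<p_{\pa_zf_\th}$). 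As $\|\pa_zf_\th\cd W\|_{BMO}\les M_2$, all constants are uniform in $\th$, and the outcome is
\begin{equation*}
\dbE\bigg[\sup_{0\les t\les T}\G_\th(t)|\delta Y^\l_\th(t)|^p+\Big(\int_0^T\G_\th(t)^{2/p}|\delta Z^\l_\th(t)|^2dt\Big)^{p/2}\bigg]\les C\,\dbE\bigg[\G_\th(T)|\delta Y^\l_\th(T)|^p+\int_0^T\G_\th(t)|I^\l_\th(t)|^pdt\bigg],
\end{equation*}
with $C$ independent of $\th$ and $\l$.

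It then remains to show the right-hand side tends to $0$ uniformly in $\th$. The terminal term vanishes by the boundedness and smoothness of $\Phi_\th$ together with $\dbE\sup_t|\delta X^\l_\th|^p\ra0$ from \autoref{le 3.1}. In $I^\l_\th$, the bounded-coefficient pieces $\tilde A^\l_\th\delta X^\l_\th$ and $\tilde B^\l_\th\dbE[\delta X^\l_\th]$ are again controlled by \autoref{le 3.1}; for the coefficient-difference pieces the Lipschitz continuity of the derivatives (\autoref{ass 1}) gives bounds such as $|\tilde D^\l_\th-\pa_zf_\th|\les C\big(|X^\l_\th-\bar X_\th|+|\dbE(X^\l_\th-\bar X_\th)|+|Y^\l_\th-\bar Y_\th|+|Z^\l_\th-\bar Z_\th|+\l|v-\bar v|\big)$, so each difference tends to $0$ as the perturbed trajectory converges to the optimal one. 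Although $I^\l_\th$ contains $\delta Z^\l_\th$ through these increments, the increments carry the multiplier just displayed and are paired with quantities ($X^1_\th,Y^1_\th,Z^1_\th,\frac1\l(Z^\l_\th-\bar Z_\th)$, etc.) whose weighted norms are bounded uniformly in $(\th,\l)$ by \eqref{3.2}, \eqref{3.6} and \autoref{th 2.3}, so the circular dependence is harmless. The hard part will be the term $(\tilde D^\l_\th-\pa_zf_\th)Z^1_\th$, in which both factors are unbounded: here I would combine the reverse H\"older inequality (to gain integrability of the weight $\G_\th$ and of the $BMO$ quantities $Z^\l_\th,\bar Z_\th,Z^1_\th$) with a H\"older splitting and dominated convergence to conclude $\dbE\int_0^T\G_\th|(\tilde D^\l_\th-\pa_zf_\th)Z^1_\th|^pdt\ra0$ uniformly in $\th$. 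I expect this to be the main obstacle, since it couples the vanishing-but-unbounded coefficient difference $\tilde D^\l_\th-\pa_zf_\th$ with the unbounded $Z^1_\th$ under the weight $\G_\th$, and making its control uniform in $\th$ is exactly where the joint use of $BMO$ theory, the reverse H\"older inequality, and the John--Nirenberg inequality is indispensable.
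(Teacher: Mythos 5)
Your proposal follows essentially the same route as the paper: the same linearization of the generator around the optimal trajectory, the same $\G_\th$-weighted a priori estimate for the resulting linear BSDE with unbounded BMO coefficient, and the same final step of driving the terminal datum and the source term to zero uniformly in $\th$ via \autoref{le 3.1}, the reverse H\"older inequality and the John--Nirenberg inequality. The only substantive difference is that you sketch a direct It\^o-formula derivation of the weighted estimate (via the cross-variation cancellation with $\pa_z f_\th$), whereas the paper imports that estimate wholesale from \cite[Proposition 3.2]{Hu-Ji-Xu-2022}; your identification of $(\tilde D^\l_\th-\pa_z f_\th)Z^1_\th$ as the critical source term corresponds exactly to the paper's treatment of $\G_\th^{1/p}|\bar Z_\th|\,|Z^1_\th|$.
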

             \begin{proof}
Set
                \begin{equation*}
              \begin{aligned}
              \kappa^{\l \rho}(T)&:=\(\bar{X}_\th(T)+\l\rho(X^1_\th(T)+\d X_\th^\l(T)),
                                      \dbE[\bar{X}_\th(T)]+\l\rho(\dbE[X^1_\th(T)]+\dbE[\d X_\th^\l(T)])\), \\
                \pi_\th^{\l\rho}(t)&:= \(\bar{X}_\th(t)+\l\rho(X^1_\th(t)+\d X_\th^\l(t)),
                                      \dbE[\bar{X}_\th(t)]+\l\rho(\dbE[X^1_\th(t)]+\dbE[\d X_\th^\l(t)])\\
                                   &\q\  \  \ \bar{Y}_\th(t)+\l\rho(Y^1_\th(t)+\d Y_\th^\l(t)),
                                     \bar{Z}_\th(t)+\l\rho(Z^1_\th(t)+\d Z_\th^\l(t)), \bar{v}(t)+\l\rho(v(t)-\bar{v}(t))\),\\
                A^\l_\th(t) & :=\int_0^1\pa_xf_\th(t,\pi_\th^{\l\rho}(t))d\rho,\q
                                    \bar{A}^\l_\th(t):=\int_0^1\pa_{x'}f_\th(t,\pi_\th^{\l\rho}(t))d\rho,\\
                B^\l_\th(t) & :=\int_0^1\pa_yf_\th(t,\pi_\th^{\l\rho}(t))d\rho,\q
                C^\l_\th(t)  :=\int_0^1\pa_zf_\th(t,\pi_\th^{\l\rho}(t))d\rho.
               \end{aligned}
              \end{equation*}  By  the definitions of $\delta Y^\l_\th, \delta Z^\l_\th$, we arrive at
               \begin{equation*}
              \begin{aligned}
                \d Y^{\l}_\th(t) & =  I^{1,\l}_\th+ I^{2,\l}_\th\\
                                   &\q
                                   +\int_t^T \(\pa_xf(s)\d X_\th^{\l}(s)+\pa_{x'}f(s)\dbE[\d X_\th^{\l}(s)]
                                     +\pa_yf(s)\d Y_\th^{\l}(s)+\pa_zf(s)\d Z_\th^{\l}(s)
                                  +D^\l_\th(s) \)ds\\
                                  & \q -\int_t^T\d Z_\th^{\l}(s)dW(s) , \ t\in[0,T],
              \end{aligned}
              \end{equation*}
               where
                \begin{equation*}
              \begin{aligned}
                  I^{1,\l}_\th &:=\int_0^1\pa_x\Phi_{\th}(\kappa^{\l \rho}(T))d\rho   \cd   \d X^\l_{\th}(T)
                                  +\int_0^1\pa_{x'}\F_\th(\kappa^{\l \rho}(T))d\rho   \cd  \dbE[ \d X^\l_\th(T)],\\
                I^{2,\l}_\th &:=  \( \int_0^1\pa_x\F_\th(\kappa^{\l \rho}(T))d\rho-\pa_x\F_\th(T)     \) X_\th^1(T)+   \( \int_0^1\pa_{x'}\F_\th(\kappa^{\l \rho}(T))d\rho-\pa_{x'}\F_\th(T)     \) \dbE[X_\th^1(T)],\\
                D^\l_\th(t) & :=\int_0^1\(\pa_vf_\th(t,\pi_\th^{\l\rho}(t))-\pa_vf_\th(t)\)(v(t)-\bar{v}(t))d\rho+\(A^\l_\th(t)-\pa_xf_\th(t)\)(X^\l_\th(t)-\bar{X}_\th(t))\\
                              &\q\ \
                              + \(\bar{A}^\l_\th(t)-\pa_{x'}f_\th(t)\)\dbE[X^\l_\th(t)-\bar{X}_\th(t)]+\(B^\l_\th(t)-\pa_yf_\th(t)\)(Y^\l_\th(t)-\bar{Y}_\th(t))\\
                              &\q\ \
                              +\( C^\l_\th(t)-\pa_zf_\th(t)   \)(Z^\l_\th(t)-\bar{Z}_\th(t)).
               \end{aligned}
              \end{equation*}
               By \cite[Proposition 3.2]{Hu-Ji-Xu-2022}, it yields, for any $p\in(1\vee 2p_{\pa_z f_\th}^{-1},2)$ and $p<\bar{p}<2$,
               \begin{equation}\label{3.7}
              \begin{aligned}
                & \dbE\bigg[\sup_{t\in[0,T]}\G_\th(t)|\d Y^{\l}_\th(t)  |^p
                +\(\int_{0}^{T}(\G_\th(t))^\frac{2}{p}|\d Z^{\l}_\th(t) |^2dt\)^{\frac{p}{2}} \bigg]\\
                & \leq  C_0\bigg(\dbE\[(\G_\th(T))^\frac{\bar{p}}{p}|I^{1,\l}_\th+I^{2,\l}_\th|^{\bar{p}}
                +\(\int_{0}^T (\G_\th(t))^{\frac{1}{p}}|\pa_xf(t)\d X_\th^{\l}(t)+\pa_{x'}f(t)\dbE[\d X_\th^{\l}(t)]+
                D^\l_\th(t)|dt\)^{\bar{p}}\]\bigg)^{\frac{p}{\bar{p}}}.
             \end{aligned}
              \end{equation}

              First,  reverse H\"{o}lder inequality leads to that, for $p\in(1\vee 2p_{\pa_z f_\th}^{-1},2)$,
              \begin{equation*}
              \dbE[(\G_\th(T))^{\frac{2}{p}}]\leq K(\frac{2}{p}, \|\pa_zf_\th\cd W\|).
\mathcal{}              \end{equation*}
              Consequently, from  \autoref{le 3.1}, we know, for all $p\in(1\vee 2p_{\pa_z f_\th}^{-1},2)$ and $p<\bar{p}<2$,
              \begin{equation*}
              \begin{aligned}
             & \dbE\[(\G_\th(T))^\frac{\bar{p}}{p}|I^{1,\l}_\th|^{\bar{p}}\]\\
              & \leq C\dbE\[(\G_\th(T))^\frac{\bar{p}}{p}\cd(|\d X_\th^\l(T)|^{\bar{p}}+|\dbE[\d X_\th^\l(T)]|^{\bar{p}})\] \\
              & \leq C\bigg\{\dbE\[ (\G_\th(T))^{\frac{2}{p}}\]\bigg\}^{\frac{\bar{p}}{2}}\cd
                      \bigg\{\dbE\[\sup_{t\in[0,T]}|\d X_\th^\l(t)|^{\frac{2\bar{p}}{2-\bar{p}}}\]\bigg\}^{\frac{(2-\bar{p})}{2}}\\
              & \leq C \(K(\frac{2}{p}, \|\pa_zf_\th\cd W\|)\)^{\frac{\bar{p}}{2}}\bigg\{\dbE\[\sup_{t\in[0,T]}|\d X_\th^\l(t)|^{\frac{2\bar{p}}{2-\bar{p}}}\]\bigg\}^{\frac{(2-\bar{p})}{2}}
                      \ra 0,\q \text{as}\ \l\ra0.
             \end{aligned}
              \end{equation*}
  We can similarly deduce that, for all $p\in(1\vee 2p_{\pa_z f_\th}^{-1},2)$ and $p<\bar{p}<2$,
              \begin{equation*}
              \begin{aligned}
              & \lim_{\l\ra 0}\dbE\[(\G_\th(T))^\frac{\bar{p}}{p}|I^{2,\l}_\th|^{\bar{p}}
                +\(\int_{0}^T (\G_\th(t))^{\frac{1}{p}}|\pa_xf(t)\d X_\th^{\l}(t)+\pa_{x'}f(t)\dbE[\d X_\th^{\l}(t)]|dt\)^{\bar{p}}\]=0.
             \end{aligned}
              \end{equation*}

              Next, we focus on  $\dbE\[ \(\int_0^T (\G_\th(t))^{\frac{1}{p}}|D^\l_\th(t)|dt\)^{\bar{p}}\]$.
              Notice
              \begin{equation*}
              \begin{aligned}
              |C^\l_\th(t)-\pa_zf_\th(t)|
              &\leq C\l \( |\d X_\th^\l(t)+X_\th^1(t)|+|\dbE[\d X_\th^\l(t)+X_\th^1(t)]|\\
              &\q   +  |\d Y_\th^\l(t)+Y_\th^1(t)|+|\d Z_\th^\l(t)+Z_\th^1(t)|+|v(t)-\bar{v}(t)|\).
              \end{aligned}
              \end{equation*}
               We  only deal with the most difficult term
                $\dbE\[ \(\int_0^T (\G_\th(t))^{\frac{1}{p}}\cd |\bar{Z}_\th(t)|   \cd |Z_\th^1(t)|dt\)^{\bar{p}}\]$.
                The other terms can be estimated similarly.
                %
                Actually, by H\"{o}lder inequality, we have
                  \begin{equation*}
              \begin{aligned}
                &\dbE\[ \(\int_0^T (\G_\th(t))^{\frac{1}{p}}\cd |\bar{Z}_\th(t)|   \cd |Z_\th^1(t)|dt\)^{\bar{p}}\] \\
                %
                  %
                &\leq \bigg\{\dbE\[\int_0^T (\G_\th(t))^{\frac{2}{p}}\cd |\bar{Z}_\th(t)|^2  dt\]\bigg\}^{\frac{\bar{p}}{2}}
                  \bigg\{\dbE\[\(\int_0^T |Z_\th^1(t)|^2dt\)^\frac{{\bar{p}}}{2-\bar{p}}\bigg]\bigg\}^\frac{2-\bar{p}}{2}.
             \end{aligned}
              \end{equation*}
By H\"{o}lder inequality again, together with Doob's inequality and reverse H\"{o}ler inequality, we further have for
              fixed  $p_1\in(1\vee 2p_{\pa_z f_\th}^{-1},2)$ and any $p>p_1$,

               \begin{equation*}\label{3.11}
               \begin{aligned}
\dbE\[\int_0^T (\G_\th(t))^{\frac{2}{p}}\cd |\bar{Z}_\th(t)|^2  dt\]
               &\leq \bigg\{\dbE\[\sup_{t\in[0,T]}(\G_\th(t))^\frac{2}{p_1}\]\bigg\}^{\frac{p_1}{p}}
               \cd \bigg\{\dbE\[\( \int_0^T|\bar{Z}_\th(t)|^2dt\)^\frac{p}{p-p_1} \]   \bigg\}^{\frac{p-p_1}{p}}     \\
               &\leq \bigg\{\dbE\[(\G_\th(T))^\frac{2}{p_1}\]\bigg\}^{\frac{p_1}{p}}
               \cd \bigg\{\dbE\[\( \int_0^T|\bar{Z}_\th(t)|^2dt\)^\frac{p}{p-p_1} \]   \bigg\}^{\frac{p-p_1}{p}} <\i.
                \end{aligned}
               \end{equation*}
               The above and (\ref{3.6}) implies
               \begin{equation*}
              \begin{aligned}
                &\dbE\[ \(\int_0^T (\G^\l_\th(t))^{\frac{1}{p}}\cd |\bar{Z}_\th(t)|   \cd |Z_\th^1(t)|dt\)^{\bar{p}}\]<\i.
             \end{aligned}
              \end{equation*}
                The proof is complete.
                \end{proof}

               \begin{remark}\label{re 3.3}
                $\mathrm{ (i)}$\ From (\ref{3.7}), we also have
                \begin{equation*}\label{3.7-2}
              \begin{aligned}
                & \dbE\bigg[\sup_{t\in[0,T]}\G^\l_\th(t)|\d Y^{\l}_\th(t)  |^p
                +\(\int_{0}^{T}(\G^\l_\th(t))^\frac{2}{p}|\d Z^{\l}_\th(t) |^2dt\)^{\frac{p}{2}} \bigg]<\i,
             \end{aligned}
              \end{equation*}
              which implies $|\d Y^{\l}_\th(0)|<\i $.

              $\mathrm{ (ii)}$\ Taking $t=0$ it follows from \autoref{pro 3.2}  that,    for $p\in(1\vee 2p_{\pa_z f_\th}^{-1},2)$,
               \begin{equation*}
                 \lim\limits_{\l\ra 0}\sup\limits_{\th\in\Th} |\delta Y^\l_\th(0)|^p =0.
               \end{equation*}
               \end{remark}

               \begin{remark}\label{re 3.3-1}
               If the coefficient $f$ depends on the mean-field terms $\dbE[Y_s]$ and $\dbE[Z_s]$, there is an essential difficulty in the present method to prove the above \autoref{pro 3.2}. We leave it for further work.
               \end{remark}

\subsection{Variational inequality}
                 This subsection is devoted to studying variational inequality.
                  For given $v(\cd)\in\cV_{ad}$, define
                   \begin{equation*}
                   \begin{aligned}
                   \cQ^v  &  =\bigg\{Q\in \cQ \bigg| J(v(\cd))=\int_\Th\dbE\[
                           \phi_\th(X^v_\th(T))+\g_\th(Y_\th^v(0))\] Q(d\th)     \bigg\}.
                   \end{aligned}
                   \end{equation*}

\begin{lemma}\label{le 3.7}\rm
Under \autoref{ass 11}, $\cQ^v$ is nonempty.
\end{lemma}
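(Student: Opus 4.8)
The plan is to recognize that the nonemptiness of $\cQ^v$ is exactly the statement that the supremum defining $J(v(\cd))$ is attained. Set
$$g(\th):=\dbE\[\phi_\th(X^v_\th(T))+\g_\th(Y^v_\th(0))\],\q \th\in\Th,$$
so that $J(v(\cd))=\sup_{Q\in\cQ}\int_\Th g(\th)\,Q(d\th)$. I would prove that $g$ is bounded and continuous on $\Th$, and then exploit the weak compactness of $\cQ$ to conclude that the linear functional $Q\mapsto\int_\Th g\,dQ$ attains its maximum over $\cQ$; any maximizer $Q^*$ then lies in $\cQ^v$. Boundedness of $g$ is immediate from \autoref{ass 11}(i) and (iii), since $\phi_\th$ and $\g_\th$ are uniformly bounded.

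The main work is the continuity of $g$ in $\th$. First I would write, for $\th,\tilde{\th}\in\Th$,
$$g(\th)-g(\tilde{\th})=\dbE\[\phi_\th(X^v_\th(T))-\phi_{\tilde{\th}}(X^v_{\tilde{\th}}(T))\]
+\dbE\[\g_\th(Y^v_\th(0))-\g_{\tilde{\th}}(Y^v_{\tilde{\th}}(0))\],$$
and treat each expectation by inserting a mixed term. For the $\phi$-term, splitting into $\phi_\th(X^v_\th(T))-\phi_\th(X^v_{\tilde{\th}}(T))$ plus $\phi_\th(X^v_{\tilde{\th}}(T))-\phi_{\tilde{\th}}(X^v_{\tilde{\th}}(T))$, the second piece is bounded by $L_2\,d(\th,\tilde{\th})$ through \autoref{ass 11}(iv), while the first piece is controlled via the local Lipschitz bound of \autoref{ass 11}(ii) and the Cauchy--Schwarz inequality,
$$\dbE\big|\phi_\th(X^v_\th(T))-\phi_\th(X^v_{\tilde{\th}}(T))\big|
\leq L_1\Big\{\dbE\big(1+|X^v_\th(T)|+|X^v_{\tilde{\th}}(T)|\big)^2\Big\}^{\frac12}
\Big\{\dbE|X^v_\th(T)-X^v_{\tilde{\th}}(T)|^2\Big\}^{\frac12}.$$
Here the first factor is bounded uniformly in $\th$ by the moment estimate in \autoref{th 2.3}, and the second factor vanishes as $d(\th,\tilde{\th})\to0$ by \autoref{pro 2.3}(i). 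The $\g$-term is handled the same way, using the uniform Lipschitz continuity of $\g_\th$ in $y$ from \autoref{ass 11}(iii), the estimate $|\g_\th(y)-\g_{\tilde{\th}}(y)|\leq L_2\,d(\th,\tilde{\th})$ from \autoref{ass 11}(iv), and the convergence of $\dbE|Y^v_\th(0)-Y^v_{\tilde{\th}}(0)|$ furnished by \autoref{pro 2.3}(ii). Combining these yields $\sup_{d(\th,\tilde{\th})\leq\epsilon}|g(\th)-g(\tilde{\th})|\to0$ as $\epsilon\to0$, so $g$ is continuous (indeed uniformly).

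Once $g$ is a bounded continuous function on $\Th$, the map $Q\mapsto\int_\Th g\,dQ$ is continuous for the topology of weak convergence of probability measures, by the very definition of that topology. Since $\cQ$ is weakly compact by \autoref{ass 1}(v), this continuous functional attains its supremum at some $Q^*\in\cQ$, which means $J(v(\cd))=\int_\Th g\,dQ^*$ and hence $Q^*\in\cQ^v$. I expect the only real obstacle to be the continuity step, and within it the $\phi$-term: because $\phi_\th$ is merely locally Lipschitz (its modulus grows linearly in the state), a direct Lipschitz bound fails and one must combine the uniform $L^2$-moment estimates of \autoref{th 2.3} with the $\th$-continuity of \autoref{pro 2.3} through Cauchy--Schwarz.
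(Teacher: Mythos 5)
Your proposal is correct and follows essentially the same route as the paper: both arguments rest on showing that $\th\mapsto\dbE[\phi_\th(X^v_\th(T))+\g_\th(Y^v_\th(0))]$ is bounded and continuous (via \autoref{th 2.3}, \autoref{pro 2.3} and \autoref{ass 11}) and then invoking the weak compactness of $\cQ$, the only cosmetic difference being that the paper extracts a weak limit from a maximizing sequence $Q^n$ while you appeal directly to attainment of the supremum of a weakly continuous functional on a compact set. Your detailed verification of the continuity of $g$, including the Cauchy--Schwarz treatment of the locally Lipschitz $\phi$-term, is a correct elaboration of what the paper leaves implicit.
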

\begin{proof}
                  From the definition of $J(v(\cd))$, there exists a sequence $Q^n\in \cQ$ such that, for all
                  $v(\cd)\in\cV_{ad}$,
                   \begin{equation*}
                   \int_\Th\dbE\[\phi_\th(X^v_\th(T))+\g_\th(Y_\th^v(0))\] Q^n(d\th)\geq J(v(\cdot))-\frac{1}{n}.
                   \end{equation*}
                  Since $\cQ$ is weakly compact, there is some $Q^v\in \cQ$ such that a subsequence of $Q^n$ (if necessary) converges weakly to $Q^v$. Thanks to \autoref{th 2.3}, \autoref{pro 2.3}  and \autoref{ass 11},
             the mapping $\th\mapsto \dbE[\phi_\th(X^v_\th(T))+\g_\th(Y_\th^v(0))]$ is continuous and bounded. Hence, we obtain
             $$
             \begin{aligned}
J(v(\cd))&\geq\int_{\Th}\dbE\[\phi_\th(X^v_\th(T))+\g_\th(Y_\th^v(0))\] Q^v(d\th)\\
             &=\lim_{n\ra\i}\int_\Th \dbE\[\phi_\th(X^v_\th(T))+\g_\th(Y_\th^v(0))\] Q^n(d\th)\geq J(v(\cd)).
             \end{aligned}
             $$
             Thereby,  $\cQ^v$ is nonempty.
\end{proof}

\ms

                We now give the variational inequality.
                   \begin{theorem}\label{le 3.4}\rm
                    Under \autoref{ass 1} and \autoref{ass 11}, we have
                    \begin{equation*}
             \begin{aligned}
              \lim_{\l\ra0}\frac{1}{\l}\(J(v^\l(\cd))-J(\bar{v}(\cd)) \)
              =\sup_{Q\in \cQ^{\bar{v}}}\int_\Th\dbE\[\pa_x\phi_\th(\bar{X}_\th(T))X^{1}_\th(T) +\pa_y\gamma_\th(\bar{Y}_\th(0)) Y^{1}_\th(0) \]Q(d\th).
             \end{aligned}
             \end{equation*}
              \end{theorem}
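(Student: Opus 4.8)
The plan is to separate the directional derivative of the \emph{inner} functional from the outer supremum over $\cQ$, and to handle the supremum by a weak‑compactness argument. For $v(\cd)\in\cV_{ad}$ write $g^v_\th:=\dbE[\phi_\th(X^v_\th(T))+\g_\th(Y^v_\th(0))]$ and $\cJ^v(Q):=\int_\Th g^v_\th\,Q(d\th)$, so that $J(v(\cd))=\sup_{Q\in\cQ}\cJ^v(Q)$. Since $X^\l_\th(T)-\bar X_\th(T)=\l(X^1_\th(T)+\delta X^\l_\th(T))$ and $Y^\l_\th(0)-\bar Y_\th(0)=\l(Y^1_\th(0)+\delta Y^\l_\th(0))$, a first‑order Taylor expansion with integral remainder gives
\[
\begin{aligned}
D^\l_\th:=\frac1\l\big(g^{v^\l}_\th-g^{\bar v}_\th\big)
&=\dbE\Big[\int_0^1\pa_x\phi_\th\big(\bar X_\th(T)+r\l(X^1_\th(T)+\delta X^\l_\th(T))\big)dr\,(X^1_\th(T)+\delta X^\l_\th(T))\Big]\\
&\quad+\dbE\Big[\int_0^1\pa_y\g_\th\big(\bar Y_\th(0)+r\l(Y^1_\th(0)+\delta Y^\l_\th(0))\big)dr\,(Y^1_\th(0)+\delta Y^\l_\th(0))\Big].
\end{aligned}
\]
Using the boundedness and Lipschitz continuity of $\pa_x\phi_\th,\pa_y\g_\th$ (\autoref{ass 11}), the uniform moment bounds (\ref{3.2}), (\ref{3.6}) on $X^1_\th,Y^1_\th$, together with \autoref{le 3.1}(ii) (which gives $\sup_\th\dbE[|\delta X^\l_\th(T)|^p]\to0$) and \autoref{re 3.3}(ii) (which gives $\sup_\th|\delta Y^\l_\th(0)|\to0$, the quantity being deterministic), I would establish the \emph{uniform} convergence
\[
\lim_{\l\to0}\sup_{\th\in\Th}\big|D^\l_\th-I_\th\big|=0,\qquad
I_\th:=\dbE\big[\pa_x\phi_\th(\bar X_\th(T))X^1_\th(T)+\pa_y\g_\th(\bar Y_\th(0))Y^1_\th(0)\big].
\]

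For the lower bound, fix any $Q\in\cQ^{\bar v}$, which is nonempty by \autoref{le 3.7}. Since $\cJ^{\bar v}(Q)=J(\bar v)$ while $J(v^\l)\ge\cJ^{v^\l}(Q)$, subtracting gives $\tfrac1\l(J(v^\l)-J(\bar v))\ge\int_\Th D^\l_\th\,Q(d\th)$, and letting $\l\to0$ (using the uniform convergence above and $Q(\Th)=1$) yields $\liminf_{\l\to0}\tfrac1\l(J(v^\l)-J(\bar v))\ge\int_\Th I_\th\,Q(d\th)$. Taking the supremum over $Q\in\cQ^{\bar v}$ gives the ``$\ge$'' half of the claim.

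For the upper bound, for each $\l$ choose (again by \autoref{le 3.7}) a maximizer $Q^\l\in\cQ^{v^\l}$, so that $\cJ^{v^\l}(Q^\l)=J(v^\l)$ and $J(\bar v)\ge\cJ^{\bar v}(Q^\l)$; hence $\tfrac1\l(J(v^\l)-J(\bar v))\le\int_\Th D^\l_\th\,Q^\l(d\th)=\int_\Th I_\th\,Q^\l(d\th)+o(1)$ by the uniform convergence. Pick $\l_n\to0$ realizing the $\limsup$ of the right‑hand side and, by weak compactness of $\cQ$, a further subsequence along which $Q^{\l_{n_k}}$ converges weakly to some $Q^*\in\cQ$. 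By \autoref{pro 2.3}, \autoref{pro 3.1-1}, \autoref{pro 3.3} and the $\th$‑continuity built into \autoref{ass 1} and \autoref{ass 11}, both $g^{\bar v}_\th$ and $I_\th$ are bounded and continuous in $\th$; since moreover $g^{v^\l}_\th\to g^{\bar v}_\th$ uniformly in $\th$, passing to the limit in $J(v^{\l_{n_k}})=\int_\Th g^{v^{\l_{n_k}}}_\th\,Q^{\l_{n_k}}(d\th)$ yields $\lim_k J(v^{\l_{n_k}})=\int_\Th g^{\bar v}_\th\,Q^*(d\th)=\cJ^{\bar v}(Q^*)$. On the other hand $J(v^\l)\to J(\bar v)$, because $|J(v^\l)-J(\bar v)|\le\l\sup_\th|D^\l_\th|$; consequently $\cJ^{\bar v}(Q^*)=J(\bar v)$, i.e.\ $Q^*\in\cQ^{\bar v}$. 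Therefore $\limsup_{\l\to0}\int_\Th I_\th\,Q^\l(d\th)=\int_\Th I_\th\,Q^*(d\th)\le\sup_{Q\in\cQ^{\bar v}}\int_\Th I_\th\,Q(d\th)$, which is the ``$\le$'' half. Combining the two halves proves the equality.

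The main obstacle is precisely this last step, the robust feature of the problem: the maximizing measures $Q^\l$ attached to $v^\l$ vary with $\l$ and need not converge, so one must pass to a weakly convergent subsequence and, above all, identify its limit as a maximizer for $\bar v$. This identification is where the weak‑convergence technique is essential, and it succeeds only because the uniform (in $\th$) convergence $g^{v^\l}_\th\to g^{\bar v}_\th$ can be combined with the weak convergence of $Q^\l$, which in turn relies on the $\th$‑continuity of $g^{\bar v}_\th$ and of $I_\th$ supplied by the earlier continuity results.
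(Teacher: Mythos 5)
Your proposal is correct and follows essentially the same route as the paper: a first-order Taylor expansion giving uniform (in $\th$) convergence of the difference quotients, the lower bound obtained by testing against any fixed $Q\in\cQ^{\bar v}$, and the upper bound obtained by choosing maximizers $Q^\l\in\cQ^{v^\l}$, extracting a weakly convergent subsequence, and identifying the weak limit as an element of $\cQ^{\bar v}$ via the continuity in $\th$ of the relevant functionals. The only cosmetic difference is your cleaner one-line bound $|J(v^\l)-J(\bar v)|\leq\l\sup_{\th}|D^\l_\th|$ for showing $J(v^\l)\to J(\bar v)$, which the paper instead derives through the specific measures $Q^{\l_n}$.
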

               \begin{proof}
                The proof is split into  three steps.

               \ \textbf{Step 1.}  We show
                 \begin{equation}\label{3.13}
             \begin{aligned}
              \liminf_{\l\ra0}\frac{1}{\l}\(J(v^\l(\cd))-J(\bar{v}(\cd)) \)
              \geq\sup_{Q\in \cQ^{\bar{v}}}\int_\Th\dbE\[\pa_x\phi_\th(\bar{X}_\th(T))X^{1}_\th(T) +\pa_y\gamma_\th(\bar{Y}_\th(0)) Y^{1}_\th(0) \]Q(d\th).
             \end{aligned}
             \end{equation}

\ms

               From the definition of  $\cQ^{\bar{v}}$, it yields that,  for any $Q\in\cQ^{\bar{v}}$,
               \begin{equation*}
                \begin{aligned}
                 J(v^\l(\cd))&\geq \int_\Th  \dbE\[\phi_\th(X^\l_\th(T))+\gamma_\th(Y_\th^\l(0))\] Q(d\th),\\
                 J(\bar{v} (\cd))&= \int_\Th  \dbE\[\phi_\th(\bar{X}_\th(T))+\gamma_\th(\bar{Y} _\th(0))\] Q(d\th).
                \end{aligned}
               \end{equation*}
               Consequently,  for any $Q\in\cQ^{\bar{v}}$,
                \begin{equation*}
                \begin{aligned}
                & \frac{1}{\l}\(J(v^\l(\cd))-J(\bar{v} (\cd))\)
                  \geq \int_\Th\dbE\[ \pa_x\phi_\th(\bar{X}_\th(T))X^1_\th(T)+ \pa_y\g_\th(\bar{Y}_\th(0))Y^1_\th(0)\]Q(d\th)
                +J_1^\l+J_2^\l,
                \end{aligned}
                \end{equation*}
                where
                \begin{equation}\label{3.14}
                \begin{aligned}
                 &J_1^\l=\int_\Th \dbE\[ \int_0^1 \pa_x\phi_\th^{\l\rho}(T)d\rho\cd\ \d X_\th^\l(T)
                                          + \int_0^1 \pa_y\g_\th^{\l\rho}(0)d\rho\cd\ \d Y_\th^\l(0)\]Q(d\th),\\
                 &J_2^\l= \int_\Th \dbE\[ \int_0^1 \(\pa_x\phi_\th^{\l\rho}(T)-\pa_x\phi_\th(\bar{X}_\th(T))\)d\rho\cd\  X_\th^1(T)\\
                        &\qq \  + \int_0^1 \(\pa_y\g_\th^{\l\rho}(0)-\pa_y\g_\th(\bar{Y}_\th(0)\) d\rho\cd\ Y_\th^1(0)\]Q(d\th),\\
                 & \pa_x\phi_\th^{\l\rho}(T)= \pa_x \phi_\th(\bar{X}_\th(T)+\l\rho(X^1_\th(T)+\d X^\l_\th(T))),\\
                   &\pa_y\g_\th^{\l\rho}(0)= \pa_y \g_\th(\bar{Y}_\th(0)+\l\rho(Y^1_\th(0)+\d Y^\l_\th(0))).
                \end{aligned}
                \end{equation}

                On the one hand,  since
                $|\int_0^1 \pa_x\phi_\th^{\l\rho}(T)d\rho|\leq  L_1(1+|\bar{X}_\th(T)|+|X^1_\th(T)|+|\d X^\l_\th(T)|)$
                and $|\pa_y\g_\th^{\l\rho}(0)|\leq C$, it follows that, for any $\th\in \Th$,
                 \begin{equation*}
                \begin{aligned}
                 &\dbE\[ \int_0^1 \pa_x\phi_\th^{\l\rho}(T)d\rho\cd\ \d X_\th^\l(T)
                                          + \int_0^1 \pa_y\g_\th^{\l\rho}(0)d\rho\cd\ \d Y_\th^\l(0)\]\\
                 & \leq C  \bigg\{\dbE\[ \sup_{t\in[0,T]}(1+|\bar{X}_\th(t)|^2+|X^1_\th(t)|^2+|\d X^\l_\th(t)|^2)\]  \bigg\}^\frac{1}{2}\cd\
                            \bigg\{\dbE\[\sup_{t\in[0,T]}|\d X_\th^\l(t)|^2\]\bigg\}^\frac{1}{2}
                                          + |\d Y_\th^\l(0)|.
                \end{aligned}
                \end{equation*}
              Making use of  (\ref{2.1-0}), (\ref{3.2}), \autoref{le 3.1} and \autoref{pro 3.2} (or \autoref{re 3.3}) we have
                \begin{equation*}
                \begin{aligned}
                0\leq \lim_{\l\ra0} |J_1^\l|
                \leq \lim_{\l\ra0} \sup_{\th\in \Th}\dbE\[ \int_0^1 \pa_x\phi_\th^{\l\rho}(T)d\rho\cd\ \d X_\th^\l(T)
                                          + \int_0^1 \pa_y\g_\th^{\l\rho}(0)d\rho\cd\ \d Y_\th^\l(0)\]=0.
                \end{aligned}
                \end{equation*}
            Moreover, based on (\ref{3.2}), \autoref{le 3.1}, (\ref{3.6}), \autoref{re 3.3}  and the assumption that
               $\pa_y\g_\th$ is Lipschitz continuous with respect to $y$ uniformly in $\th$, we have
            \begin{equation*}
                \begin{aligned}
                |J_2^\l|&\leq\sup_{\th\in\Th}   \dbE\[ |\int_0^1 (\pa_x\phi_\th^{\l\rho}(T)-\pa_x\phi_\th(\bar{X}_\th(T)))d\rho|\cd\   |X_\th^1(T)|\] \\
                        &\qq   + \int_0^1 \sup_{\th\in\Th}|\pa_y\g_\th^{\l\rho}(0)-\pa_y\g_\th(\bar{Y}_\th(0))| d\rho\cd\ \sup_{\th\in\Th}|Y_\th^1(0)|\\
                      &\leq  C\l
                      \dbE\[ \sup_{t\in[0,T]} |X_\th^1(t)|^2+ \sup_{t\in[0,T]} |\d X_\th^\l(t)|^2 \] \\
                       &\q    + \int_0^1 \sup_{\th\in\Th}|\pa_y\g_\th^{\l\rho}(0)-\pa_y\g_\th(\bar{Y}_\th(0))| d\rho\cd\ \sup_{\th\in\Th}|Y_\th^1(0)|\ra 0, \q \text{as}\ \l\ra0.
                \end{aligned}
                \end{equation*}
            From the above estimate,  (\ref{3.13}) follows immediately.

             \ms

            \textbf {Step 2.}  Find a subsequence $\l_n\rightarrow0$ such that
            \begin{equation*}
            \begin{aligned}
              \limsup_{\l\ra0}\frac{J(v^{\l}(\cd))-J(\bar{v}(\cd))}{\l}=\lim_{n\ra\i}\frac{J(v^{\l_n}(\cd))-J(\bar{v}(\cd))}{\l_n}.
             \end{aligned}
             \end{equation*}
             For each  $n\in \dbN$, due to $\cQ^{v^{\l_n}}$ being nonempty,  there exists a probability measure $Q^{\l_n}\in \cQ^{v^{\l_n}}$
             such that
             \begin{equation*}
                \begin{aligned}
                & J(v^{\l_n}(\cd))=\int_\Th \dbE\[ \phi_\th(X^{\l_n}_\th(T))+\gamma_\th(Y_\th^{\l_n}(0))\] Q^{\l_n}(d\th),\\
                & J(\bar{v} (\cd))\geq \int_\Th \dbE\[ \phi_\th(\bar{X}_\th(T))+\gamma_\th(\bar{Y} _\th(0))\] Q^{\l_n}(d\th).
                \end{aligned}
               \end{equation*}
             To see this,  first notice
             \begin{equation}\label{3.16}
                \begin{aligned}
                \frac{J(v^{\l_n}(\cd))-J(\bar{v} (\cd)) }{\l_n}
                &\leq \int_\Th   \frac{\dbE[ \phi_\th(X^{\l_n}_\th(T))- \phi_\th(\bar{X}_\th(T))]
                                  +\gamma_\th(Y_\th^{\l_n}(0))-\gamma_\th(\bar{Y} _\th(0)) } {\l_n} Q^{\l_n}(d\th)\\
               &= \int_\Th\dbE\[ \pa_x\phi_\th(\bar{X}_\th(T))X^1_\th(T)+ \pa_y\g_\th(\bar{Y}_\th(0))Y^1_\th(0)\]Q^{\l_n}(d\th)
                +J_1^{\l_n}+J_2^{\l_n},
                \end{aligned}
               \end{equation}
              where $J_1^{\l_n}, J_2^{\l_n}$ are given in (\ref{3.14}). Similar to \textbf{Step 1}, we can show
              \begin{equation*}
              \lim_{n\ra\i} \(|J_1^{\l_n}|+|J_2^{\l_n}|\)=0.
              \end{equation*}
             Since $\cQ$ is weakly compact, there exists a probability measure $\h Q\in \cQ$ such that the sequence $(Q^{\l_n})_{n\in\dbN}$
             (choosing its subsequence if necessary) converges weakly to $\h Q$.
           In addition, according to \autoref{th 2.3}, \autoref{pro 2.3}, \autoref{pro 3.1-1}, \autoref{pro 3.3}, (\ref{3.2}), (\ref{3.6}) and
           \autoref{ass 11},
            we have that  the mapping $\th\mapsto\dbE[ \pa_x\phi_\th(\bar{X}_\th(T))X^1_\th(T)+ \pa_y\g_\th(\bar{Y}_\th(0))Y^1_\th(0)]$ is continuous and bounded. As $n\ra\i$ in (\ref{3.16}), it yields
           \begin{equation*}
                \begin{aligned}
                &\limsup_{\l\ra0}\frac{J(v^{\l}(\cd))-J(\bar{v}(\cd))}{\l}= \lim_{n\ra\i}\frac{J(v^{\l_n}(\cd))-J(\bar{v} (\cd)) }{\l_n}\\
                &\leq  \int_\Th\dbE\[ \pa_x\phi_\th(\bar{X}_\th(T))X^1_\th(T)+ \pa_y\g_\th(\bar{Y}_\th(0))Y^1_\th(0)\]\widehat{Q} (d\th).
                \end{aligned}
               \end{equation*}

\ms

           \textbf{Step 3.}  We prove $\h Q \in \cQ^{\bar{v}}$. For this, we need to show
           \begin{equation*}
                \begin{aligned}
                J(\bar{v}(\cd))=\int_\Th\dbE [ \phi_\th(\bar{X}_\th(T))+\gamma_\th(\bar{Y} _\th(0))]\widehat{Q} (d\th).
                \end{aligned}
               \end{equation*}
              In fact, since  $|\pa_x\phi_\th^{\l_n\rho}(T)|\leq L_1(1+|\bar{X}_\th(T)|+|X^1_\th(T)|+|\d X^\l_\th(T)|)$ and
              $|\pa_y\g_\th^{\l_n\rho}(0)|\leq C$, it follows from (\ref{2.1-0}), (\ref{3.2}), \autoref{le 3.1}, (\ref{3.6}) and
              \autoref{re 3.3} that, for all $\th\in \Th$,
              \begin{equation*}
                \begin{aligned}
                 & \Big|\dbE[ \phi_\th(X^{\l_n}_\th(T))-\phi_\th(\bar{X}_\th(T))]
                   +\gamma_\th(Y_\th^{\l_n}(0))-\gamma_\th(\bar{Y} _\th(0))  \Big|\\
                 &= \l_n \Big|\dbE\[\int_0^1 \pa_x\phi_\th^{\l_n\rho}(T) d\rho\cd (X_\th^1(T)+\d X_\th^{\l_n}(T))\]   \Big|
                     +\l_n \Big| \int_0^1 \pa_y\g_\th^{\l_n\rho}(0)d\rho\cd (Y_\th^1(0)+\d Y_\th^{\l_n}(0))  \Big|\\
                 &\leq \l_n \bigg\{\dbE\[1+ |\bar{X}_\th(T)|^2+|X^1_\th(T)|^2+|\d X^{\l_n}_\th(T)|^2 \]
                    +|Y_\th^1(0)+\d Y_\th^{\l_n}(0)|\bigg\}
                \ra 0, \q \text{as}\q \l_n\ra0.
                \end{aligned}
               \end{equation*}
             Above inequality leads to
             \begin{equation*}
                \begin{aligned}
                              &  \lim_{n\ra \i}|J(v^{\l_n}(\cd))-J(\bar{v}(\cd))|\\
                              & \leq\lim_{n\ra \i} \int_{\Th} \Big|\dbE [ \phi_\th(X^{\l_n}_\th(T))-\phi_\th(\bar{X}_\th(T))]\Big|
                                +\Big|\gamma_\th(Y_\th^{\l_n}(0))-\gamma_\th(\bar{Y} _\th(0))\Big|Q^{\l_n}(d\th) \\
                              &  \leq \lim_{n\ra \i}\sup_{\th\in \Th}\dbE\[ \Big|\phi_\th(X^{\l_n}_\th(T))-\phi_\th(\bar{X}_\th(T))\Big|\]
                                +\Big|\gamma_\th(Y_\th^{\l_n}(0))-\gamma_\th(\bar{Y} _\th(0))  \Big|=0
                \end{aligned}
               \end{equation*}
            and
               \begin{equation*}
                \begin{aligned}
               &\lim_{n\ra\i} \Big|\int_{\Th} \( \dbE [ \phi_\th(X^{\l_n}_\th(T)) +\gamma_\th(Y_\th^{\l_n}(0))]
               - \dbE [ \phi_\th(\bar{X}_\th(T))+\gamma_\th(\bar{Y} _\th(0))] \) Q^{\l_n}(d\th)\Big|\\
               &\leq\lim_{n\ra\i} \int_{\Th}  \Big|\dbE [ \phi_\th(X^{\l_n}_\th(T))-\phi_\th(\bar{X}_\th(T))]\Big|
                                +\Big|\gamma_\th(Y_\th^{\l_n}(0))-\gamma_\th(\bar{Y} _\th(0))\Big|  Q^{\l_n}(d\th)\\
               &\leq \lim_{n\ra \i}\sup_{\th\in \Th}\bigg\{\dbE\[ \Big|\phi_\th(X^{\l_n}_\th(T))-\phi_\th(\bar{X}_\th(T))\Big|\]
                                + \Big|\gamma_\th(Y_\th^{\l_n}(0))-\gamma_\th(\bar{Y} _\th(0))  \Big|\bigg\}=0.
                \end{aligned}
               \end{equation*}
               Consequently, it yields
                \begin{equation*}
                \begin{aligned}
               J(\bar{v}(\cd))&=\lim_{n\ra\i}J(v^{\l_n}(\cd))
                =\lim_{n\ra\i}\int_{\Th}\dbE [ \phi_\th(X^{\l_n}_\th(T)) +\gamma_\th(Y_\th^{\l_n}(0))] Q^{\l_n}(d\th)\\
               & =\lim_{n\ra\i}\int_{\Th}\dbE [ \phi_\th(\bar{X}_\th(T))+\gamma_\th(\bar{Y} _\th(0))]Q^{\l_n}(d\th)
                 =\int_{\Th}\dbE [ \phi_\th(\bar{X}_\th(T))+\gamma_\th(\bar{Y} _\th(0))]\h{Q} (d\th).
                \end{aligned}
               \end{equation*}
\end{proof}

                From  \autoref{le 3.4}, we have the main theorem in this section.
               \begin{theorem}\label{th 3.4}\rm
                 Under \autoref{ass 1} and \autoref{ass 11}, there exists a probability $\bar{Q}\in \cQ^{\bar{v}}$ such that, for
                 any $v(\cd)\in \cV_{0,T}$,
               \begin{equation*}
             \begin{aligned}
             &\int_\Th\dbE\[\pa_x\phi_\th(\bar{X}_\th(T))X^{1}_\th(T)+\pa_y\gamma_\th(\bar{Y}_\th(0)) Y^{1}_\th(0) \]\bar Q(d\th)\geq0.
             \end{aligned}
             \end{equation*}
               \end{theorem}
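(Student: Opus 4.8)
The plan is to deduce the assertion from \autoref{le 3.4} combined with a minimax argument. For $v(\cd)\in\cV_{ad}$ set $u:=v-\bar v$ and write
\begin{equation*}
\cI(v,Q):=\int_\Th\dbE\[\pa_x\phi_\th(\bar X_\th(T))X^{1}_\th(T)+\pa_y\g_\th(\bar Y_\th(0))Y^{1}_\th(0)\]Q(d\th),
\end{equation*}
where $X^1_\th,Y^1_\th$ solve the variational equations \eqref{3.1}--\eqref{3.5} driven by the direction $v-\bar v$, so that the desired conclusion is precisely $\cI(v,\bar Q)\ges0$ for all $v$. By \autoref{le 3.4} and the optimality of $\bar v$ (which gives $J(v^\l(\cd))\ges J(\bar v(\cd))$ for every $\l\in(0,1)$),
\begin{equation*}
\sup_{Q\in\cQ^{\bar v}}\cI(v,Q)=\lim_{\l\ra0}\frac{1}{\l}\(J(v^\l(\cd))-J(\bar v(\cd))\)\ges0\qquad\text{for each }v(\cd)\in\cV_{ad}.
\end{equation*}
The content of the theorem is that a \emph{single} $\bar Q$ may be chosen, uniformly in $v$; equivalently, that $\sup_{Q}$ and $\inf_{v}$ can be interchanged.

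First I would record the structural features required for a minimax theorem. Because the variational equations \eqref{3.1} and \eqref{3.5} are linear and homogeneous in the perturbation $u=v-\bar v$ (so that $X^1_\th=Y^1_\th=0$ when $u=0$), the maps $u\mapsto X^1_\th(T)$ and $u\mapsto Y^1_\th(0)$ are linear, whence $v\mapsto\cI(v,Q)$ is affine on the convex set $\cV_{ad}$. On the other side $Q\mapsto\cI(v,Q)$ is affine, and by \autoref{th 2.3}, \autoref{pro 2.3}, \autoref{pro 3.1-1}, \autoref{pro 3.3} together with \eqref{3.2} and \eqref{3.6} the map $\th\mapsto\dbE[\pa_x\phi_\th(\bar X_\th(T))X^1_\th(T)+\pa_y\g_\th(\bar Y_\th(0))Y^1_\th(0)]$ is continuous and bounded, so $Q\mapsto\cI(v,Q)$ is weakly continuous. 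Finally, $\cQ^{\bar v}$ is the set of maximizers over the weakly compact convex set $\cQ$ of the weakly continuous affine functional $Q\mapsto\int_\Th\dbE[\phi_\th(\bar X_\th(T))+\g_\th(\bar Y_\th(0))]Q(d\th)$; hence it is nonempty (\autoref{le 3.7}), weakly compact and convex, being a face of $\cQ$.

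With these facts I would apply Sion's minimax theorem to $(v,Q)\mapsto\cI(v,Q)$, which is quasiconvex and lower semicontinuous in $v$ (being affine) and quasiconcave and weakly upper semicontinuous in $Q$, over the convex set $\cV_{ad}$ and the weakly compact convex set $\cQ^{\bar v}$, to obtain
\begin{equation*}
\sup_{Q\in\cQ^{\bar v}}\inf_{v\in\cV_{ad}}\cI(v,Q)=\inf_{v\in\cV_{ad}}\sup_{Q\in\cQ^{\bar v}}\cI(v,Q).
\end{equation*}
The right-hand side is $\ges0$ by the lower bound displayed above, while taking $v=\bar v$ (so $u=0$ and $\cI(\bar v,Q)\equiv0$) shows it is also $\les0$; thus both sides equal $0$. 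Since $Q\mapsto\inf_{v}\cI(v,Q)$ is an infimum of weakly continuous functions, it is weakly upper semicontinuous and so attains its supremum over the weakly compact set $\cQ^{\bar v}$ at some $\bar Q$. For this $\bar Q$ we get $\inf_{v}\cI(v,\bar Q)=0$, i.e. $\cI(v,\bar Q)\ges0$ for every $v(\cd)\in\cV_{ad}$, as claimed; and $\bar Q\in\cQ^{\bar v}$ by construction.

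The main obstacle I anticipate is the rigorous verification of the hypotheses of Sion's theorem in this infinite-dimensional setting: the weak compactness and convexity of $\cQ^{\bar v}$ as a face of $\cQ$, and the choice of a topology on $\cV_{ad}$ rendering $v\mapsto\cI(v,Q)$ lower semicontinuous. One can sidestep the topology on the control space by a compactness-plus-finite-minimax argument: were no common $\bar Q$ to exist, the weakly open sets $\{Q\in\cQ^{\bar v}:\cI(v,Q)<0\}$ would cover $\cQ^{\bar v}$, a finite subcover would single out directions $v_1,\dots,v_m$, and the von Neumann minimax theorem applied to the bilinear form on the probability simplex times $\cQ^{\bar v}$ — using the affinity of $\cI$ in $v$ and the convexity of $\cV_{ad}$ — would contradict $\sup_{Q}\cI(v,Q)\ges0$. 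Either route reduces to the same linear duality, so the crux is genuinely the interchange of $\sup_{Q}$ and $\inf_{v}$.
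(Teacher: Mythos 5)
Your proposal follows essentially the same route as the paper: both start from the variational inequality of \autoref{le 3.4} together with optimality of $\bar v$, observe that $v\mapsto\cI(v,Q)$ is affine and continuous (by linearity of the variational equations) while $Q\mapsto\cI(v,Q)$ is weakly continuous, invoke Sion's minimax theorem to interchange $\inf_{v}$ and $\sup_{Q}$ over $\cV_{ad}\times\cQ^{\bar v}$, and then use weak compactness of $\cQ^{\bar v}$ to produce the common $\bar Q$. The only cosmetic difference is in the final extraction step: the paper takes $\bar Q$ as a weak limit of $\epsilon_n$-approximate maximizers $Q^{\epsilon_n}$, whereas you appeal to upper semicontinuity of $Q\mapsto\inf_{v}\cI(v,Q)$ to attain the supremum directly — these are equivalent.
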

               \begin{proof}
               For a more precise expression, we write $X_\th^1, Y_\th^1$ as  $X_\th^{1,v}, Y_\th^{1,v}$.
               It follows \autoref{le 3.4} that,
               for any $v(\cd)\in \cV_{ad}$,
              \begin{equation*}
             \begin{aligned}
             0\leq \lim_{\l\ra0}\frac{1}{\l}\(J(v^\l(\cd))-J(\bar{v}(\cd)) \)
              =\sup_{Q\in \cQ^{\bar{v}}}\int_\Th\dbE\[\pa_x\phi_\th(\bar{X}_\th(T))X^{1,v}_\th(T) +\pa_y\gamma_\th(\bar{Y}_\th(0)) Y^{1,v}_\th(0) \]Q(d\th),
             \end{aligned}
             \end{equation*}
              which implies
               \begin{equation*}
             \begin{aligned}
             \inf_{v(\cd)\in\cV_{ad}}\sup_{Q\in \cQ^{\bar{v}}}\int_\Th\dbE\[\pa_x\phi_\th(\bar{X}_\th(T))X^{1,v}_\th(T) +\pa_y\gamma_\th(\bar{Y}_\th(0)) Y^{1,v}_\th(0) \]Q(d\th)\geq0.
             \end{aligned}
             \end{equation*}

             Define $\Upsilon_\th^v:=\dbE\[\pa_x\phi_\th(\bar{X}_\th(T))X^{1,v}_\th(T) +\pa_y\gamma_\th(\bar{Y}_\th(0)) Y^{1,v}_\th(0) \]$.
             From the linearity of $X^{1,v}, Y^{1,v}_\th$ with respect to $v$, we know that $v\mapsto \Upsilon_\th^v$ is a  linear mapping, i.e.,
             for $v(\cd),v'(\cd)\in\cV_{ad}, 0<l<1$ and $\th\in\Th$,
             \begin{equation*}
             \begin{aligned}
             \Upsilon_\th^{l v+(1-l)v'}=l\Upsilon_\th^{v}+(1-l)\Upsilon_\th^{v'}.
             \end{aligned}
             \end{equation*}
             In addition, for $v(\cd),v'(\cd)\in\cV_{ad}$ and $\th\in\Th,$
             \begin{equation*}
             \begin{aligned}
|\Upsilon_\th^{v}-\Upsilon_\th^{v'}| &\leq    C\dbE\[(1+|\bar{X}_\th(T)|)|X_\th^{1,v}(T)-X_\th^{1,v'}(T)|\]+|Y_\th^{1,v}(0)-Y_\th^{1,v'}(0)|\\
             &\leq C\Big\{\dbE[1+|\bar{X}_\th(T)|^2]\Big\}^\frac{1}{2}
                   \cd\Big\{\dbE[|X_\th^{1,v}(T)-X_\th^{1,v'}(T)|^2]\Big\}^\frac{1}{2}
             +|Y_\th^{1,v}(0)-Y_\th^{1,v'}(0)|.
             \end{aligned}
             \end{equation*}
             This, together with (\ref{3.2}) and (\ref{3.6}), can show that  $\Upsilon_\th^{v}$ is continuous with respect to  $v$ uniformly in $\th$.
             Making use of Sion's minimax theorem, we obtain
               \begin{equation*}
             \begin{aligned}
             &\inf_{v(\cd)\in\cV_{ad}}\sup_{Q\in \cQ^{\bar{v}}}\int_\Th\dbE\[\pa_x\phi_\th(\bar{X}_\th(T))X^{1,v}_\th(T) +\pa_y\gamma_\th(\bar{Y}_\th(0)) Y^{1,v}_\th(0) \]Q(d\th)\\
             &=\sup_{Q\in \cQ^{\bar{v}}}\inf_{v(\cd)\in\cV_{ad}}\int_\Th\dbE\[\pa_x\phi_\th(\bar{X}_\th(T))X^{1,v}_\th(T) +\pa_y\gamma_\th(\bar{Y}_\th(0)) Y^{1,v}_\th(0) \]Q(d\th)\geq0.
             \end{aligned}
             \end{equation*}
             Consequently, for any $\epsilon>0$, there exists a $Q^\epsilon\in \cQ^{\bar{v}}$ such that
              \begin{equation*}
             \begin{aligned}
             \inf_{v(\cd)\in\cV_{ad}}\int_\Th\dbE\[\pa_x\phi_\th(\bar{X}_\th(T))X^{1,v}_\th(T) +\pa_y\gamma_\th(\bar{Y}_\th(0)) Y^{1,v}_\th(0) \]Q^\epsilon(d\th)\geq-\epsilon.
             \end{aligned}
             \end{equation*}
             Finally, the compactness of $\cQ^{\bar{v}}$ allows us to take a subsequence $\epsilon_n\ra 0$ such that
             $Q^{\epsilon_n}$ converges weakly to a probability measure $\bar Q\in \cQ^{\bar{v}}$ and, for all $v(\cd)\in\cV_{ad}$,
                \begin{equation*}
             \begin{aligned}
             & \int_\Th\dbE\[\pa_x\phi_\th(\bar{X}_\th(T))X^{1,v}_\th(T) +\pa_y\gamma_\th(\bar{Y}_\th(0)) Y^{1,v}_\th(0) \]\bar Q(d\th)\\
             &=\lim_{\epsilon_n\ra0}\int_\Th\dbE\[\pa_x\phi_\th(\bar{X}_\th(T))X^{1,v}_\th(T) +\pa_y\gamma_\th(\bar{Y}_\th(0)) Y^{1,v}_\th(0) \]Q^{\epsilon_n}(d\th)\geq0.
             \end{aligned}
             \end{equation*}
               \end{proof}

 \section{Necessary and sufficient maximum principles}\label{sec 4}

        The necessary and sufficient maximum principles are discussed in this section.
        For this, we introduce the adjoint equation:
         \begin{equation}\label{5.1}
         \left\{
                \begin{aligned}
               dp_\th(t) & = \pa_yf_\th(t)p_\th(t)dt + \pa_zf_\th(t)p_\th(t)dW(t),\ \ \ t\in[0,T],\\
              -dq_\th(t) & =\(-\pa_xf_\th(t)p_\th(t)+(\pa_xb_\th(t))^\top q_\th(t)
              +\sum_{i=1}^d(\pa^i_{x}\si_\th(t))^\top   r^i_\th(t) -\dbE[\pa_{x'}f_\th(t)p_\th(t)] \\
                         &\q  +\dbE[(\pa_{x'}b_\th(t))^{\top} q_\th(t)]\)dt-r_\th(t)dW(t),\ \ \ t\in[0,T],\\
               p_\th(0)  & =-\pa_y\gamma_\th(\bar Y_\th(0)),\\
               q_\th(T)  &=-(\pa_x\F_\th(\bar X_\th(T),\dbE[\bar X_\th(T)] ))^\top p_\th(T)
                             -\dbE\[\pa_{x'}\F_\th(\bar X_\th(T),\dbE[\bar X_\th(T)] ))^\top p_\th(T)    \]\\
                         & \q +\pa_x\phi_\th(\bar X_\th(T)).
                \end{aligned}
                \right.
               \end{equation}
               The forward equation in (\ref{5.1}) is a $1$-dimensional SDE with unbounded coefficient.

               The solvability of (\ref{5.1}) comes from the following lemma which is an immediate adaptation of \cite[Lemma 3 and Lemma 4]{Galchuk-1978}.
                \begin{lemma}\label{th 6.1}\rm
                 Assume
             $ \dbE\[\int_0^T |\varphi_1(t)|^pdt\]<\i, \dbE\(\int_0^T |\varphi_2(t)|^2dt\)^\frac{p}{2}<\i,$  and  $a_1\cd W$, $a_2\cd  W\in  \text{BMO}$.
            Then for $p>1$, the $1$-dimensional SDE with  unbounded coefficient below
            \begin{equation}\label{6.1}
            \left\{
            \begin{aligned}
                  dX(t) & = [a_1(t)X(t)+\varphi_1(t)]dt + [a_2(t)X(t)+\varphi_2(t)]dW(t),\  t\in[0,T],\\
                    X(0)  & =x_0\in \dbR.
             \end{aligned}
             \right.
             \end{equation}
 has  a unique solution  $X(\cd)\in\cS_\dbF^p([0,T];\dbR)$.
               \end{lemma}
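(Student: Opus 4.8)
The plan is to solve \eqref{6.1} explicitly by variation of constants and to read off existence, the $\cS^p$-bound, and uniqueness from the resulting closed formula, using the BMO hypotheses to control every integrability issue. First I would introduce the fundamental solution $\Phi(\cdot)$ of the homogeneous equation, i.e. the solution of $d\Phi(t)=a_1(t)\Phi(t)dt+a_2(t)\Phi(t)dW(t)$ with $\Phi(0)=1$, which is explicit:
\begin{equation*}
\Phi(t)=\cE\left(\int_0^{\cdot}a_2(s)dW(s)\right)(t)\,\exp\left(\int_0^t a_1(s)ds\right),
\end{equation*}
together with its inverse $\Phi^{-1}(t)=\cE\left(-\int_0^{\cdot}a_2(s)dW(s)\right)(t)\exp\left(\int_0^t(a_2(s)^2-a_1(s))ds\right)$. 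Applying It\^o's product rule to $\Phi^{-1}X$, in which the cross-variation term cancels the $a_2X$ part of the diffusion, shows that any solution must satisfy
\begin{equation*}
\Phi^{-1}(t)X(t)=x_0+\int_0^t\Phi^{-1}(s)\left(\varphi_1(s)-a_2(s)\varphi_2(s)\right)ds+\int_0^t\Phi^{-1}(s)\varphi_2(s)dW(s).
\end{equation*}
I would therefore take the process $X(t):=\Phi(t)\big[\,x_0+\int_0^t\Phi^{-1}(\varphi_1-a_2\varphi_2)ds+\int_0^t\Phi^{-1}\varphi_2\,dW\,\big]$ as the candidate solution and verify a posteriori, again by It\^o's formula, that it solves \eqref{6.1}.

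The core of the argument is the integrability of $\Phi$ and $\Phi^{-1}$. Since $a_2\cd W\in\mathrm{BMO}$, both $\pm a_2\cd W$ are BMO, hence $\cE(\pm a_2\cd W)$ are uniformly integrable martingales and, by the reverse H\"older inequality recalled in \autoref{sec 2}, belong to $L^q$ for every $q$ below the threshold fixed by $\|a_2\cd W\|_{\mathrm{BMO}}$; moreover the John--Nirenberg inequality furnishes finite exponential moments of $\int_0^T a_2(s)^2ds$, while the elementary bound $\int_0^T|a_1|ds\leq T^{1/2}(\int_0^T a_1^2ds)^{1/2}$ combined with John--Nirenberg applied to $a_1\cd W$ yields moments of all orders of $\exp(\int_0^T|a_1|ds)$. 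Using H\"older's inequality to combine these facts, I would obtain $\dbE[\sup_{t}\Phi(t)^r]<\i$ and $\dbE[\sup_t\Phi^{-1}(t)^r]<\i$ for exponents $r$ in a range strictly containing $p$. The $\cS^p$-estimate then follows by bounding $\sup_t|X(t)|\leq\sup_t\Phi(t)\cd\sup_t|B(t)|$, where $B$ is the bracket: the drift part of $B$ is controlled by $\sup_s\Phi^{-1}(s)\int_0^T(|\varphi_1|+|a_2\varphi_2|)ds$ and the stochastic integral by the Burkholder--Davis--Gundy inequality, which reduces $\dbE[\sup_t|\int_0^t\Phi^{-1}\varphi_2dW|^p]$ to $\dbE[(\int_0^T\Phi^{-2}\varphi_2^2ds)^{p/2}]\leq\dbE[(\sup_s\Phi^{-1}(s))^p(\int_0^T\varphi_2^2ds)^{p/2}]$; a final H\"older splitting separates the fundamental solution from the data $\varphi_1,\varphi_2$.

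Uniqueness is then immediate: the computation with $\Phi^{-1}X$ forces every $\cS^p$-solution to coincide with the explicit formula, so the solution is unique. The step I expect to be the main obstacle is precisely the integrability bookkeeping in the $\cS^p$-estimate. Because $\Phi$ and $\Phi^{-1}$ possess moments only up to the thresholds dictated by the BMO norms of $a_1\cd W$ and $a_2\cd W$, the H\"older exponents that pair the fundamental solution against the data must be chosen inside the reverse-H\"older range, and the exponential factors $\exp(\int a_2^2)$ and $\exp(\int|a_1|)$ must be absorbed through the John--Nirenberg inequality without destroying integrability. This quantitative balancing, rather than any conceptual difficulty, is the heart of the proof, and it is exactly where the sharp BMO estimates recorded in \autoref{sec 2} are used.
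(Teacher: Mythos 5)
Your route --- solving \eqref{6.1} in closed form by variation of constants --- is genuinely different from the paper's, which never writes down the fundamental solution: it instead introduces the stopping times
\begin{equation*}
\t_{n+1}=\inf\Big\{t>\t_n:\ \big(\textstyle\int_{\t_n}^t|a_2|^2ds\big)^{p/2}\ges 2^{-p-1}C_p^{-1}\ \text{ or }\ \textstyle\int_{\t_n}^t|a_1|^pds\ges 2^{-p-1}\Big\}\wedge T,
\end{equation*}
runs a Banach fixed point for the map $\G$ on each $\cS^p_\dbF([\t_n,\t_{n+1}];\dbR)$, and glues the pieces; the hypotheses are used only to guarantee $\int_0^T|a_1|^pds+\int_0^T|a_2|^2ds<\i$ a.s., so that a.s. finitely many such intervals cover $[0,T]$. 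Your formulas for $\Phi$ and $\Phi^{-1}$ and the identity $\Phi^{-1}X=x_0+\int_0^\cd\Phi^{-1}(\f_1-a_2\f_2)ds+\int_0^\cd\Phi^{-1}\f_2\,dW$ are correct, and they do give uniqueness cleanly.

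The gap is exactly where you predicted it, and it is not merely bookkeeping: the claim that $\dbE[\sup_t\Phi(t)^r]<\i$ and $\dbE[\sup_t\Phi^{-1}(t)^r]<\i$ for $r$ in a range strictly containing $p$ is simply not available for an arbitrary $p>1$. The reverse H\"older inequality yields $\cE(\pm a_2\cd W)_T\in L^q$ only for $q<p_{\pm a_2\cd W}$, a finite threshold that approaches $1$ as $\|a_2\cd W\|_{BMO}$ grows; and writing $\cE(M)^r=\cE(rM)\exp(\tfrac{r^2-r}{2}\lan M\ran)$ so as to invoke John--Nirenberg requires $\tfrac{r^2-r}{2}$ (times a H\"older conjugate) to stay below $\|a_2\cd W\|_{BMO}^{-2}$, which again fails for large $r$. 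There exist BMO martingales $M$ with $\cE(M)_T\notin L^p$ for $p$ above the reverse-H\"older exponent (see Kazamaki), and taking $\f_1=\f_2=0$, $a_1=0$, $x_0=1$ in \eqref{6.1} makes $X=\cE(a_2\cd W)$; so your argument can only deliver the conclusion for $p$ below a threshold determined by $\|a_2\cd W\|_{BMO}$, not for every $p>1$ as the lemma asserts (this example also shows the unrestricted statement is itself delicate). The paper's localization avoids taking any moment of the stochastic exponential, which is precisely how it sidesteps the threshold; and indeed, when the paper later does use your explicit representation $p_\th(t)=\pa_y\g_\th(\bar Y_\th(0))e^{\int_0^t\pa_yf_\th ds}\cE(\int_0^t\pa_zf_\th^\top dW)$ in \autoref{le 4.1}, it restricts to $p<p_{\pa_zf_\th}$ for exactly this reason. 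To salvage your approach you would either have to restrict $p$ accordingly or replace the global moment bounds on $\Phi,\Phi^{-1}$ by a localization of the same kind the paper uses.
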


                \begin{proof}

              Define
              \begin{equation*}
              \begin{aligned}
              \t_0=0,\q \t_{n+1}&=\inf\bigg\{t>\t_n: \(\int_{\t_n}^t|a_2(s)|^2ds\)^\frac{p}{2}\geq
              2^{-p-1} C_p^{-1} \q  \text{or}\
                \int_{\t_n}^t|a_1(s)|^pds\geq 2^{-p-1}
              \bigg\}\wedge T,
              \end{aligned}
              \end{equation*}
              where $C_p$ only depending on $p$ is the constant in   Buckholder-Davis-Gundy inequality.
              Clearly, $\t_n\uparrow T$,  as $n\uparrow \i$.

              The proof is split into two steps.

                \textbf{ Step 1.} We show that the equation (\ref{6.1}) possesses a unique solution $X_1(\cd)\in \cS_\dbF^p([\t_0,\t_1];\dbR)$
                on the interval $[\t_0, \t_1]$ with the fixed point theorem.
                Let $X\in \cS_{\dbF}^p([\t_0, \t_1];\dbR)$. Define
                \begin{equation*}\label{6.2}
                \left\{
                \begin{aligned}
                 \Gamma(X)(t)&:=x_0+\int_{\t_0}^{t}   \[a_1(s)  X(s)+\varphi_1(s)\]ds+\int_{\t_0}^{t}  \[a_2(s)  X(s)+\varphi_2(s)\]dW(s),\  t\in[\t_0,\t_1],\\
                    \Gamma(X)(\t_0) &:=x_0.
                    \end{aligned}
                    \right.
                \end{equation*}
                It is easy to check $ \Gamma(X)\in \cS_{\dbF}^p([\t_0, \t_1];\dbR)$.
                For  $X',  X''\in \cS_{\dbF}^p([\t_0, \t_1];\dbR)$, define
                \begin{equation*}
                \D  X= X'-X'', \q   \G(\D X)=\G(X')-\G(X'').
                \end{equation*}
                Then it follows that
                \begin{equation*}
                \left\{
                \begin{aligned}
                 \G(\D X)(t)&=\int_{\t_0}^t a_1(s)\D X(s)ds
                               +\int_{\t_0}^t a_2(s) \D X(s)dW(s), \q
                       t\in[\t_0,\t_1],\\
                  \G(\D X)(\t_0)&=0.
                     \end{aligned}
                     \right.
                \end{equation*}
                By Buckholder-Davis-Gundy inequality and H\"{o}lder inequality, we have, for $p>1$
                \begin{equation*}
                \begin{aligned}
                \dbE\Big[\sup_{t\in[\t_0,\t_1]}|\G(\D X)(t)|^p\Big]
                  & \leq 2^{p-1} \dbE\Big[\int_{\t_0}^{\t_1}|a_1(s)|^pds\cd\sup_{t\in [\t_0,\t_1]}|\D X(t)|^p\Big]\\
                    &\ \ \ +2^{p-1}C_p\dbE\Big[\(\int_{\t_0}^{\t_1}|a_2(s)|^2ds\)^{\frac{p}{2}}\cd\sup_{t\in [\t_0,\t_1]}|\D X(t)|^p\Big].
                   \end{aligned}
                \end{equation*}
                Recalling the definition of $\t_1$, we have
                \begin{equation*}
                \dbE\Big[\sup_{t\in[\t_0,\t_1]}|\G(\D X)(t)|^p\Big]\leq \frac{1}{2}\dbE\Big[\sup_{t\in [\t_0,\t_1]}|\D X(t)|^p\Big].                \end{equation*}
               By the fixed point theorem, it yields that SDE (\ref{6.1}) has a unique solution $X_1(\cd)\in \cS_{\dbF}^p([\t_0, \t_1];\dbR)$ on the interval $[\t_0,\t_1].$

                \textbf{ Step 2.} Assuming that a solution $X$ of (\ref{6.1}) has been constructed on $[\t_0,\t_n]$, we
                 construct it on $(\t_n, \t_{n+1}]$.
                %
                 For this, define
                \begin{equation}\label{6.6}
                \begin{aligned}
                  \G(X)(t):=X(\t_n)+\int_{\t_n}^t\[a_1(s)  X(s) +\varphi_1(s)\]ds+\int_{\t_n}^t \[a_2(s)   X(s)+\varphi_2(s)\]dW(s),\  t\in[\t_n,\t_{n+1}].
                    \end{aligned}
                \end{equation}

                 According to the definition of $\t_{n+1}$, similar to \textbf{Step 1}, we know that (\ref{6.6}) has a unique solution
                   $X_{n+1}\in \cS_{\dbF}^p([\t_n, \t_{n+1}];\dbR)$. Repeating this procedure time and again, we get a solution of (\ref{6.1}) on
                   $[0,T]$. Moreover, this solution is unique by the procedure of construction.
 \end{proof}

               According to \autoref{th 6.1}, the forward equation in (\ref{5.1}) has a unique strong solution
               $p(\cd)\in\cS_\dbF^p([0,T];\dbR)$ for $p>1$.
               The backward equation   in (\ref{5.1}) is a $n$-dimensional mean-field BSDE with bounded Lipschitz
               coefficient. According to \cite[ Theorem 3.4]{Chen-Xing-Zhang-2020},
                for $p>1$, it has a unique solution
               $(q(\cd),r(\cd))\in \cS_\dbF^p([0,T];\dbR^n)\ts \cH_\dbF^{2,p}([0,T];\dbR^{n\ts d})$.

                  The following lemma show that $p_\th$ is continuous with respect to $\th$.

                \begin{lemma}\label{le 4.1}\rm
                Under \autoref{ass 1}, for $1<p<(p_{\pa_z f_\th}\wedge p_{(\pa_z f_{\tilde{\th}}-\pa_z f_{\th})})$,
                \begin{equation*}
                \begin{aligned}
                &    \lim_{\epsilon\ra 0}\sup_{d(\th, \tilde{\th})\leq\epsilon }\dbE\bigg[\sup_{t\in[0,T]}|p_\th(t)-p_{\tilde{\th}}(t)|^p\bigg]=0.
                 \end{aligned}
                \end{equation*}
                \end{lemma}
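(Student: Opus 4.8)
The plan is to exploit the fact that the forward equation in \eqref{5.1} is a \emph{scalar} linear SDE, so that its solution is given in closed form by
\begin{equation*}
p_\th(t)=p_\th(0)\,\G_\th(t)\,\exp\Big\{\int_0^t\pa_yf_\th(s)\,ds\Big\},\qquad t\in[0,T],
\end{equation*}
where $\G_\th$ is the Dol\'eans--Dade exponential of $\int_0^\cd\pa_zf_\th(s)^\top dW(s)$ introduced in \eqref{3.9} and $p_\th(0)=-\pa_y\g_\th(\bar Y_\th(0))$ is deterministic. Since $|\pa_yf_\th|\leq C_1$ the drift factor is bounded by $e^{C_1T}$, and since $\pa_zf_\th\cd W$ is a BMO martingale with $\|\pa_zf_\th\cd W\|_{BMO}\leq M_2$ uniformly in $\th$ by \eqref{2.1-0}, the reverse H\"older inequality together with Doob's maximal inequality gives $\sup_\th\dbE[\sup_{t\in[0,T]}\G_\th(t)^{r}]\leq C$ for every $r$ below the (uniform) reverse-H\"older exponent, in particular for some $r>p$. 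This representation reduces the problem to controlling, in $L^p$, the three differences in the splitting $p_\th-p_{\tilde{\th}}=(p_\th(0)-p_{\tilde{\th}}(0))\G_\th E_\th+p_{\tilde{\th}}(0)(\G_\th-\G_{\tilde{\th}})E_\th+p_{\tilde{\th}}(0)\G_{\tilde{\th}}(E_\th-E_{\tilde{\th}})$, where $E_\th(t):=\exp\{\int_0^t\pa_yf_\th(s)\,ds\}$.

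The first and third terms are routine. For the initial value, \autoref{ass 11}(iii)--(iv) give
\begin{equation*}
|p_\th(0)-p_{\tilde{\th}}(0)|\leq L_2\,d(\th,\tilde{\th})+L\,|\bar Y_\th(0)-\bar Y_{\tilde{\th}}(0)|,
\end{equation*}
which tends to $0$ uniformly on $\{d(\th,\tilde{\th})\leq\epsilon\}$ by \autoref{pro 2.3}(ii); multiplying by the uniformly $L^r$-bounded factor $e^{C_1T}\sup_t\G_\th(t)$ settles this contribution. For the drift exponential I would use $|e^{x}-e^{y}|\leq e^{C_1T}|x-y|$ to bound $|E_\th(t)-E_{\tilde{\th}}(t)|$ by $e^{C_1T}\int_0^T|\pa_yf_\th(s)-\pa_yf_{\tilde{\th}}(s)|\,ds$, then dominate the integrand, via \autoref{ass 1}(iii)--(iv), by $C_2\,d(\th,\tilde{\th})$ plus the state increments $|\bar X_\th-\bar X_{\tilde{\th}}|+|\dbE[\bar X_\th-\bar X_{\tilde{\th}}]|+|\bar Y_\th-\bar Y_{\tilde{\th}}|+|\bar Z_\th-\bar Z_{\tilde{\th}}|$, all of which vanish in the relevant moments by \autoref{pro 2.3}; a H\"older splitting against the $L^r$-bound on $\G_{\tilde{\th}}$ (and the boundedness of $p_{\tilde{\th}}(0)$) then finishes this term.

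The main term is $\G_\th-\G_{\tilde{\th}}$. Here I would use the exact factorization
\begin{equation*}
\frac{\G_\th(t)}{\G_{\tilde{\th}}(t)}=\cE\Big(\int_0^\cd(\pa_zf_\th-\pa_zf_{\tilde{\th}})^\top dW\Big)(t)\cd\exp\Big\{-\int_0^t\pa_zf_{\tilde{\th}}(s)^\top(\pa_zf_\th(s)-\pa_zf_{\tilde{\th}}(s))\,ds\Big\},
\end{equation*}
so that $\G_\th-\G_{\tilde{\th}}=\G_{\tilde{\th}}\big(\cE((\pa_zf_\th-\pa_zf_{\tilde{\th}})\cd W)\,e^{-\int_0^\cd\pa_zf_{\tilde{\th}}^\top(\pa_zf_\th-\pa_zf_{\tilde{\th}})}-1\big)$. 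Applying H\"older's inequality in $\O$, the factor $\G_{\tilde{\th}}$ is absorbed by the reverse H\"older inequality (exponent below $p_{\pa_zf_{\tilde{\th}}}$); the stochastic-exponential factor $\cE((\pa_zf_\th-\pa_zf_{\tilde{\th}})\cd W)$ has uniformly bounded moments again by the reverse H\"older inequality, now for the BMO martingale $(\pa_zf_{\tilde{\th}}-\pa_zf_\th)\cd W$ (whose norm is at most $2M_2$), which is exactly where the threshold $p_{(\pa_zf_{\tilde{\th}}-\pa_zf_\th)}$ enters; the scalar exponential correction has uniformly bounded exponential moments because $\int_0^T|\pa_zf_{\tilde{\th}}(s)|^2ds$ does, by the John--Nirenberg inequality; and the overall smallness comes from $\int_0^T|\pa_zf_\th-\pa_zf_{\tilde{\th}}|^2ds\ra0$, delivered by \autoref{ass 1}(iv) and the $Z$-continuity in \autoref{pro 2.3}(ii) (which is why one needs the conjugate exponent $p_{\pa_zf_\th}^{*}$ there).

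The delicate point, and the main obstacle, is the simultaneous bookkeeping of H\"older exponents: one must split $\dbE[\sup_t|\G_\th-\G_{\tilde{\th}}|^p]$ so that the reverse-H\"older exponent applied to $\G_{\tilde{\th}}$ stays below $p_{\pa_zf_{\tilde{\th}}}$, the reverse-H\"older exponent applied to $\cE((\pa_zf_\th-\pa_zf_{\tilde{\th}})\cd W)$ stays below $p_{(\pa_zf_{\tilde{\th}}-\pa_zf_\th)}$, the exponent extracting smallness from $\int_0^T|\pa_zf_\th-\pa_zf_{\tilde{\th}}|^2ds$ exceeds $p_{\pa_zf_\th}^{*}$ so that \autoref{pro 2.3}(ii) applies, and the John--Nirenberg exponent respects its critical threshold --- all at once and with constants uniform in $(\th,\tilde{\th})$. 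It is precisely to guarantee that such a choice exists that one imposes $1<p<p_{\pa_zf_\th}\wedge p_{(\pa_zf_{\tilde{\th}}-\pa_zf_\th)}$; checking its feasibility (using that the BMO norms are uniformly bounded by $M_2$, so the critical exponents are uniformly bounded away from $1$) is the technical heart of the argument.
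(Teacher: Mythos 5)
Your proposal follows the same overall strategy as the paper's proof: the closed-form representation $p_\th(t)=-\pa_y\g_\th(\bar Y_\th(0))\,e^{\int_0^t\pa_yf_\th(s)ds}\,\G_\th(t)$, the identical three-term splitting (initial value, drift exponential, stochastic exponential), and the same BMO toolkit --- Doob's inequality, the reverse H\"older inequality for both $\pa_zf_\th\cd W$ and $(\pa_zf_{\tilde{\th}}-\pa_zf_\th)\cd W$, and the John--Nirenberg inequality --- with the same exponent thresholds explaining the restriction $p<p_{\pa_zf_\th}\wedge p_{(\pa_zf_{\tilde{\th}}-\pa_zf_\th)}$. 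The only genuine divergence is in the main term $\G_\th-\G_{\tilde{\th}}$: you use the exact multiplicative factorization $\G_\th/\G_{\tilde{\th}}=\cE\big(\int_0^\cd(\pa_zf_\th-\pa_zf_{\tilde{\th}})^\top dW\big)\exp\{-\int_0^\cd\pa_zf_{\tilde{\th}}^\top(\pa_zf_\th-\pa_zf_{\tilde{\th}})ds\}$ (which is algebraically correct), whereas the paper regards $\bar X=\G_\th-\G_{\tilde{\th}}$ as the solution of a linear SDE driven by $\varphi=(\pa_zf_\th-\pa_zf_{\tilde{\th}})\G_{\tilde{\th}}$ and inverts it by applying It\^o's formula to $\bar X\Pi_\th$ with $\Pi_\th=\G_\th^{-1}$, i.e.\ variation of constants, then estimates the resulting $ds$- and $dW$-integrals separately. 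Your route trades the paper's It\^o computation for the need to control $\dbE[\sup_t|N_t-1|^r]$ where $N$ is a product of exponentials with small random exponent; this is where the smallness is actually harvested (via $|e^x-1|\leq|x|e^{|x|}$, the BDG inequality for $\int(\pa_zf_\th-\pa_zf_{\tilde{\th}})dW$, and the same H\"older/John--Nirenberg splitting), and it is the one step you should spell out in full, since as written it is only asserted. A small point in your favour: in the drift term you correctly account for the fact that $\pa_yf_\th$ and $\pa_yf_{\tilde{\th}}$ are evaluated along \emph{different} state trajectories, so that their difference is $C_2 d(\th,\tilde{\th})$ plus state increments controlled by \autoref{pro 2.3}; the paper's bound $|e^{\int_0^t\pa_yf_\th ds}-e^{\int_0^t\pa_yf_{\tilde{\th}}ds}|\leq e^{C_1T}(e^{Td(\th,\tilde{\th})}-1)$ silently ignores this dependence.
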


               \begin{proof}
                     Since $|p_\th(t)|^2>0$ for $t\in[0,T]$, applying It\^{o}'s formula to $\ln |p_\th(t)|^2$ on $[0,T]$,
                    we have
                     \begin{equation*}
                     p_\th(t)=\pm p_\th(0)\cd e^{\int_0^t\pa_yf_\th(s)ds}\cd \cE(\int_0^t\pa_zf_\th(s)^\top dW(s)).
                     \end{equation*}
                     We only show $p_\th(t)=-p_\th(0)\cd e^{\int_0^t\pa_yf_\th(s)ds}\cd \cE(\int_0^t\pa_zf_\th(s)^\top dW(s))$, i.e.,
                     \begin{equation*}
                     p_\th(t)= \pa_y\gamma_\th(\bar{Y}_\th(0))   \cd e^{\int_0^t\pa_yf_\th(s)ds}\cd \cE(\int_0^t\pa_zf_\th(s)^\top dW(s)).
                     \end{equation*}
                     Notice
                     \begin{equation*}
                      p_\th(t)-p_{\tilde{\th}}(t)=I_1(t)+I_2(t)+I_3(t),
                     \end{equation*}
                     where
                     \begin{equation*}
                     \begin{aligned}
                     I_1(t)&:=\[\pa_y\gamma_\th(\bar{Y}_\th(0))- \pa_y\gamma_{\tilde{\th}}(\bar{Y}_{\tilde{\th}}(0))\] \cd e^{\int_0^t\pa_yf_\th(s)ds}\cd \cE(\int_0^t\pa_zf_\th(s)^\top dW(s)),\\
                     I_2(t)&:=\pa_y\gamma_{\tilde{\th}}(\bar{Y}_{\tilde{\th}}(0))\cd
                     \[e^{\int_0^t\pa_yf_\th(s)ds}-e^{\int_0^t\pa_yf_{\tilde{\th}}(s)ds}\]\cd \cE(\int_0^t\pa_zf_\th(s)^\top dW(s)),\\
                      I_3(t)&:=\pa_y\gamma_{\tilde{\th}}(\bar{Y}_{\tilde{\th}}(0))\cd
                     e^{\int_0^t\pa_yf_{\tilde{\th}}(s)ds}\cd
                     \[\cE(\int_0^t\pa_zf_\th(s)^\top dW(s))-\cE(\int_0^t\pa_zf_{\tilde{\th}}(s)^\top dW(s))\] .
                     \end{aligned}
                     \end{equation*}

                     For $I_1(t)$, since
                     \begin{equation*}
                     \begin{aligned}
                     |\pa_y\gamma_\th(\bar{Y}_\th(0))- \pa_y\gamma_{\tilde{\th}}(\bar{Y}_{\tilde{\th}}(0))|
                     & \leq  |\pa_y\gamma_\th(\bar{Y}_\th(0))- \pa_y\gamma_{\tilde{\th}}(\bar{Y}_{\th}(0))|
                     +|\pa_y\gamma_{\tilde{\th}}(\bar{Y}_{\th}(0))-\pa_y\gamma_{\tilde{\th}}(\bar{Y}_{\tilde{\th}}(0))|\\
                     &\leq d(\th, \tilde{\th})+C |\bar{Y}_\th(0)-\bar{Y}_{\tilde{\th}}(0)|,
                     \end{aligned}
                     \end{equation*}
                     it yields from   Doob's inequality and reverse H\"older's inequality that
                     \begin{equation*}
                     \begin{aligned}
                      \dbE\[\sup\limits_{t\in[0,T]}|I_1(t)|^p\]
                     &\leq C_p\dbE\[\sup\limits_{t\in[0,T]}\Big\{ (d^p(\th, \tilde{\th})+C |\bar{Y}_\th(0)-\bar{Y}_{\tilde{\th}}(0)|^p)\cd e^{pTC_1}
                        \cd\cE(\int_0^t\pa_zf_\th(s)^\top dW(s))^p\Big\}  \]\\
                      &\leq C_p d^p(\th, \tilde{\th}) \dbE\[\sup\limits_{t\in[0,T]}  \cE(\int_0^t\pa_zf_\th(s)^\top dW(s))^p  \]\\
                      &\q  +C_p \Big\{\dbE\[\sup\limits_{t\in[0,T]}|\bar{Y}_\th(t)-\bar{Y}_{\tilde{\th}}(t)|^{pq'}  \]   \Big\}^{\frac{1}{q'}}
                      \cd \Big\{\dbE\[\sup\limits_{t\in[0,T]}\cE(\int_0^t\pa_zf_\th(s)^\top dW(s))^{pp'}  \]   \Big\}^{\frac{1}{p'}}\\
                       &\leq C_p d^p(\th, \tilde{\th}) \dbE\[ \cE(\int_0^T\pa_zf_\th(s)^\top dW(s))^p  \]\\
                      &\q  +C_p \Big\{\dbE[\sup\limits_{t\in[0,T]}|\bar{Y}_\th(t)-\bar{Y}_{\tilde{\th}}(t)|^{pq'}  ]   \Big\}^{\frac{1}{q'}}
                      \cd \Big\{\dbE[\cE(\int_0^T\pa_zf_\th(s)^\top dW(s))^{pp'}  ]   \Big\}^{\frac{1}{p'}}\\
                      &\leq C_p   d^p(\th, \tilde{\th})\cd K(p, \| \pa_zf_\th\cd W  \|_{BMO})\\
                      &\q +
                      \Big\{\dbE[\sup\limits_{t\in[0,T]}|\bar{Y}_\th(t)-\bar{Y}_{\tilde{\th}}(t)|^{pq'}  ]   \Big\}^{\frac{1}{q'}}
                      \cd  K(pp', \| \pa_zf_\th\cd W  \|_{BMO}),
                     \end{aligned}
                     \end{equation*}
                   where $p'=\frac{p+p_{\pa_zf_\th}}{2p} $ and $q'=\frac{p'}{p'-1}$. Clearly $pq'>p^*_{\pa_zf_\th}$, and by \autoref{pro 2.3}
                   we have, for  $1<p<p_{\pa_zf_\th}$,
                     \begin{equation*}
                      \lim_{\epsilon\ra 0}\sup_{d(\th, \tilde{\th})\leq\epsilon } \dbE\[\sup\limits_{t\in[0,T]}|I_1(t)|^p\]=0.
                      \end{equation*}

                       For $I_2(t)$, since
  \begin{equation*}
                     \[e^{\int_0^t\pa_yf_\th(s)ds}-e^{\int_0^t\pa_yf_{\tilde{\th}}(s)ds}\] \leq e^{C_1T}\cd\[e^{Td(\th,\tilde{\th})}-1\],
                     \end{equation*}
we know, for $1<p<p_{\pa_zf_\th}$,
  \begin{equation*}
  \begin{aligned}
                    \dbE\[\sup\limits_{t\in[0,T]}|I_2(t)|^p\]  &\leq Le^{C_1 T}\cd\[e^{Td(\th,\tilde{\th})}-1\]\cd \dbE\[\sup\limits_{t\in[0,T]}  \cE(\int_0^t\pa_zf_\th(s)^\top dW(s))^p  \]\\
                    & \leq Le^{C_1T}\cd\[e^{Td(\th,\tilde{\th})}-1\] \cd   K(p, \| \pa_zf_\th\cd W  \|_{BMO}).
                    \end{aligned}
                     \end{equation*}

   Finally, we prove, for
$1<p<(p_{\pa_z f_\th}\wedge p_{(\pa_z f_{\tilde{\th}}-\pa_z f_{\th})})$,
                     \begin{equation*}
                      \lim_{\epsilon\ra 0}\sup_{d(\th, \tilde{\th})\leq\epsilon } \dbE\[\sup\limits_{t\in[0,T]}|I_3(t)|^p\]=0.
                      \end{equation*}
                      In fact, due to $|\pa_y\gamma_{\tilde{\th}}(\bar{Y}_{\tilde{\th}}(0))\cd
                     e^{\int_0^t\pa_yf_{\tilde{\th}}(s)ds}|\leq C $, we only need to prove, for
    $1<p<(p_{\pa_z f_\th}\wedge p_{(\pa_z f_{\tilde{\th}}-\pa_z f_{\th})}) $,
                     \begin{equation*}
                      \lim_{\epsilon\ra 0}\sup_{d(\th, \tilde{\th})\leq\epsilon } \dbE\[\sup\limits_{t\in[0,T]}|\cE(\int_0^t\pa_zf_\th(s)^\top dW(s))-\cE(\int_0^t\pa_zf_{\tilde{\th}}(s)^\top dW(s))|^p\]=0.
                      \end{equation*}
    For convenience, set $X_\th(t)=\cE(\int_0^t\pa_zf_\th(s)^\top dW(s))$, and $X_\th$ satisfies the following SDE
    \begin{equation*}
    X_\th(t)=1+\int_0^t\pa_zf_\th(s)X_\th(s)dW(s),\ t\in[0,T].
    \end{equation*}
    Set $\bar{X}(s)=X_\th(s)-X_{\tilde{\th}}(s)$  and $\varphi(s)=(\pa_zf_\th(s)-\pa_zf_{\tilde{\th}}(s))X_{\tilde{\th}}(s)$. Then
     \begin{equation*}
      \bar{X}(t)=\int_0^t\(\pa_zf_\th(s)\bar{X}(s)+\varphi(s)\)dW(s), \q t\in[0,T].
      \end{equation*}
      Consider
      \begin{equation*}
      \left\{
      \begin{aligned}
      d\Pi_\th(t)&=(\pa_zf_\th(t))^2\Pi_\th(t)dt- \pa_zf_\th(t)\Pi_\th(t)dW(t), \q t\in[0,T],\\
      \Pi_\th(0)&=1.
      \end{aligned}
      \right.
      \end{equation*}
   By \autoref{th 6.1}, the above equation has a unique strong solution $\Pi_\th\in \cS^p([0,T];\dbR)$ for $p>1$.
     Then it follows from It\^{o}'s formula to $\bar{X}(s)\Pi_\th(s)$ on $[0,t]$  that
       \begin{equation}\label{4.2}
       \bar{X}(t)\Pi_\th(t)=\int_0^t(-\pa_zf_\th(s)\Pi_\th(s)\f(s))ds+\int_0^t\Pi_\th(s)\f(s)dW(s), \q t\in[0,T].
       \end{equation}
        In addition, consider
          \begin{equation*}
      \left\{
      \begin{aligned}
      d\pi_\th(t)&=\pa_zf_\th(t)\pi_\th(t)dW(t), \q t\in[0,T],\\
      \pi_\th(0)&=1.
      \end{aligned}
      \right.
      \end{equation*}
     By \autoref{th 6.1} again, the above equation also has a unique strong solution $\pi_\th\in \cS^p([0,T];\dbR)$ for $p>1$.
       Moreover, we know from It\^{o}'s  formula that $\Pi_\th(t)\pi_\th(t)=1,\ t\in[0,T]$. Hence,
       \begin{equation*}
       \Pi_\th(t)^{-1}=\pi_\th(t)=\cE\(\int_0^t\pa_zf_\th(r)^\top dW(r)\).
       \end{equation*}
       From H\"{o}lder inequality we can see, for
    $1<p<(p_{\pa_z f_\th}\wedge p_{(\pa_z f_{\tilde{\th}}-\pa_z f_{\th})})$,
   \begin{equation*}
   \begin{aligned}
    &\dbE\[ \sup\limits_{t\in[0,T]}|\bar{X}(t)|^p \]=\dbE\[ \sup\limits_{t\in[0,T]}|\bar{X}(t)\Pi_\th(t)\pi_\th(t)|^p \]\\
    &\leq\dbE\[\sup\limits_{t\in[0,T]}|\bar{X}(t)\Pi_\th(t)|^p\cd \sup\limits_{t\in[0,T]}|\pi_\th(t)|^p \]\\
    &\leq \Big\{\dbE\[\sup\limits_{t\in[0,T]}|\bar{X}(t)\Pi_\th(t)|^{pq'} \]\Big\}^{\frac{1}{q'}}
     \Big\{\dbE\[\sup\limits_{t\in[0,T]}|\pi_\th(t)|^{pp'} \]\Big\}^{\frac{1}{p'}},
    \end{aligned}
   \end{equation*}
   where $p'=\frac{p+(p_{\pa_z f_\th}\wedge p_{(\pa_z f_{\tilde{\th}}-\pa_z f_{\th})} )}{2p}$
   and  $q'=\frac{p'}{p'-1}$.
  Since $pp'<p_{\pa_z f_\th}$,
  it follows from Doob's inequality and reverse H\"{o}lder inequality that
  \begin{equation*}
   \dbE\[ \sup\limits_{t\in[0,T]}\pi_\th(t)^{pp'}\]\leq C \dbE\[  \pi_\th(T)^{pp'}\]\leq C K(pp', \| \pa_zf_\th\cd W  \|_{BMO}).
   \end{equation*}
Consequently, by H\"{o}lder inequality we have
   \begin{equation}\label{4.43}
   \begin{aligned}
\dbE\[ \sup\limits_{t\in[0,T]}|\bar{X}(t)|^p \]
     &\leq C \Big\{\dbE\[\sup\limits_{t\in[0,T]}|\bar{X}(t)\Pi_\th(t)|^{pq'} \]\Big\}^{\frac{1}{q'}}\\
    &\leq C\Big\{\dbE\[\sup\limits_{t\in[0,T]}|\bar{X}(t)\Pi_\th(t)|^{p\frac{p'}{p'-1}} \]\Big\}^{\frac{1}{q'}}\\
    &\leq C\Big\{\dbE\[\sup\limits_{t\in[0,T]}|\bar{X}(t)\Pi_\th(t)|^{pp'} \]\Big\}^{\frac{1}{p'}},
    \end{aligned}
   \end{equation}
   where $C$ depends on $pp'$ and $\| \pa_zf_\th\cd W  \|_{BMO}$.
  Besides, from (\ref{4.2}) we arrive at
   \begin{equation}\label{4.44}
   \begin{aligned}
   \dbE\[\sup\limits_{t\in[0,T]}|\bar{X}(t)\Pi_\th(t)|^{pp'}\]&\leq C\dbE\[\int_0^T|-\pa_zf_\th(s)\Pi_\th(s)\f(s))|^{pp'}ds\]\\
                 &\q +C \dbE\[\sup\limits_{t\in[0,T]}|\int_0^t\Pi_\th(s)\f(s)dW(s)|^{pp'}\],
   \end{aligned}
    \end{equation}
where the constant $C>0$ depends on $pp'$.

Then we estimate the $ds$-term and the $dW$-term in the above inequality one by one. As for the $ds$-term,  since
    \begin{equation}\label{4.40}
    \left\{
   \begin{aligned}
    |\pa_zf_\th(s)|&\leq C(1+|\bar{Z}_\th(s)|),\\
    |\f(s)|&\leq d(\th, \tilde{\th})\cd \cE\( \int_0^s\pa_zf_{\tilde{\th}}(r)^\top dW(r)\),\\
    \Pi_\th(s)&=\exp\Big\{ \int_0^s(-\pa_zf_\th(r))^\top dW(r)+\frac{1}{2}\int_0^s|\pa_zf_\th(r)|^2dr \Big \},
   \end{aligned}
   \right.
    \end{equation}
by H\"{o}lder inequality we have
    \begin{equation}\label{4.45}
   \begin{aligned}
    &\dbE\[\int_0^T|-\pa_zf_\th(s)\Pi_\th(s)\f(s))|^{pp'}ds\] \\
    &\leq Cd(\th, \tilde{\th})^{pp'} \dbE\bigg[\(\int_0^T
    (1+|\bar{Z}_\th(s)|)\cd \cE\( \int_0^s( \pa_zf_{\tilde{\th}}(r)-\pa_zf_{\th}(r))^\top dW(r)\)\cd\\
    &\qq\qq\qq\qq\qq\qq
    e^{ \int_0^s\pa_zf_\th(r)^\top (\pa_zf_\th(r)-\pa_zf_{\tilde{\th}}(r))dr}ds\)^{pp'}\bigg]\\
    &\leq Cd(\th, \tilde{\th})^{pp'} \dbE\[
    \sup\limits_{s\in[0,T]}\cE\( \int_0^s( \pa_zf_{\tilde{\th}}(r)-\pa_zf_{\th}(r))^\top dW(r)\)^{pp'}\cd \\
    &\qq\qq\qq\qq \qq\qq\qq\qq
    e^{C d(\th,\tilde{\th})pp'\int_0^T(1+|\bar{Z}_\th(r)|)dr}\cd
    \(\int_0^T   (1+|\bar{Z}_\th(r)|)dr\)^{pp'}\]\\
    &\leq Cd(\th, \tilde{\th})^{pp'} \bigg\{\dbE\[
    \sup\limits_{s\in[0,T]}  \cE\( \int_0^s( \pa_zf_{\tilde{\th}}(r)-\pa_zf_{\th}(r))^\top dW(r)\)^{pp'p''}\]  \bigg\}^{\frac{1}{p''}}\cd\\
    &\qq\qq\qq
     \bigg\{\dbE\[e^{C d(\th,\tilde{\th})pp'q''\int_0^T(1+|\bar{Z}_\th(r)|)dr}\cd
    \(\int_0^T   (1+|\bar{Z}_\th(r)|)dr\)^{pp'q''}\]\bigg\}^{\frac{1}{q''}},
   \end{aligned}
    \end{equation}
    where
    $p''=\frac{p'+(p_{\pa_z f_\th}\wedge p_{(\pa_z f_{\tilde{\th}}-\pa_z f_{\th})})}{2pp'}$
    and $q''=\frac{p''}{p''-1}$.
   Since  $|\pa_z f_{\tilde{\th}}(s)-\pa_z f_{\th}(s)|\leq C(1+|\bar{Z}_\th(s)|+|\bar{Z}_{\tilde{\th}}(s)|)$ we know
   $(\pa_z f_{\tilde{\th}} -\pa_z f_{\th})\cd W\in BMO$.
   Note that  $1<p'<pp'p''<p_{(\pa_z f_{\tilde{\th}}-\pa_z f_{\th})}$.  It follows from Doob's inequality and
   reverse H\"{o}lder inequality that
   \begin{equation*}
   \dbE\[
    \sup\limits_{s\in[0,T]}  \cE\( \int_0^s( \pa_zf_{\tilde{\th}}(r)-\pa_zf_{\th}(r))^\top  dW(r)\)^{pp'p''}\]
    \leq  K(pp'p'', \| (\pa_zf_{\tilde{\th}}-\pa_zf_{\th})\cd W  \|_{BMO}).
   \end{equation*}
     On the other hand, by H\"{o}lder inequality and $\bar{Z}_\th\cd W\in BMO$, we have
         \begin{equation*}
   \begin{aligned}
      & \bigg\{\dbE\[e^{C d(\th,\tilde{\th})pp'q''\int_0^T(1+|\bar{Z}_\th(r)|)dr}\cd
              \(\int_0^T   (1+|\bar{Z}_\th(r)|)dr\)^{pp'q''}\]\bigg\}^{\frac{1}{q''}}\\
      &\leq C \bigg\{\dbE\[e^{2C d(\th,\tilde{\th})pp'q''\int_0^T(1+|\bar{Z}_\th(r)|)dr}\] \bigg\}^{\frac{1}{2q''}}\cd
      \bigg\{\dbE\[
              \(\int_0^T   (1+|\bar{Z}_\th(r)|^2)dr\)^{pp'q''}\]\bigg\}^{\frac{1}{2q''}}\\
       &\leq C  \bigg\{\dbE\[e^{2C d(\th,\tilde{\th})pp'q''\int_0^T(1+|\bar{Z}_\th(r)|)dr} \]\bigg\}^{\frac{1}{2q''}}.
   \end{aligned}
    \end{equation*}
  Without loss of generality, assume $d(\th, \tilde{\th} )< 1$.  Consequently, it follows from  John-Nirenberg inquality
    that

         \begin{equation*}
         \begin{aligned}
         &\dbE\[e^{2C d(\th,\tilde{\th})pp'q''\int_0^T(1+|\bar{Z}_\th(r)|)dr} \] < \dbE\[e^{2C pp'q''\int_0^T(1+|\bar{Z}_\th(r)|)dr} \] \\
         &\leq e^{2C pp'q''(T+\frac{T}{\delta})}
         \dbE\[e^{2C pp'q''\frac{\d}{4}\int_0^T  |\bar{Z}_\th(r)|^2dr} \]\\
         &\leq e^{2C pp'q''(T+\frac{T}{\delta})} \(1- 2C pp'q''\frac{\d}{4} \|\bar{Z}_\th\cd W \|_{BMO}^{2}   \) ^{-1}\\
         &=\frac{4}{3}e^{2C pp'q''(T+\frac{T}{\delta})},
         \end{aligned}
         \end{equation*}
         where $\d=\frac{\|\bar{Z}_\th\cd W \|_{BMO}^{-2}}{2Cpp'q''}$.
        Combining the above inequalities, we have
         \begin{equation}\label{4.47}
         \dbE\[\int_0^T|-\pa_zf_\th(s)\Pi_\th(s)\f(s))|^{pp'}ds\]\leq Cd(\th, \tilde{\th})^{pp'}.
           \end{equation}

         Next, we analyse the $dW$-term. Similar to the  $ds$-term,  based on  Buckholder-Davis-Gundy inequality, (\ref{4.40}), H\"{o}lder inequality and John-Nirenberg inequality, we have
         \begin{equation}\label{4.48}
         \begin{aligned}
         &\dbE\[\sup\limits_{t\in[0,T]}|\int_0^t\Pi_\th(s)\f(s)dW(s)|^{pp'}\]\\
         &\leq  Cd(\th,\tilde{\th})^{pp'} \dbE\[
         \sup\limits_{s\in[0,T]} \cE\( \int_0^s( \pa_zf_{\tilde{\th}}(r)-\pa_zf_{\th}(r))^\top dW(r)\)^{pp'}\cd
         e^{Cpp'\int_0^T(1+|\bar{Z}_\th(r)|)d(\th,\tilde{\th})dr}\]\\
         &\leq  Cd(\th,\tilde{\th})^{pp'} \bigg\{\dbE\[
         \sup\limits_{s\in[0,T]} \cE\( \int_0^s( \pa_zf_{\tilde{\th}}(r)-\pa_zf_{\th}(r))^\top dW(r)\)^{pp'p''}\]\bigg\}^\frac{1}{p''}
         \cd\\
         &\qq\qq\qq\qq\qq\qq\qq\qq\qq\qq
         \bigg\{\dbE\[e^{Cpp'q''\int_0^T(1+|\bar{Z}_\th(r)|)d(\th,\tilde{\th})dr}\]\bigg\}^\frac{1}{q''} \\
         &\leq Cd(\th,\tilde{\th})^{pp'},
         \end{aligned}
         \end{equation}
         where $C$ depends on $\| (\pa_zf_{\tilde{\th}}-\pa_zf_{\th})\cd W  \|_{BMO}\|$ and  $\pa_zf_{\th}\cd W  \|_{BMO}$, and  $p'',q''$ are given in (\ref{4.45}).\\
         Finally, taking into account (\ref{4.43}), (\ref{4.44}), (\ref{4.47}) and (\ref{4.48}), we have
    $\dbE\[ \sup\limits_{t\in[0,T]}|\bar{X}(t)|^p \]\leq Cd(\th,\tilde{\th})^p.$
    The proof is complete.
\end{proof}

Before we prove the stochastic maximum principle, we need to give the continuity  of $q_\th,r_\th$ with respect to $\th$.
\begin{lemma}\label{le 4.2}\rm
                Under \autoref{ass 1}, we have,  for $1<p<(p_{\pa_z f_\th}\wedge p_{\pa_z f_{\tilde{\th}}-\pa_z f_{\th}})$,
                \begin{equation*}
                \begin{aligned}
                &    \lim_{\epsilon\ra 0}\sup_{d(\th, \tilde{\th})\leq\epsilon }\dbE\bigg[
                 \sup_{t\in[0,T]}|q_\th(t)-q_{\tilde{\th}}(t)|^p+\(\int_0^{T}|r_\th(t)-r_{\tilde{\th}}(t)|^2dt\)^\frac{p}{2}
                \bigg]=0.
                 \end{aligned}
                \end{equation*}
                \end{lemma}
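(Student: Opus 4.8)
The plan is to regard $(q_\th,r_\th)$ as the solution of a linear mean-field BSDE with \emph{bounded} Lipschitz coefficients and to invoke the standard $L^p$-stability estimate for such equations, thereby reducing the assertion to the continuity in $\th$ of the data, which is then supplied by \autoref{pro 2.3} and \autoref{le 4.1}. Writing $\D q=q_\th-q_{\tilde\th}$ and $\D r=r_\th-r_{\tilde\th}$ and subtracting the backward equations in (\ref{5.1}) for the indices $\th$ and $\tilde\th$, the pair $(\D q,\D r)$ solves
\begin{equation*}
\left\{
\begin{aligned}
-d\D q(t)&=\Big[(\pa_xb_\th(t))^\top\D q(t)+\sum_{i=1}^d(\pa^i_x\si_\th(t))^\top\D r^i(t)+\dbE[(\pa_{x'}b_\th(t))^\top\D q(t)]+\zeta_{\th,\tilde\th}(t)\Big]dt-\D r(t)dW(t),\\
\D q(T)&=q_\th(T)-q_{\tilde\th}(T),
\end{aligned}
\right.
\end{equation*}
where the source term gathers all differences produced by the $\th$-dependence of the coefficients,
\begin{equation*}
\begin{aligned}
\zeta_{\th,\tilde\th}(t)&=-\big(\pa_xf_\th(t)p_\th(t)-\pa_xf_{\tilde\th}(t)p_{\tilde\th}(t)\big)-\dbE\big[\pa_{x'}f_\th(t)p_\th(t)-\pa_{x'}f_{\tilde\th}(t)p_{\tilde\th}(t)\big]\\
&\q+\big((\pa_xb_\th(t))^\top-(\pa_xb_{\tilde\th}(t))^\top\big)q_{\tilde\th}(t)+\sum_{i=1}^d\big((\pa^i_x\si_\th(t))^\top-(\pa^i_x\si_{\tilde\th}(t))^\top\big)r^i_{\tilde\th}(t)\\
&\q+\dbE\big[\big((\pa_{x'}b_\th(t))^\top-(\pa_{x'}b_{\tilde\th}(t))^\top\big)q_{\tilde\th}(t)\big].
\end{aligned}
\end{equation*}
Since $\pa_xb_\th,\pa^i_x\si_\th,\pa_{x'}b_\th$ are bounded uniformly in $\th$ by \autoref{ass 1}(i), the $L^p$-estimate underlying \cite[Theorem 3.4]{Chen-Xing-Zhang-2020} yields, for $p>1$ and a constant $C$ independent of $\th,\tilde\th$,
\begin{equation*}
\dbE\bigg[\sup_{t\in[0,T]}|\D q(t)|^p+\Big(\int_0^T|\D r(t)|^2dt\Big)^\frac{p}{2}\bigg]\leq C\,\dbE\bigg[|\D q(T)|^p+\Big(\int_0^T|\zeta_{\th,\tilde\th}(t)|dt\Big)^p\bigg],
\end{equation*}
so it suffices to show the right-hand side vanishes uniformly over $d(\th,\tilde\th)\leq\epsilon$ as $\epsilon\ra0$.

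For the source term I would split every product into a ``coefficient difference times process'' part and a ``bounded coefficient times process difference'' part. In the terms carrying $q_{\tilde\th}$ or $r_{\tilde\th}$ the coefficient differences are $O(d(\th,\tilde\th))$ by \autoref{ass 1}(iv), and after H\"older's inequality one is left with the $\cS^p$- and $\cH^{2,p}$-norms of $q_{\tilde\th},r_{\tilde\th}$; these are bounded uniformly in $\tilde\th$ because the data of (\ref{5.1}) are controlled through $\dbE[\sup_t|p_\th|^p]$, which is finite and uniform by Doob's inequality and the reverse H\"older inequality (using $|p_\th(t)|\leq C\,\cE(\int_0^t\pa_zf_\th(s)^\top dW(s))$ and $\|\pa_zf_\th\cd W\|_{BMO}\leq M_2$ from \autoref{th 2.3}). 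For the genuinely delicate terms I would write
\begin{equation*}
\pa_xf_\th(t)p_\th(t)-\pa_xf_{\tilde\th}(t)p_{\tilde\th}(t)=\big(\pa_xf_\th(t)-\pa_xf_{\tilde\th}(t)\big)p_\th(t)+\pa_xf_{\tilde\th}(t)\big(p_\th(t)-p_{\tilde\th}(t)\big),
\end{equation*}
and likewise for the $\pa_{x'}f$ term: the first summand is dominated by $Cd(\th,\tilde\th)|p_\th(t)|$, hence $O(d(\th,\tilde\th))$ by the moment bound above (valid for $p<p_{\pa_zf_\th}$), while the second is dominated by $C|p_\th(t)-p_{\tilde\th}(t)|$ and sent to $0$ uniformly by \autoref{le 4.1}.

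The terminal value $\D q(T)$ is treated the same way. The piece $\pa_x\phi_\th(\bar X_\th(T))-\pa_x\phi_{\tilde\th}(\bar X_{\tilde\th}(T))$ splits into a Lipschitz-in-$\th$ part ($\leq L_2d(\th,\tilde\th)$ by \autoref{ass 11}(iv)) and a Lipschitz-in-$x$ part ($\leq L_1|\bar X_\th(T)-\bar X_{\tilde\th}(T)|$ by \autoref{ass 11}(ii)), the latter vanishing by \autoref{pro 2.3}(i). The products $(\pa_x\F_\th(\bar X_\th(T)))^\top p_\th(T)$ and the analogous $\pa_{x'}\F$ term split as before: the difference of the bounded $\F$-derivatives is $O(d(\th,\tilde\th))+O(|\bar X_\th(T)-\bar X_{\tilde\th}(T)|)$ by \autoref{ass 1}(ii),(iv) together with \autoref{pro 2.3}(i), and is paired with $p_\th(T)$ via H\"older and the reverse H\"older moment bound, while the remaining factor $p_\th(T)-p_{\tilde\th}(T)$ is controlled by \autoref{le 4.1}.

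The main obstacle I anticipate is the \emph{uniformity} in $\th$: each limit must hold uniformly over $d(\th,\tilde\th)\leq\epsilon$, and every product containing $p_\th$ or $p_\th(T)$ has finite $L^p$-moments only for $p<p_{\pa_zf_\th}$. Keeping the reverse H\"older constants $K(\cd,\|\pa_zf_\th\cd W\|_{BMO})$ bounded independently of $\th$ — which is exactly what the uniform bound $M_2$ of \autoref{th 2.3} provides — and tuning the H\"older exponents to the admissible range $1<p<p_{\pa_zf_\th}\wedge p_{(\pa_zf_{\tilde\th}-\pa_zf_\th)}$ precisely as in the proof of \autoref{le 4.1}, is the careful bookkeeping that closes the argument.
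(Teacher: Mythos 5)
Your proposal is correct and follows essentially the same route as the paper: subtract the two backward equations in (\ref{5.1}), view the difference as a linear mean-field BSDE with bounded Lipschitz coefficients and a source term collecting the $\th$-dependence, apply the $L^p$-stability estimate of Chen--Xing--Zhang, and then control the terminal value and source via the Lipschitz-in-$\th$ assumptions, \autoref{pro 2.3} and \autoref{le 4.1}. Your write-up in fact supplies more detail than the paper's (the explicit product splittings, the uniform moment bounds on $p_\th$ via reverse H\"older, and the bookkeeping of admissible exponents), all of which is consistent with what the paper leaves implicit.
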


\begin{proof}
                   Notice that
                   \begin{equation*}
                   \begin{aligned}
                   q_\th(t)-q_{\tilde{\th}}(t)&=[q_\th(T)-q_{\tilde{\th}}(T)]+
                   \int_t^T \(\pa_xb_\th(s)^\top(q_\th(s)-q_{\tilde{\th}}(s))
                   +\sum_{i=1}^d\pa_x\si^i_\th(s)^\top(r^i_\th(s)-r^i_{\tilde{\th}}(s))\\
                   &\q
                   +\dbE\[ \pa_{x'}b_\th(s)^\top(q_\th(s)-q_{\tilde{\th}}(s))\]+I_{1,\th,\tilde{\th}}(s)
                   +I_{2,\th,\tilde{\th}}(s)\)ds-\int_t^T(r_\th(s)-r_{\tilde{\th}}(s))
                   dW(s),
                   \end{aligned}
                   \end{equation*}
                   where
                   \begin{equation*}
                   \begin{aligned}
                   I_{1,\th,\tilde{\th}}(s)&=\pa_xf_{\tilde{\th}}(s)p_{\tilde{\th}}(s)-\pa_xf_\th(s)p_\th(s)
                                +\dbE[\pa_{x'}f_{\tilde{\th}}(s)p_{\tilde{\th}}(s)-\pa_{x'}f_\th(s)p_\th(s)],\\
                   I_{2,\th,\tilde{\th}}(s)&= (\pa_xb_\th(t)-\pa_xb_{\tilde{\th}}(t))^\top q_{\tilde{\th}}(t)
                                  +\sum_{i=1}^d(\pa_x\si^i_\th(t)-\pa_x\si^i_{\tilde{\th}}(t))^\top r^i_{\tilde{\th}}(t)+\dbE[(\pa_{x'}b_\th(t)-\pa_{x'}b_{\tilde{\th}}(t))^\top q_{\tilde{\th}}(t)].
                   \end{aligned}
                   \end{equation*}
                   Since $\pa_xb_\th, \pa_x\si^i_\th,\pa_{x'}b_\th$ are bounded,
                   the backward equation in (\ref{5.1}) is a mean-field BSDE with Lipschitz coefficient.  By
                   \cite[Theorem 3.3]{Chen-Xing-Zhang-2020}, it yields that for $p>1$,
                   \begin{equation*}
                   \begin{aligned}
                   &\dbE\bigg[\sup_{t\in[0,T]}|q_\th(t)-q_{\tilde{\th}}(t)|^p
                           +\(\int_0^{T}|r_\th(t)-r_{\tilde{\th}}(t)|^2dt\)^\frac{p}{2}\bigg]\\
                   &\leq C\dbE\bigg[ |q_\th(T)-q_{\tilde{\th}}(T)|^p
                         +\(\int_0^T|I_{1,\th,\tilde{\th}}(t)+I_{2,\th,\tilde{\th}}(t)|dt \)^p
                   \bigg].
                   \end{aligned}
                   \end{equation*}
                   Since $\pa_x\F_\th,\pa_{x'}\F_\th,\pa_xb_\th, \pa_x\si_\th,  \pa_{x'}b_\th,\pa_xf_\th, \pa_{x'}f_\th,\pa_x\phi_\th$ are bounded and Lipschitz continuous in
                   $\th$, it follows from \autoref{le 4.1} and \autoref{pro 2.3} that
                \begin{equation*}\label{4.4}
                \begin{aligned}
                &    \lim_{\epsilon\ra 0}\sup_{d(\th, \tilde{\th})\leq\epsilon }\dbE\bigg[
                \sup_{t\in[0,T]}|q_\th(t)-q_{\tilde{\th}}(t)|^p
                +\(\int_0^{T}|r_\th(t)-r_{\tilde{\th}}(t)|^2dt\)^\frac{p}{2}
                \bigg]=0.
                 \end{aligned}
                \end{equation*}
 \end{proof}

     Define the Hamiltonian: for $t\in[0,T],x,x'\in\dbR^n, y\in \dbR, z\in \dbR^d, p\in\dbR,q\in\dbR^n, r\in \dbR^{n\ts d}, v\in V,$
                 \begin{equation*}
                 \begin{aligned}
                 H_\th(t,x,x',y,z,v,p,q,r)&:=q^\top b_\th(t,x,x',v)+\sum_{i=1}^d(r^i)^\top \si_\th^i(t,x,v)-pf_\th(t,x,x',y,z,v)
                 \end{aligned}
                 \end{equation*}
and its partial derivative with respect to $v$:
                 \begin{equation}\label{5.4}
                  \begin{aligned}
                 \L_\th(t,\o):=\pa_vH_\th(t,\bar{X}_\th(t),   \dbE[\bar{X}_\th(t)],   \bar{Y}_\th(t),
                      \bar{Z}_\th(t),\bar{v}(t),
                    p_\th(t), q_\th(t), r_\th(t)).
                   \end{aligned}
                 \end{equation}

%
%

\begin{lemma}\label{le 7.1}\rm
             Under \autoref{ass 1} and \autoref{ass 11},  
            the above $\L$ is an $\cF$-progressively measurable
             process, i.e., for each $t\in[0,T]$, the function $\L_\th(t,\o): \Th\ts[0,t]\ts\O\ra\dbR$  is $\sB(\Th)\ts
             \sB([0,t])\ts\cF_t$-measurable.
             \end{lemma}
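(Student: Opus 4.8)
The plan is to exploit the fact that, by definition, $\L_\th(t)$ is one fixed continuous function of the parameter $\th$ and of the input processes evaluated at time $t$. From the definition of $H_\th$ we may write
\begin{equation*}
\L_\th(t)=q_\th(t)^\top\pa_vb_\th(t)+\sum_{i=1}^d r^i_\th(t)^\top\pa_v\si^i_\th(t)-p_\th(t)\,\pa_vf_\th(t),
\end{equation*}
where $\pa_vb_\th(t),\pa_v\si^i_\th(t),\pa_vf_\th(t)$ abbreviate the $v$-derivatives evaluated at $(\bar X_\th(t),\dbE[\bar X_\th(t)],\bar v(t))$, at $(\bar X_\th(t),\bar v(t))$, and at $(\bar X_\th(t),\dbE[\bar X_\th(t)],\bar Y_\th(t),\bar Z_\th(t),\bar v(t))$, respectively. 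By \autoref{ass 1} these derivative maps are Lipschitz in $\th$ (uniformly) and continuous in the state variables, hence jointly continuous in $(\th,\text{state})$. Therefore it suffices to produce, for each of $\bar X_\th,\bar Y_\th,\bar Z_\th,p_\th,q_\th,r_\th$, a version that is jointly measurable in $(\th,t,\o)$ and progressively measurable in $(t,\o)$; the deterministic map $\th\mapsto\dbE[\bar X_\th(\cd)]$ is continuous by \autoref{pro 2.3} and hence $\sB(\Th)\ts\sB([0,T])$-measurable, and $\bar v$ does not depend on $\th$.

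Since $\Th$ is a locally compact Polish space it is separable, so fix a countable dense set $\{\th_n\}_{n\in\dbN}$. For each $m\in\dbN$ build a Borel partition $\{B^m_n\}_n$ of $\Th$ by assigning $\th$ to the first index $n$ with $d(\th,\th_n)\le 1/m$, and for a generic input process $\Phi_\th$ set $\Phi^m_\th:=\sum_n\mathbf{1}_{B^m_n}(\th)\,\Phi_{\th_n}$. Each $\Phi^m$ is a countable combination of the progressively measurable $\Phi_{\th_n}$ weighted by Borel indicators in $\th$, hence, restricted to $[0,t]$, is $\sB(\Th)\ts\sB([0,t])\ts\cF_t$-measurable. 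The continuity statements \autoref{pro 2.3}, \autoref{le 4.1}, \autoref{le 4.2} all have the uniform form $\lim_{\e\to0}\sup_{d(\th,\tilde\th)\le\e}\|\Phi_\th-\Phi_{\tilde\th}\|=0$ in the relevant norm ($\cS^p$ for $\bar X,\bar Y,p,q$, and $\cH^{2,p}$ for $\bar Z,r$), so by the partition property $\sup_\th\|\Phi^m_\th-\Phi_\th\|\to0$. Choosing a subsequence $m_k$ with $\sup_\th\|\Phi^{m_k}_\th-\Phi_\th\|\le 2^{-k}$, the increments satisfy $\sup_\th\|\Phi^{m_k}_\th-\Phi^{m_{k+1}}_\th\|\le 2^{-k+1}$, so for every fixed $\th$ the series $\sum_k|\Phi^{m_k}_\th-\Phi^{m_{k+1}}_\th|$ is absolutely summable (almost surely for $\bar X,\bar Y,p,q$, and in $L^2(dt)$ almost surely, hence $dt\ts d\dbP$-a.e., for $\bar Z,r$). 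Defining $\wt\Phi_\th:=\lim_k\Phi^{m_k}_\th$ on the jointly measurable set where the limit exists and $\wt\Phi_\th:=0$ otherwise yields a jointly measurable, progressively measurable version with $\wt\Phi_\th=\Phi_\th$ (a.s., resp.\ $dt\ts d\dbP$-a.e.) for each fixed $\th$.

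Substituting the versions $\wt{\bar X}_\th,\wt{\bar Y}_\th,\wt{\bar Z}_\th,\wt p_\th,\wt q_\th,\wt r_\th$ into the displayed formula and invoking the joint continuity of $\pa_vb_\th,\pa_v\si_\th,\pa_vf_\th$ from \autoref{ass 1}, the composition is $\sB(\Th)\ts\sB([0,t])\ts\cF_t$-measurable for every $t$; and since for each fixed $\th$ these versions coincide with the original processes, the value of $\L_\th(t,\o)$ is unchanged for each $\th$, which is exactly the asserted progressive measurability. I expect the main obstacle to be the treatment of $\bar Z_\th$ and $r_\th$: unlike $\bar X_\th,\bar Y_\th,p_\th,q_\th$, which have continuous paths and sup-norm continuity in $\th$, these are defined only up to $dt\ts d\dbP$-null sets and are continuous in $\th$ merely in the integrated $\cH^{2,p}$ norm, so the subsequence-and-limit construction for them must be carried out in $\cH^{2,p}$, and one must verify that the limiting object is genuinely progressively measurable and is a legitimate $dt\ts d\dbP$-a.e.\ version of the original $Z$ and $r$.
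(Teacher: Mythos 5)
Your proof is correct, and it rests on the same underlying idea as the paper's---discretize $\th$ and exploit the uniform continuity in $\th$ of all the ingredients---but the execution is genuinely different. The paper works directly on $\L_\th$: it uses tightness of $\bar Q$ to select compact sets $C^M$ with $\bar Q(\th\notin C^M)\le 1/M$, covers each by finitely many balls, forms a partition of unity $h_l$, sets $\L^M_\th=\sum_l \L_{\th_l^*}h_l(\th)\mathbf{1}_{\{\th\in C^M\}}$, and shows $\int_\Th\dbE\big[\int_0^T|\L^M_\th(t)-\L_\th(t)|dt\big]\bar Q(d\th)\to0$ via \autoref{le 4.1}, \autoref{le 4.2} and the Lipschitz-in-$\th$ bounds on $\pa_v b_\th,\pa_v\si_\th,\pa_v f_\th$, so that joint measurability follows from an a.e.\ convergent subsequence. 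You instead discretize at the level of the six input processes, using only separability of $\Th$ to build a Borel partition, obtain convergence uniformly in $\th$ in $\cS^p$ (resp.\ $\cH^{2,p}$), extract a summable-increments subsequence to define everywhere-convergent jointly measurable versions, and then compose with the jointly continuous derivative maps. Your route buys two things: it never invokes the measure $\bar Q$, and it yields a progressively measurable version agreeing with $\L_\th$ for \emph{every} $\th$ (up to a $dt\ts d\dbP$-null set for each fixed $\th$), whereas the paper's argument as written only controls a $\bar Q$-integral and hence identifies $\L$ only $\bar Q\otimes dt\otimes d\dbP$-a.e.; both conclusions suffice for the Fubini step in \autoref{th 5.1}. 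The caveat you flag about $\bar Z_\th$ and $r_\th$ being defined only up to $dt\ts d\dbP$-null sets is real but is handled by your subsequence construction, and it is present (silently) in the paper's proof as well, since $\L_\th$ itself is only defined $dt\ts d\dbP$-a.e.\ for each fixed $\th$.
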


             \begin{proof}  Since $\Th$ is a Polish space, for each $M>1$, there exists  a compact subset $C^M\subset \Th$ such that
             $\bar Q(\th\notin C^M)\leq \frac{1}{M}$. Then  we can find a subsequence of open neighborhoods $\(B(\th_l, \frac{1}{2M})\)_{l=1}^{L_M}$ such that $C^M\subset \bigcup_{l=1}^{L_M}\(B(\th_l, \frac{1}{2M})\)$.
             By the  locally compact property of $\Th$ and using partitions of unity, there exists a sequence of continuous functions
             $h_l:\Th\ra \dbR$ with values in $[0,1]$ such that
             \begin{equation*}
              h_l(\th)=0,\ \text{if}\ \th\notin B\(\th_l, \frac{1}{2M}\),\  l=1,
              \cds, L_M,\ \text{and}\ \sum_{l=1}^{L_M}h_l(\th)=1,\ \text{if}\ \th\in C^M.
             \end{equation*}
             We choose some $\th_l^*$ satisfying  $h_l(\th_l^*)>0$  and define
             \begin{equation*}
               \L^M_\th(t):=\sum_{l=1}^{L_M}\L_{\th_l^*}(t)h_l(\th)\mathbf{1}_{\{\th\in C^M\}}.
             \end{equation*}
             Notice that
               $  \dbE\[ \int_0^T |\L_\th(t)|dt\]\leq L.$  It yields that
             \begin{equation*}
             \begin{aligned}
             &\int_\Th\dbE \[\int_0^T\Big|\L_\th^M(t)-\L_\th(t)\Big|dt\]\bar Q(d\th)\\
             &\leq \int_\Th \sum_{l=1}^{L_M}\dbE\bigg[\int_0^T |\L_\th^M(t)-\L_\th(t)|dt\bigg]h_l(\th) \mathbf{1}_{\{\th\in C^M\}}
                  +\dbE\bigg[\int_0^T|\L_\th(t)|dt\bigg]h_l(\th) \mathbf{1}_{\{\th\notin C^M\}} \bar Q(d\th)\\
             & \leq  \sup_{d(\th,\tilde{\th})\leq \frac{1}{M}}\dbE\bigg[\int_0^T |\L_{\tilde{\th}}(t)-\L_\th(t)|dt\bigg]
             +L\bar Q(\th\notin C^M) \\
             & \leq  \sup_{d(\th,\tilde{\th})\leq \frac{1}{M}}\dbE\bigg[\int_0^T |\L_{\tilde{\th}}(t)-\L_\th(t)|dt\bigg] +\frac{L}{M}.
             \end{aligned}
             \end{equation*}
             Here we use the fact  that  $h_l(\th)=0$ whenever $d(\th,\tilde{\th})\geq \frac{1}{2N}$.

             \ms

             Then we prove
            $
              \lim\limits_{M\ra\i}  \sup\limits_{d(\th,\tilde{\th})\leq \frac{1}{M}}\dbE[
              \int_0^T |\L_{\tilde{\th}}(t)-\L_\th(t)|dt]=0.
              $
             Recall that
             \begin{equation*}
             \begin{aligned}
             \L_\th(t,\o) & =\pa_vb_\th(t,\bar {X}_\th(t),\dbE[\bar {X}_\th(t)], \bar v(t))q_\th(t)
             +\sum_{i=1}^d\pa_v\si^i_\th(t,\bar {X}_\th(t),  \bar v(t))r^i_\th(t)\\
                           &\q -\pa_vf_\th(t,\bar {X}_\th(t),\dbE[\bar {X}_\th(t)],\bar {Y}_\th(t), \bar {Z}_\th(t), \bar {v}(t) )p_\th(t).
             \end{aligned}
             \end{equation*}
              Since $\pa_vb_\th, \pa_v\si^i_\th,\pa_vf_\th$ are bounded and Lipschitz continuous in $\th$, the desired result comes from \autoref{le 4.1} and \autoref{le 4.2}.

              \end{proof}

          Now we are ready to get the necessary condition of the optimal control.
                  \begin{theorem}\label{th 5.1}\rm (Stochastic Maximum Principle)
                  Let $\bar{v}(\cd)$ be an optimal control to the problem (\ref{2.10}), 
                 $(\bar{X}_\th(\cd),\bar{Y}_\th(\cd), \bar{Z}_\th(\cd) )$ be  the solution to the state equation (\ref{1.1}) with $\bar v(\cd)$,
                   and $(p_\th(\cd), q_\th(\cd), r_\th(\cd))$ be the solution to the adjoint equation (\ref{5.1}) with $\bar v(\cd)$. Under \autoref{ass 1} and \autoref{ass 11}, there exists a
                  probability measure $\bar{Q}\in\cQ^{\bar{v}}$ such that, for $v\in V$, $dt\ts d\dbP$-a.e.,
                  \begin{equation*}
                  \begin{aligned}
                  &  \int_\Th\langle\pa_vH_\th(t,\bar{X}_\th(t), \dbE[\bar{X}_\th(t)], \bar{Y}_\th(t),
                      \bar{Z}_\th(t), \bar{v}(t), p_\th(t), q_\th(t), r_\th(t)), v-\bar{v}(t)\rangle\bar{Q}(d\th)\geq0.
                  \end{aligned}
                  \end{equation*}
                  \end{theorem}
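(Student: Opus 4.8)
The plan is to transform the integrated variational inequality of \autoref{th 3.4} into the claimed pointwise Hamiltonian condition by a duality computation against the adjoint system (\ref{5.1}), and then to pass to a pointwise statement by a perturbation (localisation) argument in $(t,\o)$. First, for fixed $\th\in\Th$, I would apply It\^o's formula to the two scalar products $p_\th(t)Y^1_\th(t)$ and $\langle q_\th(t),X^1_\th(t)\rangle$ on $[0,T]$, reading off the dynamics of $(X^1_\th,Y^1_\th,Z^1_\th)$ from (\ref{3.1}) and (\ref{3.5}) and those of $(p_\th,q_\th,r_\th)$ from (\ref{5.1}). In $d(p_\th Y^1_\th)$ the drift terms carrying $\pa_yf_\th$ cancel, and the term $p_\th\pa_zf_\th Z^1_\th$ is cancelled by the It\^o cross-variation $d\langle p_\th,Y^1_\th\rangle$; in $d\langle q_\th,X^1_\th\rangle$ the terms with $\pa_xb_\th$ cancel, while those with $\pa_x\si_\th$ are cancelled by the cross-variation with $r_\th$.

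After taking expectations in each identity, the mean-field terms involving $\pa_{x'}b_\th$ and $\pa_{x'}f_\th$ cancel by Fubini (using $\dbE[\langle X^1_\th,\dbE[(\pa_{x'}b_\th)^\top q_\th]\rangle]=\dbE[\langle q_\th,\pa_{x'}b_\th\dbE[X^1_\th]\rangle]$ and its analogue for $\pa_{x'}f_\th$). Adding the two resulting identities and substituting the terminal data $Y^1_\th(T)=\pa_x\F_\th(T)X^1_\th(T)+\pa_{x'}\F_\th(T)\dbE[X^1_\th(T)]$ together with the expression for $q_\th(T)$ in (\ref{5.1}), the boundary contributions coming from $\pa_x\F_\th,\pa_{x'}\F_\th$ cancel; using $X^1_\th(0)=0$ and the deterministic value $p_\th(0)=-\pa_y\g_\th(\bar Y_\th(0))$ one is left, for each $\th$, with
\begin{equation*}
\dbE\big[\pa_x\phi_\th(\bar X_\th(T))X^1_\th(T)\big]+\pa_y\g_\th(\bar Y_\th(0))Y^1_\th(0)
=\dbE\int_0^T\langle\L_\th(t),v(t)-\bar v(t)\rangle\,dt,
\end{equation*}
where $\L_\th=\pa_vH_\th$ is exactly the process in (\ref{5.4}), since $\pa_vH_\th=(\pa_vb_\th)^\top q_\th+\sum_i(\pa_v\si^i_\th)^\top r^i_\th-p_\th\pa_vf_\th$. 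Integrating this identity against the measure $\bar Q\in\cQ^{\bar v}$ supplied by \autoref{th 3.4} and invoking that inequality gives
\begin{equation*}
\int_\Th\dbE\int_0^T\langle\L_\th(t),v(t)-\bar v(t)\rangle\,dt\,\bar Q(d\th)\geq0,\qquad \forall\,v(\cd)\in\cV_{ad}.
\end{equation*}

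To obtain the pointwise conclusion I would fix $v\in V$ and set $G(t,\o):=\int_\Th\langle\L_\th(t),v-\bar v(t)\rangle\,\bar Q(d\th)$, which is well defined and $\cF$-progressively measurable by \autoref{le 7.1} and Fubini (the bound $\dbE\int_0^T|\L_\th(t)|dt\leq L$ ensures integrability). Taking the perturbation $v^B:=\bar v+(v-\bar v)\mathbf 1_B$ with $B:=\{(t,\o):G(t,\o)<0\}$ (this $v^B$ is $V$-valued by convexity of $V$ and is dominated by $\max(|v|,|\bar v|)$, so $v^B\in\cV_{ad}$), the displayed inequality gives $\dbE\int_0^T\mathbf 1_B(t)G(t)\,dt\geq0$, forcing $(dt\ts d\dbP)(B)=0$. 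Hence $G\geq0$ $dt\ts d\dbP$-a.e., which is the asserted inequality with the same $\bar Q$.

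The step I expect to be the main obstacle is the rigorous justification that the stochastic integrals produced by It\^o's formula have vanishing expectation. Because $f_\th$ is quadratic in $z$, the coefficient $\pa_zf_\th$ is only bounded by $C(1+|\bar Z_\th|)$ and $p_\th$ is the Dol\'eans--Dade exponential of the BMO martingale $\pa_zf_\th\cd W$, so the $dW$-integrands (e.g.\ $p_\th Z^1_\th$ and $\langle X^1_\th,r_\th\rangle$) are not obviously in $\cH^{2,1}$. I would handle this by localising with stopping times $\t_n\uparrow T$ of the type used in \autoref{th 6.1}, taking expectations on $[0,\t_n]$ where the integrands are integrable, and then letting $n\to\i$; the passage to the limit would be controlled by the $L^p$-estimates (\ref{2.1-0}), (\ref{3.2}), (\ref{3.6}), H\"older's inequality, and the reverse H\"older and John--Nirenberg bounds for $\G_\th$ and $p_\th$ already exploited in \autoref{le 4.1}.
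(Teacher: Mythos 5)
Your proposal follows the same route as the paper's proof: It\^o's formula applied to the pairing $(X^1_\th)^\top q_\th+Y^1_\th p_\th$ yields the duality identity $\dbE[\pa_x\phi_\th(\bar X_\th(T))^\top X^1_\th(T)+\pa_y\g_\th(\bar Y_\th(0))Y^1_\th(0)]=\dbE\int_0^T\langle\pa_vH_\th,v(t)-\bar v(t)\rangle dt$, \autoref{th 3.4} supplies $\bar Q$, and \autoref{le 7.1} plus Fubini convert the integrated inequality into the pointwise one. Your extra care about the vanishing expectation of the stochastic integrals (localisation plus the $L^p$ and BMO estimates) and the explicit $\mathbf 1_B$-perturbation used to pass to the $dt\ts d\dbP$-a.e.\ statement merely fill in two steps the paper leaves implicit; both are sound.
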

                  \begin{proof} Applying It\^{o}'s formula to   $(X^1_\th(\cd))^\top q_\th(\cd)+ Y^1_\th(\cd) p_\th(\cd)$, we have
                  \begin{equation*}
                  \begin{aligned}
                   &  \dbE\[   (\pa_x\phi_\th(\bar X_\th(T)))^\top X_\th^1(T)
                        +   \pa_y\g_\th (\bar Y_\th(0))Y_\th^1(0)   \] \\
                   & = \dbE\[\int_0^T  \langle (\pa_vb_\th(t))^\top q_\th(t) +\sum_{i=1}^d(\pa_v\si^i_\th(t))^\top r^i_\th(t)
                   -\pa_vf_\th(t)p_\th(t),    v(t)-\bar v(t) \rangle  dt\] \\
                   & = \dbE\[\int_0^T  \langle\pa_vH_\th(t,\bar{X}_\th(t), \dbE[\bar{X}_\th(t)],    \bar{Y}_\th(t),
                      \bar{Z}_\th(t),\bar{v}(t),
                    p_\th(t), q_\th(t), r_\th(t)),   v(t)-\bar v(t)\rangle dt\].
                   \end{aligned}
                  \end{equation*}
                 Thanks to the above equality and \autoref{th 3.4}, there exists some probability $ \bar Q\in \cQ^{\bar v}$ such that,  for all $v(\cd)\in \cV_{ad}$,
                 \begin{equation*}
                 \begin{aligned}
                 & \int_\Th\dbE\[\int_0^T  \langle\pa_vH_\th(t,\bar{X}_\th(t),  \dbE[\bar{X}_\th(t)],    \bar{Y}_\th(t),
                      \bar{Z}_\th(t),\bar{v}(t), p_\th(t), q_\th(t), r_\th(t)),   v(t)-\bar v(t)\rangle dt\]\bar Q(d\th)\geq0.
                  \end{aligned}
                 \end{equation*}
                Thanks to \autoref{le 7.1},
                  the process $ \L_\th(\cd)$ is $\cF$-progressively measurable.
                  Then it follows from  Fubini's Theorem that
                   \begin{equation*}
                 \begin{aligned}
                 &\dbE\[\int_0^T \int_\Th \langle\pa_vH_\th(t,\bar{X}_\th(t),  \dbE[\bar{X}_\th(t)],    \bar{Y}_\th(t),
                      \bar{Z}_\th(t),\bar{v}(t)  , p_\th(t), q_\th(t), r_\th(t)),   v(t)-\bar v(t)\rangle \bar Q(d\th)dt\]\geq0,
                  \end{aligned}
                 \end{equation*}
                 which implies
                for $v\in V$, $dt\ts d\dbP$-a.e.,
                     \begin{equation*}
                     \begin{aligned}
                    &\int_\Th \langle\pa_vH_\th(t,\bar{X}_\th(t), \dbE[\bar{X}_\th(t)], \bar{Y}_\th(t),
                      \bar{Z}_\th(t), \bar{v}(t),
                   p_\th(t), q_\th(t), r_\th(t)),   v(t)-\bar v(t)\rangle \bar Q(d\th)\geq0.
                    \end{aligned}
                      \end{equation*}
                  \end{proof}

                   Then we turn to the sufficient condition of the optimal control.
                   \begin{theorem}\label{th 5.2}\rm
                  Under the assumptions and the settings in Theorem \ref{th 5.1}, if
                   $H_\th$ is convex in $(x,x',y,z,v)$ and continuous in $t$, $\phi_\th$ is convex in $x$, $\g_\th$ is
                   convex in $y$ and
                    $\F_\th$ is concave in $(x,x')$,
                   and there exists an admissible control $\bar{v}(\cd)$ and a
                  probability measure $\bar Q\in \cQ^{\bar v}$ such that for $v\in V$, $dt\ts d\dbP$-a.e.,
                    \begin{equation*}
                     \begin{aligned}
                    &\int_\Th \langle\pa_vH_\th(t,\bar{X}_\th(t),  \dbE[\bar{X}_\th(t)],   \bar{Y}_\th(t),
                      \bar{Z}_\th(t),\bar{v}(t),
                   p_\th(t), q_\th(t), r_\th(t)),   v-\bar v(t)\rangle \bar Q(d\th) \geq0,
                    \end{aligned}
                      \end{equation*}
                  $\bar v(\cd)$ is an optimal control.
                   \end{theorem}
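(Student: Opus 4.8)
The plan is to show that $J(v(\cd))\ge J(\bar v(\cd))$ for every $v(\cd)\in\cV_{ad}$, which is exactly the optimality of $\bar v(\cd)$. First I would exploit that $\bar Q\in\cQ^{\bar v}\subset\cQ$: by the definition of $\cQ^{\bar v}$ one has $J(\bar v(\cd))=\int_\Th\dbE[\phi_\th(\bar X_\th(T))+\g_\th(\bar Y_\th(0))]\bar Q(d\th)$, while $J(v(\cd))\ge\int_\Th\dbE[\phi_\th(X^v_\th(T))+\g_\th(Y^v_\th(0))]\bar Q(d\th)$ since $\bar Q$ is merely one competitor in the supremum defining $J(v(\cd))$. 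Hence it suffices to prove $\int_\Th \mathbb I_\th\,\bar Q(d\th)\ge0$, where $\mathbb I_\th:=\dbE[\phi_\th(X^v_\th(T))-\phi_\th(\bar X_\th(T))+\g_\th(Y^v_\th(0))-\g_\th(\bar Y_\th(0))]$. Writing $\D X_\th:=X^v_\th-\bar X_\th$, $\D Y_\th:=Y^v_\th-\bar Y_\th$, $\D Z_\th:=Z^v_\th-\bar Z_\th$ and using convexity of $\phi_\th$ in $x$ and of $\g_\th$ in $y$, together with $\pa_y\g_\th(\bar Y_\th(0))=-p_\th(0)$ from (\ref{5.1}), the integrand is bounded below: $\mathbb I_\th\ge \dbE[\pa_x\phi_\th(\bar X_\th(T))^\top\D X_\th(T)]-p_\th(0)\D Y_\th(0)$.

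Next I would convert this first-order expression into Hamiltonian language by applying It\^o's formula to $q_\th(\cd)^\top\D X_\th(\cd)+p_\th(\cd)\D Y_\th(\cd)$ on $[0,T]$. Rewriting (\ref{5.1}) along the $\bar v$-trajectory as $-dq_\th=(\pa_xH_\th+\dbE[\pa_{x'}H_\th])dt-r_\th dW$ and $dp_\th=-\pa_yH_\th\,dt-\pa_zH_\th\,dW$, and using $\D X_\th(0)=0$, the drift collapses to
\begin{equation*}
H_\th^v(t)-\bar H_\th(t)-\langle\pa_xH_\th(t),\D X_\th(t)\rangle-\langle\pa_{x'}H_\th(t),\dbE[\D X_\th(t)]\rangle-\pa_yH_\th(t)\D Y_\th(t)-\langle\pa_zH_\th(t),\D Z_\th(t)\rangle,
\end{equation*}
where $H^v_\th(t)$ and $\bar H_\th(t)$ denote $H_\th$ evaluated at the $v$- and $\bar v$-trajectories with the common $(p_\th,q_\th,r_\th)$, after using the mean-field identity $\dbE[\langle\dbE[\pa_{x'}H_\th],\D X_\th\rangle]=\dbE[\langle\pa_{x'}H_\th,\dbE[\D X_\th]\rangle]$ under $\dbE\int_0^T$. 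By convexity of $H_\th$ in $(x,x',y,z,v)$ this drift is $\ge\langle\pa_vH_\th(t),v(t)-\bar v(t)\rangle=\langle\L_\th(t),v(t)-\bar v(t)\rangle$. For the terminal data I would substitute $q_\th(T)$ from (\ref{5.1}) and use the mean-field identity once more, obtaining
\begin{equation*}
\dbE[\pa_x\phi_\th(\bar X_\th(T))^\top\D X_\th(T)]-p_\th(0)\D Y_\th(0)=\dbE\int_0^T(\text{drift})\,dt-\dbE\big[p_\th(T)\big(\D Y_\th(T)-\pa_x\F_\th\,\D X_\th(T)-\pa_{x'}\F_\th\,\dbE[\D X_\th(T)]\big)\big].
\end{equation*}
Here $\D Y_\th(T)=\F_\th(X^v_\th(T),\dbE[X^v_\th(T)])-\F_\th(\bar X_\th(T),\dbE[\bar X_\th(T)])$, so by concavity of $\F_\th$ in $(x,x')$ the bracket is nonpositive; together with the sign of $p_\th(T)$ read off from the explicit representation in \autoref{le 4.1}, the subtracted term is $\le0$ and may be discarded for a lower bound. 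Combining, $\dbE[\pa_x\phi_\th(\bar X_\th(T))^\top\D X_\th(T)]-p_\th(0)\D Y_\th(0)\ge\dbE\int_0^T\langle\L_\th(t),v(t)-\bar v(t)\rangle\,dt$.

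Finally I would integrate the resulting inequality $\mathbb I_\th\ge \dbE\int_0^T\langle\L_\th(t),v(t)-\bar v(t)\rangle dt$ against $\bar Q$, interchange $\int_\Th$ and $\dbE\int_0^T$ by Fubini's theorem (legitimate because $\L_\th(\cd)$ is jointly measurable by \autoref{le 7.1}), and invoke the standing hypothesis $\int_\Th\langle\L_\th(t),v-\bar v(t)\rangle\bar Q(d\th)\ge0$, $dt\ts d\dbP$-a.e., to conclude $\int_\Th\mathbb I_\th\,\bar Q(d\th)\ge0$, whence $J(v(\cd))\ge J(\bar v(\cd))$ for all admissible $v(\cd)$.

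The main obstacle I anticipate is not the convexity bookkeeping but the rigorous justification of the It\^o step in this quadratic/unbounded setting: the martingale parts of $q_\th^\top\D X_\th+p_\th\D Y_\th$ involve $r_\th$, $\D Z_\th$ and the BMO factor $\pa_zf_\th$ carried by $p_\th$, so one must verify that these stochastic integrals are genuine (uniformly integrable) martingales with vanishing expectation. I would control them with the $\cS^p$/$\cH^{2,p}$ bounds of \autoref{th 2.3}, the well-posedness and estimates for $p_\th$ and $(q_\th,r_\th)$ from \autoref{th 6.1} and \cite{Chen-Xing-Zhang-2020}, and the reverse H\"older and John--Nirenberg inequalities, choosing conjugate exponents below $p_{\pa_zf_\th}$ so that all the pairings $p_\th\D Z_\th$, $r_\th^\top\D\si$, $\pa_zf_\th\,p_\th\,\D Y_\th$, etc., are integrable; this is the same circle of estimates already deployed in \autoref{pro 3.2} and \autoref{le 4.1}.
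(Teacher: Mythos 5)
Your proposal is correct and follows essentially the same route as the paper: duality via $\bar Q\in\cQ^{\bar v}$, convexity of $\phi_\th,\g_\th$ to reduce to first-order terms, It\^o's formula applied to $q_\th^\top\D X_\th+p_\th\,\D Y_\th$, concavity of $\F_\th$ and convexity of $H_\th$ to bound the terminal and drift contributions, and finally Fubini plus the variational hypothesis. You are in fact somewhat more careful than the paper on two points it glosses over --- the sign of $p_\th(T)$ needed to discard the $\F_\th$-concavity term, and the uniform integrability of the stochastic integrals in the quadratic/BMO setting.
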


                   \begin{proof}
                   For $\th\in\Th$ and $v(\cd)\in \cV_{ad}$, by $(X^v_\th,Y^v_\th,Z^v_\th)$ we denote the solution to the
                   equation (\ref{1.1}). Set
                      \begin{equation*}
                     \begin{aligned}
                    (\a_\th^\l,\b_\th^\l,\zeta_\th^\l)=(X_\th^\l-\bar{X}_\th,Y_\th^\l-\bar{Y}_\th,Z_\th^\l-\bar{Z}_\th).
                    \end{aligned}
                      \end{equation*}
                    Then
                    \begin{equation*}
                    \left\{
                     \begin{aligned}
                     \a_\th^\l(t) & =\int_0^t(\pa_xb_\th(s)\a_\th^\l(s)+\pa_{x'}b_\th(s)\dbE[\a_\th^\l(s)]+S_\th(s))ds\\
                                  &\ +\sum_{i=1}^d \int_0^t(\pa_x\si^i_\th(s)\a_\th^\l(s)+J^i_\th(s))dW^i(s),\\
                     \b_\th^\l(t) &=K_\th(T)+\int_t^T(\pa_xf_\th(s)^\top\a_\th^\l(s)+\pa_{x'}f_\th(s)^\top\dbE[\a_\th^\l(s)]+\pa_yf_\th(s)\b_\th^\l(s)\\
                                  &\q  +\pa_zf_\th(s)\zeta_\th^\l(s)+L_\th(s))ds-\int_t^T\zeta_\th(s)dW(s),
                    \end{aligned}
                    \right.
                      \end{equation*}
                   where
                   \begin{equation*}
                     \begin{aligned}
                    S_\th(s)&=b_\th(s, X_\th^v(s),\dbE[X_\th^v(s)],v(s))-b_\th(s)-\pa_xb_\th(s)\a_\th^\l(s)
                              -\pa_{x'}b_\th(s)\dbE[\a_\th^\l(s)],\\
                     J^i_\th(s)&=\si^i_\th(s, X_\th^v(s),v(s))-\si^i_\th(s)-\pa_x\si^i_\th(s)\a_\th^\l(s),\\
                      K_\th(T) & =\F_\th(X_\th^v(T),\dbE[X_\th^v(T)])- \F_\th(\bar{X}_\th(T),\dbE[\bar{X}_\th(T)]),\\
                      L_\th(s) & = f_\th(s, X_\th^v(s),\dbE[X_\th^v(s)],Y_\th^v(s),Z_\th^v(s), v(s))-f_\th(s)-\pa_xf_\th(s)\a_\th^\l(s)\\
                               & \q-\pa_{x'}f_\th(s)\dbE[\a_\th^\l(s)] -\pa_yf_\th(s)\b_\th^\l(s)-\pa_zf_\th(s)\zeta_\th^\l(s)
.
                    \end{aligned}
                      \end{equation*}
                   Applying Ito's formula to $\a_\th^\l(\cd)^\top q_\th(\cd) +\b_\th^\l(\cd)  p_\th(\cd) $ and then integrating from $0$ to $T$, we have

                    \begin{equation*}
                     \begin{aligned}
                     &\dbE[\pa_x\phi_\th(\bar X_\th(T))^\top\a_\th^\l(T)+\pa_y\g_\th(\bar{Y}_\th(0))\b_\th^\l(0)]\\
                     &= -\dbE\[p_\th(T)(\F_\th(X_\th^v(T),\dbE[X_\th^v(T)])- \F_\th(\bar{X}_\th(T),\dbE[\bar{X}_\th(T)]) ) \\
                     &  \q -p_\th(T)\pa_x\F(T)\a_\th^\l(T)-\dbE[p_\th(T)\pa_{x'}\F(T)]\a_\th^\l(T)\]\\
                     &\q +\dbE\[\int_0^T\(q_\th(t)S_\th(t)+\sum_{i=1}^dr^{i}_\th(t)J^i_\th(t)-p_\th(t)L_\th(t)\)dt\].
                    \end{aligned}
                      \end{equation*}
                   Since $\F_\th$ is concave in $(x,x')$ and the  Hamiltonian $H_\th$ is convex in $(x,x',y,z,v)$,  it yields that
                    \begin{equation*}
                     \begin{aligned}
                     &\dbE[\pa_x\phi_\th(\bar X_\th(T))^\top\a_\th^\l(T)+\pa_y\g_\th(\bar{Y}_\th(0))\b_\th^\l(0)]\\
                     & \geq\dbE\[\int_0^T\langle\pa_vH_\th(t,\bar{X}_\th(t), \dbE[\bar{X}_\th(t)], \bar{Y}_\th(t),
                      \bar{Z}_\th(t), \bar{v}(t), p_\th(t), q_\th(t), r_\th(t)),   v^\l(t)-\bar v(t)\rangle  dt\].
                    \end{aligned}
                      \end{equation*}
                   Finally, recall the fact that $\bar{Q}\in \cQ^{\bar{v}}$ and $\phi_\th,\g_\th $ are convex in $x,y$, respectively,
                   it follows from Sion's minimax theorem that
                   \begin{equation*}
                     \begin{aligned}
                     &J(v^\l(\cd))-J(v(\cd))\\
                     &=\int_{\Th}   \dbE\[  \phi_\th(X_\th^{\l}(T))-\phi_\th(\bar{X}_\th (T))
                     +\g_\th(Y^{\l}_\th(0))-\g_\th(\bar{Y}_\th(0))\] \bar{Q}(d\th)\\
                     &\geq\int_{\Th}  \dbE[\pa_x\phi_\th(\bar X_\th(T))^\top\a_\th^\l(T)+\pa_y\g_\th(\bar{Y}_\th(0))\b_\th^\l(0)]\bar{Q}(d\th)\\
                                & \geq\int_{\Th}  \dbE\[\int_0^T\langle\pa_vH_\th(t,\bar{X}_\th(t), \dbE[\bar{X}_\th(t)],   \bar{Y}_\th(t),
                      \bar{Z}_\th(t),\bar{v}(t),p_\th(t), q_\th(t), r_\th(t)),   v^\l(t)-\bar v(t)\rangle  dt\]  \bar{Q}(d\th)\\
                  &\geq \dbE\[\int_0^T\int_{\Th} \langle\pa_vH_\th(t,\bar{X}_\th(t),  \dbE[\bar{X}_\th(t)],   \bar{Y}_\th(t),
                      \bar{Z}_\th(t),\bar{v}(t),p_\th(t), q_\th(t), r_\th(t)),   v^\l(t)-\bar v(t)\rangle\bar{Q}(d\th)  dt\]\geq0.
                    \end{aligned}
                      \end{equation*}
                The proof is complete.

                   \end{proof}

\end{document}